\definecolor{purple}{rgb}{0.62,0.12,0.94}
\newtheorem{theorem}{Theorem}[section]
\newtheorem{lemma}[theorem]{Lemma}
\newtheorem{proposition}[theorem]{Proposition}
\newtheorem{corollary}[theorem]{Corollary}
\theoremstyle{definition}
\newtheorem{definition}{Definition}[section]
\theoremstyle{remark}
\newtheorem{remark}{Remark}[section]
\theoremstyle{remark}
\newcommand{\supp}{\operatorname{supp}}
\newcommand{\eps}{\ensuremath{\varepsilon}}
\newcommand{\R}{\ensuremath{\mathbb{R}}}
\newcommand{\del}{\partial}
\newcommand{\ol}{\overline}
\newcommand{\ul}{\underline}
\newcommand{\ra}{\rightarrow}
\newcommand{\alp}{\alpha}
\newcommand{\U}{\hspace{1pt}\mathcal U}
\newcommand{\bx}{\bar{x}}
\newcommand{\by}{\bar{y}}
\newcommand{\bz}{\bar{z}}
\newcommand{\sgn}{\operatorname{sgn}}
\def\dom{\Omega}
\def\bound{\partial\Omega}
\def\domb{\overline{\Omega}}
\def\tibeta{{\overline \beta}}
\numberwithin{equation}{section}
\newlength{\figwidth}
\title[Neumann problems]{On Neumann type problems for non-local equations set
  in a half space}
\author[Barles]{Guy Barles}
\address[Guy Barles]{
Laboratoire de Math\'ematiques et Physique Th\'eorique, CNRS UMR 6083, F\'ed\'eration Denis Poisson,
Universit\'e Fran\c{c}ois Rabelais, Parc de Grandmont,
37200 Tours, France}
\email{barles@lmpt.univ-tours.fr}
\author[Chasseigne]{Emmanuel Chasseigne}
\address[Emmanuel Chasseigne]{
Laboratoire de Math\'ematiques et Physique Th\'eorique, CNRS UMR 6083, F\'ed\'eration Denis Poisson,
Universit\'e Fran\c{c}ois Rabelais, Parc de Grandmont,
37200 Tours, France}
\email{Emmanuel.Chasseigne@lmpt.univ-tours.fr}
\author[Georgelin]{Christine Georgelin}
\address[Christine Georgelin]{
Laboratoire de Math\'ematiques et Physique Th\'eorique, CNRS UMR 6083, F\'ed\'eration Denis Poisson,
Universit\'e Fran\c{c}ois Rabelais, Parc de Grandmont,
37200 Tours, France}
\email{christine.georgelin@lmpt.univ-tours.fr}
\author[Jakobsen]{Espen R.~Jakobsen}
\address[Espen R. Jakobsen]{
    Department of Mathematical Sciences,
    Norwegian University of Science and Technology,
    7491 Trondheim, Norway}
\email{erj\@@math.ntnu.no}
\thanks{Jakobsen was supported by the Research
Council of Norway through the project ``Integro-PDEs: Numerical
methods, Analysis, and Applications to Finance''.}
 \keywords{Nonlocal equations, Neumann boundary conditions,  jumps, L\'evy measure, reflection, viscosity solutions.
 %fractional Laplacian, integro-differential equations,
}
 \subjclass[2000]{%
 35R09 (45K05), % two classifications for one subject ?  !!!!!!!!!
 35B51, %
35D40 %
 }
\begin{document}
\begin{abstract}
  We study Neumann type boundary value problems for nonlocal equations
  related to L\'evy processes. Since these equations are nonlocal,
  Neumann type problems can be obtained in many ways, depending on
  the kind of ``reflection'' we impose on the outside
  jumps. To focus on
  the new phenomenas and ideas, we consider different models
  of reflection and rather general non-symmetric L\'evy
  measures, but only simple linear equations in
  half-space domains. We derive the Neumann/reflection problems
  through a truncation procedure on the L\'evy measure, and then we
  develop a viscosity solution theory which includes
  comparison, existence, and some regularity results. For problems
  involving fractional Laplacian type operators like e.g.
  $(-\Delta)^{\alp/2}$, we prove that
  solutions of all our nonlocal Neumann problems converge as $\alp\ra
  2^-$ to the solution of a classical Neumann problem.  The
reflection  models
we consider include cases where the underlying
  L\'evy processes are reflected, projected, and/or censored
  upon exiting the domain.
\end{abstract}
\maketitle

%\tableofcontents

\section{Introduction}
\label{Sec:intro}

In the classical probabilistic approach to elliptic and parabolic
partial differential equations via Feynman-Kac formulas, it is well-known
that Neumann type boundary conditions are associated to stochastic
processes having a reflection on the boundary. We refer the reader to
the book of Freidlin~\cite{MF:Book} for an introduction and to Lions
and Sznitman \cite{LS} for general results. A key result in this
direction is roughly speaking the following: for a PDE with Neumann or
oblique boundary conditions, there is a unique underlying reflection
process, and any consistent approximation will converge to it in
the limit (see \cite{LS} and Barles \& Lions \cite{BL}). At least in the case of
normal reflections, this result is strongly connected to the study
of the Skorohod problem and relies on the
underlying stochastic processes being continuous.

The starting point of this article was to address the same question, but now
for jump diffusion processes related to partial
integro-differential equations (PIDEs in short). What is a reflection
for such processes, and is a PIDE with Neumann boundary conditions
naturally connected to a reflection process? It turns out that the
situation is more complicated in this setting,  at least the
questions have to be reformulated in a slightly different way. In this
article we address these questions through an
analytical PIDE approach where  we keep in mind the idea
of having a reflection process but without defining it precisely or
even proving its existence.

For jump processes which are discontinuous and may exit a domain
without touching its
boundary, it turns out that there are many ways to define a
``reflection'' or a ``process with a reflection''. This remains true
even if we restrict ourselves to a mechanism
which is connected to a Neumann boundary condition (see below). But
because of the way the PIDE and the process are related, defining a
reflection on the boundary will change the equation inside the
domain. This is a new nonlocal phenomenon which is not encountered in the case
of continuous processes and PDEs.
%For this reason, we are led to
%consider a rather different formulation of the problem.
\\

\noindent\textsc{PIDE with Neumann-type boundary condition --}
In order to simplify the presentation of paper and focus on the main new ideas
and phenomenas, we consider different models
  of reflections and rather general non-symmetric L\'evy
  measures, but only for problems
involving linear equations set in simple domains. The cases we will consider
already have interesting features and difficulties. To be precise, we
consider half space domains $\dom:=\big\{(x_1,\dots,x_N)=(x',x_N)\in\R^N:
x_N> 0\big\}$ and simple linear Neumann type problems that
we write as
\begin{align}
\label{E}
\begin{cases}
u(x)-I[u](x)-f(x)=0 &\text{in}\quad \dom,\\
-\frac{\del u}{\del x_N}=0&\text{in}\quad \del\dom,
\end{cases}
\end{align}
or sometimes as
$$
\begin{cases}
F(x,u,I[u])=0 &\text{in}\quad \dom,\\
-\frac{\del u}{\del x_N}=0&\text{in}\quad \del\dom,
\end{cases}
$$
where $F(x,r,l)=r-l -f(x)$ and
\begin{align*}
I[u](x)&=
\lim_{b\ra0^+}\int_{b<|z|} \left [u(x+\eta(x,z))-u(x)\right ]\ d\mu(z).
\end{align*}
We will assume that $f\in C_b(\domb)$, i.e. $f$ is bounded and continuous,
that $\mu$ is a nonnegative Radon measure satisfying
\begin{align}
\label{Levy}
 \int |z|^2\wedge 1\ d\mu(z)<\infty,
\end{align}
and that
\begin{align}
\label{n-cond}
 x+\eta(x,z)\in\domb \text{ for all }
 x\in\domb\,,\ \eta(x,z)=z\text{ if }x+z\in\domb.
\end{align}

Note that $I[u]$ is a principal value ($\mathrm{P.V.}$) integral, and
that \eqref{Levy} is the most general integrability assumption
satisfied by the L\'evy measure associated to any L\'evy
process \cite{A:Book}.
%To understad assumption \eqref{n-cond},
When
$\eta(x,z)\equiv z$, then $I[u]$ is the generator of a stochastic process
which can jump from $x \in \domb$ to $x+z$ with a certain probability,
see e.g. \cite{A:Book,CT:Book,FOT}.
% Under assumption \eqref{n-cond}, we ``reflect'' the jump whenever $x+z\notin \domb$, i.e. we
% replace $z$ by a new jump $\eta(x,z)$ in order that $x+\eta(x,z)\in
% \domb$.
% It is well-known that, by writing $I[u](x)$ with $\eta(x,z):=z$, one
% has in mind solving a PIDE related to a stochastic process which can
% jump from $x \in \domb$ to $x+z$ with a certain probability. In our
% case, if $x+z\notin \domb$, we want to ``reflect'' the jump which
% means to replace $z$ by a new jump $\eta(x,z)$ in order that
% $x+\eta(x,z)\in \domb$.
Assumption \eqref{n-cond} is a type of reflection condition
preventing the jump-process from leaving the domain: nothing happens and
$\eta(x,z)=z$ if $x+z\in\domb$, while if $x+z \notin \domb$, then a
"reflection'' is performed in order to move the particle back to a
point $P(x,z)= x+\eta(x,z)$ inside $\domb$.
Note that  we have to check at some point
that the reflection is consistent with a Neumann boundary
condition.
%  basic
% translation of what a reflection should do in order to prevent the
% underlying jump-process from leaving the domain: if the process jumps
% to an other point $x+z\in\domb$, then one has nothing to do (and
% $\eta(x,z)=z$), while, if $x+z \notin \domb$ then a "reflection'' is
% performed in order to move the particle back to a point $P(x,z)=
% x+\eta(x,z)$ inside $\domb$. We conclude this ``naive'' description of
% our framework by recalling that, of course, we have to check at some
% point that the reflection is consistent with a Neumann boundary
% condition.

The main examples of $\eta$ are the following model cases, where we
use the notation $x=(x',x_N)\in\R^{N-1}\times\R_+$,
$\eta(x,z)=(\eta(x,z)',\eta(x,z)_N)$, etc.:
\begin{align*}
&\mathrm{(a)}\quad  \eta(x,z)=\begin{cases}z \hspace{0.6cm}\  & \text{if }
 x_N+z_N\geq0\\
0 & \text{if not}
 \end{cases}&& \text{[censored]}\\
&\mathrm{(b)}\quad  \eta(x,z)=\begin{cases}z & \text{if }
 x_N+z_N\geq0\\
z\frac{x_N}{|z_N|} & \text{if not}
 \end{cases}  &&\text{[fleas on the window]}\\
&\mathrm{(c)}\quad  \eta(x,z)=\begin{cases}z & \text{if }
 x_N+z_N\geq0\\
(z',-x_N) & \text{if not}
 \end{cases} &&\text{[normal projection]}\\
&\mathrm{(d)}\quad  \eta(x,z)=\begin{cases}z & \text{if }
 x_N+z_N\geq0\\
(z',-2x_N-z_N) & \text{if not}
 \end{cases} &&\text{[mirror reflection]}
\end{align*}
for all $x\in\dom$ and $z\neq0$. The different reflections are
depicted in the figure below.
\begin{center}
    \includegraphics[scale=0.72]{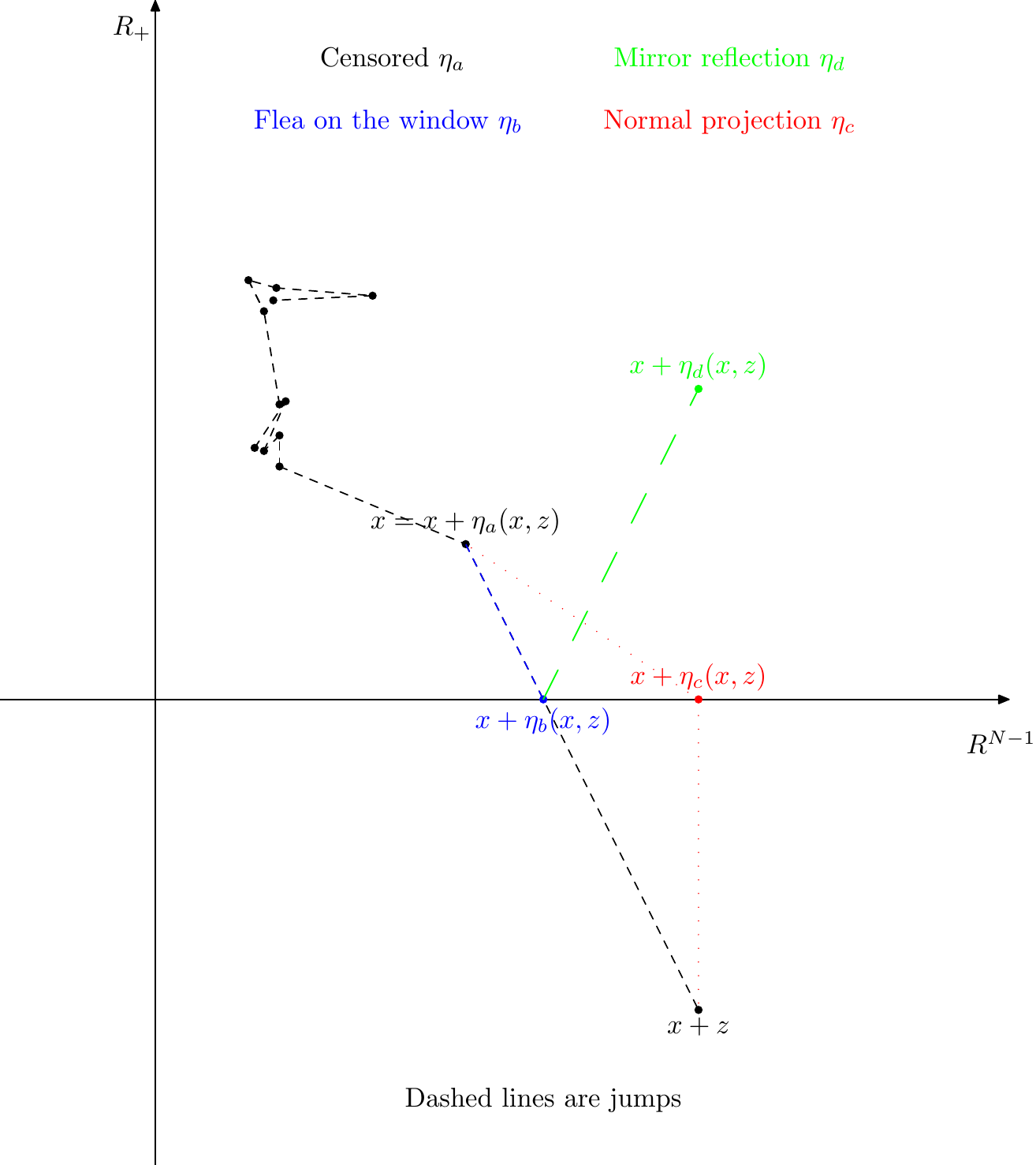}
\end{center}
We will discuss later whether the naively proposed "reflections" are
realized by a concrete Markov process, \textit{i.e.} if they correspond to the generator of such a process.
\\

\noindent\textsc{Main results --}
From an analytical (PIDE) point of view, we first have to give a sense
to problem \eqref{E} and relate it to an homogeneous  Neumann boundary
value problem. This is done in Sections~\ref{ass-def} and
\ref{der-pide}. The first part is classical: to take into
account singular L\'evy measures, we split the integral operator in
two and write the equation in a more convenient way. Here
classical arguments in viscosity solution
theory are used, see e.g. \cite{BI} and references therein. Viscosity
solution theory is also used to give a good definition of the Neumann
boundary conditions.

% A slightly different way of addressing these questions is the
% following.
If $\mu$ is a nice bounded measure, then the problem
(\ref{E}) can be solved easily without caring much about
the Neumann boundary condition. Moreover, the solutions will be uniformly
bounded by $||f||_\infty$. Intuitively (\ref{E}) carries the
information that the particles remain in $\domb$ since they can
only jump inside $\domb$. This mass conservation is an other
way to understand that we are dealing with a (homogeneous) Neumann
type of boundary condition.

When $\mu$ then is a singular measure, we can approximate it by a
sequence of bounded measures
 $(\mu_n)_n$, consider the associated (uniformly bounded!) solutions
 $(u_n)_n$, and wonder
 what the limiting problem is if $\mu_n$ converges to $\mu$ in a
 suitable sense. This is the way we choose to make sense of both the
 definition of problem (\ref{E}) and the associated notion of (viscosity)
solutions. We point out here that the ``real'' Neumann boundary
condition arises only if the measure is singular enough. In the other
cases, either the process will never reach the boundary as in the
censored case for $\alpha$-stable process with $\alpha<1$ (see e.g. \cite
{BBC}), or the equation will hold up to the boundary.

The natural next step is then to prove uniqueness results for all the
above models and equations. In this paper different types of proofs
are given depending on the singularity of the measure and
the structure of the ``reflection'' mechanism at the boundary. These
results are given in Sections \ref{sec:nonsens} -- \ref{sec:sens2}.
The first case we treat is when the jump function $\eta$ enjoys a contraction
property in the normal direction. This covers all the non-censored cases
listed above -- models (b)--(d). Had we had a contraction in all
directions, then the usual viscosity solution doubling of variables
argument would work. Here we have to modify that argument to take into
account the special role of the normal direction.

The second case we consider is the censored case (a) when the
singularity of the measure is not too strong. By this we mean
typically a stable process with L\'evy measure with density
$\frac{d\mu}{dz}\sim\frac{1}{|z|^{N+\alpha}}$ for $\alpha\in(0,1)$. We
construct an approximate subsolution which blows up at the
boundary and this allows us to derive a comparison result by a
penalization procedure. Such a construction is related to the fact
that the process does not reach the boundary in this case, see
e.g. \cite{BBC}.
%\smallskip

The last case is the censored case (a) when the singularity is
strong, \textit{i.e.} when $\alpha\in[1,2)$. This case requires much more
work because no blow-up subsolutions exist here. In fact, when
$\alpha\in[1,2)$, the censored process does reach the boundary (see
e.g. \cite{BBC}). We first prove that the Neumann boundary
condition is already encoded in the equation under the additional
assumption that the solution is $\beta$-H\"older continuous at the
boundary for some $\beta>\alpha-1$. Then we prove a comparison result
for sub/super solutions with this H\"older regularity at
the boundary. The proof is then similar to the proof in the $\alp<1$ case,
except that the special subsolutions in this case are bounded and only
penalize the boundary when the sub/super solutions are H\"older
continuous there. Finally, we construct solutions in this
class. In dimension $N=1$, we use and prove that any bounded
uniformly continuous solution is H\"older continuous provided $\mu$
satisfies some additional integrability condition. In higher
dimensions, we use and prove a similar result under additional regularity
assumptions on $f$ in the tangential directions.

Finally, in Section \ref{sec:limit} we show that  all the proposed
nonlocal models converge to the same local Neumann problem when the
L\'evy measure approches the ``local'' case $\alpha=2$. More
precisely, we consider L\'evy measures $\mu_\alpha$ with densities
$(2-\alp)\frac{g(z)}{|z|^{N+\alpha}}$, where $g$
is a nonnegative bounded function which is $C^1$ in a
neighborhood of $0$ and satisfies $g(0)>0$. In this case we prove that
whatever nonlocal Neumann model we use, the solutions $u_\alp$ converge as
$\alpha\to2$ to the unique viscosity solution of the same limit
problem, namely
\begin{equation}\begin{cases}
		-a\Delta u - b\cdot Du +u = f & \text{in } \Omega\,,\\
		\qquad\dfrac{\partial u}{\partial \mathbf{n}}=0 & \text{in } \partial\Omega\,,
\end{cases}\end{equation}
where $a:=g(0)\frac{|S^{N-1}|}{N}$ and $b:=Dg(0)\frac{|S^{N-1}|}{N}\,.$
\\
%This problem can also be thought of as the restriction to
%$\dom$ by ``reflection on the boundary'' of the following
%equation posed on the whole space $\R^N$:
%\begin{align*}
%u(x)-\int_{|z|>0}u(x+z) - u(x) - 1_{|z|<1}Du(x)z\ d\mu(z)=f(x),
%\end{align*}
%at least under some additional symmetry assumptions on $\mu$. However,
%using the compensator term cannot always be done if $\mu$ is not symmetric,
%because the integral
%$$\int_{|z|<1}\eta(x,z)\,d\mu(z)$$
%could possibly be divergent. Hence in this paper, we shall make some assumptions on
%the measure $\mu$ and the jump function $\eta$ in order to allow us to write the
%equation with this compensator term (see more below).

\noindent\textsc{Related work --} One of the first papers on the
subject was  Menaldi and Robin \cite{MR}. In that paper stochastic
differential equations for reflection problems are solved in the case
of diffusion processes with inside jumps, i.e. when there  are no
jumps outside the domain. They use the method of ``penalization on the domain''
inspired by Lions, Menaldi and Sznitman \cite{LMS}.

In model (a) (the censored case), any outside jump of the underlying
process is cancelled (censored) and the process is restarted
(resurrected) at the origin of that jump. We refer to
e.g. \cite{BBC,FOT,GQY2,NJ:Book} for more details on such processes. The
process can be constructed using the Ikeda-Nagasawa-Watanabe piecing together
procedure \cite{BBC,FOT,NJ:Book},
as Hunt processes associated to some Dirichlet forms \cite{BBC,GQY2},
or via the Feynman-Kac transform involving the killing measure
\cite{BBC}. In particular, the underlying processes in this paper are
related to the censored stable processes of Bogdan et al. \cite{BBC}
and the reflected $\alp$-stable process of Guan and Ma
\cite{GQY2}. But note that we essentially only construct the
generators and not yet the processes themselves. On the technical
side, we use viscosity solution methods, while \cite{BBC,GQY2} use the
theory of Dirichlet forms and potential theory. Our assumptions are
also different, e.g. with our arguments we treat more general measures and
we have the potential to treat non-linear problems, while \cite{BBC,GQY2} e.g.
treat much more general domains.

Let us also mention that the "natural" Neumann boundary condition for
the reflected $\alp$-stable process of Guan and Ma \cite{GQY2} is
slighly different from the one we consider here. They claim that the
boundary condition arising through the variational formulation and
Green type of formulas is
  $$\lim_{t\to 0} t^{2-\alpha} \frac{\partial u}{\partial x_N}(x+t e_N) =0.$$
  This formula allows the normal derivative $\frac{\partial
    u}{\partial x_N}$ to explode less rapidly than $|x_N|^{\alpha-2}$.

In model (b) outside jumps are stopped where the jump path hits the
boundary, and then the process is restarted there. Model (c) is close
to the approach of Lions-Sznitman in \cite{LS}, and here
outside jumps are immediately projected to the boundary along the
normal direction. This type of models will be thoroughly investigated
in the forthcoming paper \cite{BGJ} by three of the authors, but this time in
the setting of fully non-linear equations set in general domains. Note
that model (b) and (c) coincide in one dimension, i.e. when $N=1$.
Finally, in model (d), outside jumps are mirror reflected at the
boundary.  This is intuitively the natural way  of understanding
a "reflection", but the model may be problematic to handle in general
domains due to the possibility of multiple reflections. E.g. it is not
clear to us if it correspond to an underlying Markov process in a
general domain.

 To the best of our knowledge, processes with generators of the form
 (b)--(d) have not been considered before. E.g. the works of Stroock
 \cite{STR} and Taira \cite{TAI1,TAI:Book} seem
 not to cover our cases because their integrodifferential operators
 involve measures and jump vectors $\eta$ that are more regular than
 ours. Moreover in these works, it is the measure and not $\eta$ that
 prevents the process to jump outside $\Omega$.

% We now give a discussion of related work. First we mention that i
In the case of symmetric $\alp$-stable processes (a subordinated
 Brownian motion), our formulation follows after a ``reflection'' on
 the boundary. So such processes can be constructed from a Brownian
 motion by first subordinating the process and then reflecting it.
 Another possible way to construct a ``reflected'' process in this
 case would be to reflect the Brownian motion first and then
 subordinate the reflected process. A related approach is described
 e.g. in Hsu \cite{HSU} where  pure jump processes like L\'evy
 processes are connected via the Dirichlet-Neumann operator to the trace at the
 boundary of a Reflected Brownian Motion in one extra space dimension
 $\Omega\times\R_+$.
%
% of understanding the Neumann boundary condition,
%  would be to
%  especially for pure jump processes like L\'evy processes, is to
%  connect them via the Dirichlet-Neumann operator to the trace at the
%  boundary of a continuous Brownian process $W$ in $\R\times\R^N_+$,
%  see e.g. Hsu \cite{HSU} for more explanation.
%For example, take a reflected Brownian motion $W$ in $\R\times\R^N_+$  and look to the trace of the process on $\{0\}\times \R_N^+$.  Since the process $W$ spend zero amount of time on the boundary, one has  to change time  by inverting the local time of the Brownian.
%The boundary process you will get  has a  generator  of L\'evy type  with $\alpha =1$.
An analytic PIDE version of this approach is introduced by Caffarelli
and Silvestre in \cite{CC} in order to obtain Harnack inequalities for
solutions of integrodifferential equations, and then these ideas have
been used by many authors since.

  Finally we mention the more classical work of Garroni and Menaldi
  \cite{GM}, where a large class of uniformly elliptic integro
  differential equations are considered. There are two main
  differences with our work: (i) In \cite{GM} the principle part of
  all equations is a local non-degenerate 2nd order term. This allows
  the non-local terms to be controlled by local terms (the solution and
  its 1st and 2nd derivatives) via interpolation inequalities, and the
  local $W^{2,p}$ and $C^{2,\alp}$ theories can therefore be extended to this
  nonlocal case. In our paper, it is the non-local terms that are the
  principal terms, and interpolation is not available. In addition, most
  of our results can be extended to degenerate problems. (ii) In
  \cite{GM} Dirichlet type problems are considered, and the authors
  have to assume extra decay properties of the jump vector $\eta$ near
  the boundary, conditions that are not satisfied in our Neumann
  models.

%%%%%%%%%%%%%%%%%%%%%%%%%%%%%%%%%%%%%%%%%%%%%%%%%%%%%%%%%%%%%%%%%
\section{Assumptions and Definition of solutions}\label{ass-def}

In this section we state the assumptions on the problem
\eqref{E} that we will use in the rest of the paper, give the
definition of solutions for \eqref{E}, and show that the quantities in
this defintion is well-defined under our assumptions.

In this paper we let  $Du(x)$ and $D^2u(x)$ denote the gradient and
hessian matrix of a function $u$ at $x$. We also define
$P(x,z)=x+\eta(x,z)$, and then we can state our assumptions as follows:
\begin{itemize}
\item[$ (\mathrm{H}_f)\,$] $f\in C_b(\domb)$.\medskip
	\item[$(\mathrm{H}_\mu)$] The measure $\mu$ is the sum of two nonnegative Radon measures $\mu_*$ and $\mu_\#$,
          $$\mu=c\mu_*+\mu_\#,$$
 where $c$ is either $0$ or $1$, $\mu_*$ is a symmetric measure satisfying \eqref{Levy} and
	\begin{align*}
		 \int_{|z|<1}|z|\,d\mu_*=\infty, \quad\text{and}\quad\int_{\R^N}(1\wedge|z|)\,d\mu_\#<\infty.
	\end{align*}
\item[$(\text{H}_\eta^0)$] Neuman problem: $P(x,z)_N=x_N+\eta(x,z)_N>0$ for any $x,z$ and
	$$\eta(x,z)=z\quad\text{for any } x_N+z_N>0.$$
\item[$(\text{H}_\eta^1)$] At most linear growth of the jumps: there exists $c_\eta>0$ such that
	$$	|\eta(x,z)|\leq c_\eta |z|\text{ for any $x,z$}.$$
\item[$(\text{H}_\eta^2)$] Antisymmetry with respect to the $z'$-variables: for any $i=1,\dots,(N-1)$,
	$$	\eta(x,\sigma_iz)_i=-\eta(x,z)_i\text{ where }\sigma_iz=(z_1,\dots,-z_i,\dots,z_N)\,.$$
\item [$(\text{H}_\eta^3)$] Weak continuity condition:
	$$\eta(y,z)\to \eta(x,z)\quad \text {$\mu$-a.e.}\quad \text {as } y\to x.$$
\item[$(\text{H}_\eta^4)$] Continuity in the $x'$-variable:
$$	|\eta(x',s,z)'-\eta(y',s,z)'|\leq C|z||x'-y'|\quad \text{for
  any $x',y',z$ and any $s>0$}.$$
\item[$(\text{H}_\eta^5)$] Non-censored cases: Contraction in the $N$-th direction:
	$$|P(x,z)_N-P(y,z)_N|\leq|x_N-y_N|.$$
\item[$(\text{H}_\eta^6)$] Censored case: For all $z\neq0$ and $x\in\dom$,
	$$\eta(x,z)=\begin{cases}z & \text{if }
 x_N+z_N\geq0,\\
0 & \text{otherwise}.
 \end{cases} $$
\end{itemize}

\begin{remark}
Assumption $(\mathrm{H}_\mu)$ means that we can decompose $\mu$ into a
sum of a singular {\em symmetric} L\'evy measure and a not so singular
L\'evy measure. Symmetric here means that
$ \int_{\R^N} \chi(z)\,d\mu = 0$ for any odd  $\mu$-integrable function $\chi$.
This assumption covers the stable, the tempered stable, and the
larger class of self-decomposable processes in $\R^N$,
cf. chapter 1.2 in \cite{A:Book}. In all these cases the L\'evy measures satisfy
$$\frac{d\mu}{dz}=\frac{g(z)}{|z|^{N+\alp}} \quad\text{for}\quad
\alp\in(0,2),$$
and $(\mathrm{H}_\mu)$ holds with $c=0$ if $\alp\in(0,1)$,
while if $\alp\in[1,2)$ and $g$ is Lipschitz in $B_1(0)$ and bounded,
then $(\mathrm{H}_\mu)$ holds with $c=1$. In the last case we may take
$$\frac{d\mu_{*}}{dz}=\frac{h(z)}{|z|^{N+\alp}}\quad\text{and}\quad
\frac{d\mu_{\#}}{dz}=\frac{g(z)-h(z)}{|z|^{N+\alp}}\quad
\text{for}\quad h(z):=\min_{|y|=|z|}g(y),$$
and note that $h$ is symmetric and $g-h$ is nonnegative. More
generally, we can consider
measures where $\frac{d\mu}{dz}=g(z)\frac{d\mu_*}{dz}$
and $\mu^*$ is symmetric.
%provided $g$ is bounded and Lipschitz at $z=0$ and $\mu_*$ is a symmetric
%L\'evy measure.
% The CGMY is an
% example of the 2nd type of
% process that arises in Finance (cf. e.g. chapter 4 in
% \cite{CT:Book}). [It also holds for $g(z)=e^{-M(z/|z|)|z|}$ for
% $M\geq0$ smooth and bounded] [other interesting tempered stable
% processes are VG ($\alp=0$) and NIG ($\alp=1$)].
\end{remark}

\begin{remark}
%\added[ERJ]{Remark updated in view of new assumption $H^4$}
The cases $\mathrm{(a)}$, $\mathrm{(b)}$, $\mathrm{(c)}$, and $\mathrm{(d)}$
  mentioned in the introduction satisfy assumptions
  $(\text{H}_\eta^i)$ for $i=0,1,2,3,4$, where in fact $(\text{H}_\eta^4)$
  holds with $C=0$. Assumption $(\text{H}_\eta^5)$ holds
  except in case $\mathrm{(a)}$, and case $\mathrm{(a)}$ is
  equivalent to $(\text{H}_\eta^6)$. Note  that $\eta$
  is continuous in $x$
  for $z\neq0$ in  $\mathrm{(b)}$, $\mathrm{(c)}$, and $\mathrm{(d)}$, while
  in $\mathrm{(a)}$, $\eta$ is continuous except on the codimension
  $1$ hypersurface $\{z_N=x_N\}$.
\end{remark}

Now we will define generalized solutions in the viscosity sense, and
to do that we need the following notation:
%. For any
%bounded function $\phi\in C^2(\R^N)$,  we write
$$I[\phi]=I_\delta[\phi]+I^\delta[\phi]\,,$$
where
$$
I^\delta[\phi]=\int_{|z|\geq\delta}\phi(x+\eta(x,z))-\phi(x)\,d\mu(z)\,.
$$
The $I^\delta$-term is well-defined for any bounded function $\phi$. For
the $I_\delta$-term there are two cases, depending on whether $c=0$ or
$1$ in $(\text{H}_\mu)$. If $c=0$, a Taylor expansion shows that
$I_\delta[\phi](x)$ is well-defined for $\phi\in C^1$ and $x\in\dom$. If $c=1$,
and the measure $\mu$ is very singular, we add and subtract a
compensator and write
\begin{align*}
I_\delta[\phi](x)=\tilde I_\delta[\phi](x)+\mathrm{P.V.}
        \int_{|z|<\delta}D\phi(x)\cdot\eta(x,z)\,d\mu(z),
\end{align*}
for
$$\tilde I_\delta[\phi](x):= \int_{|z|<\delta}\phi(x+\eta(x,z))-\phi(x)-D\phi(x)\cdot\eta(x,z)\,d\mu(z).$$
By the  $C^2$-regularity of $\phi$, these two terms will be
well-defined --
see Lemma \ref{cor:compensator} below. Note that this results is non-trivial
because the compensator is not well defined in general!

\begin{definition}
\label{def1} \label{defin} Assume that $\mathrm{(H_\mu)}$,
$(\mathrm{H}_\eta^i)$ for $i=0,1,2$ hold.

\noindent(i) A bounded usc function $u$ is a viscosity {\em subsolution} to \eqref{E}
if, for any test-function $\phi\in C^2(\R^N)$ and  maximum point $x$ of $u-\phi$
in $B(x,c_\eta\delta)\cap\domb$,
\begin{align*}
& F(x,u(x),I_\delta[\phi]+I^\delta[u])\leq 0 & \quad\text{if }
x\in\dom\quad\text{and}\\
\text{either}\quad&
F(x,u(x),I_\delta[\phi]+I^\delta[u]) \leq 0 &\quad\text{if } x\in\del\dom \text{ and }c=0,\\
\text{or}\quad&
- \frac{\del\phi}{\del x_N}(x)\leq 0 &\quad\text{if } x\in\del\dom \text{ and }c=1.
\end{align*}
\noindent(ii) A bounded lsc function $v$ is a viscosity {\em supersolution} to
\eqref{E} if, for any test-function $\phi\in C^2(\R^N)$ and  minimum point $x$ of $v-\phi$
in $B(x,c_\eta\delta)\cap\domb$,
\begin{align*}
& F(x,v(x),I_\delta[\phi]+I^\delta[v])\geq 0 & \quad\text{if }
x\in\dom\quad\text{and}\\
\text{either}\quad&
F(x,v(x),I_\delta[\phi]+I^\delta[v]) \geq 0 &\quad\text{if } x\in\del\dom \text{ and }c=0,\\
\text{or}\quad&
- \frac{\del\phi}{\del x_N}(x)\geq 0 &\quad\text{if } x\in\del\dom \text{ and }c=1.
\end{align*}
\noindent(iii) A viscosity {\em solution} is both a sub- and a supersolution.
\end{definition}
\begin{remark}
The constant $c_\eta$ is defined in $(\mathrm{H}_\eta^1)$. If
$u$ and $\phi$
are smooth and $x$ is a maximum point of $u-\phi$ over
$B(x,c_\eta\delta)\cap\domb$, then by $(\mathrm{H}_\eta^1)$,
$$u(x)-\phi(x)\geq u(x+\eta(x,z))-\phi(x+\eta(x,z)) \quad\text{for
  all}\quad |z|<\delta.$$
If we rewrite this inequality and integrate, we find formally that
$I_\delta[u](x)\leq I_\delta[\phi](x)$. Lemma \ref{cor:compensator}
below makes this computation rigorous. From this inequality it is easy to
prove that classical (sub)solutions of \eqref{E} are viscosity
(sub)solutions. Moreover, smooth viscosity (sub)solutions
are classical (sub)solutions (simply take $\phi=u$).
\end{remark}
\begin{remark}
In general to pose boundary value problems in the viscosity
sense, one requires that either the minimum of the equation and the
boundary condition is nonpositive or the maximum of the equation and the
boundary condition is nonnegative. Here this is not the case and for a
natural reason. If the measure is very singular (and $c=1$) then
the equation cannot hold on the boundary and the inequality holds just
for the boundary condition. In the $c=0$ case, on the contrary, the
equation will hold up to the boundary and the boundary condition can
not be imposed in general.
In other words, we only end up with a Neumann boundary condition if
$c=1$, i.e. the measure has a ``strong'' singular part $\mu_*$. In
this case the intensity of small jumps is so strong that the
jump-reflection mechanisms, e.g. as in $(a)$ -- $(d)$, are not enough
to prevent the process from ``diffusing'' onto the boundary, and we
need to add a reflection process at the boundary to keep the process
inside (just as in the case of Brownian motion). We also note
that the symmetry of $\mu_*$ is a natural condition in order to
obtain Neumann and not oblique derivative boundary
conditions, cf. Lemma \ref{gamman} and proof.
%Slightly more general assumption on $\mu$ leads to oblique
%derivative conditions and will not be considered in this paper.
\end{remark}

We now prove that $I_\delta[\phi]$ is well-defined for $\phi\in C^2$.

\begin{lemma}\label{cor:compensator}
	Assume $\mathrm{(H_\mu)}$ and $(\mathrm{H}_\eta^i)$ for
        $i=0,1,2$, and let $x\in\dom$, $\phi\in C^2$, and
        $\delta>0$. Then $I_\delta[\phi](x)$ is well-defined since
\begin{align*}
&I_\delta[\phi](x)=\tilde I_\delta[\phi](x)+\mathrm{P.V.}
        \int_{|z|<\delta}D\phi(x)\cdot\eta(x,z)\,d\mu(z),\\
%&=o_\delta(1)\|\phi\|_{C^2},\\[0.2cm]
\intertext{and the compensator term satisfies}
&\mathrm{P.V.}\int_{|z|<\delta}D\phi(x)\cdot\eta(x,z)\,d\mu(z)\\
&=\int_{|z|<\delta}D\phi(x)\cdot\eta(x,z)\,d\mu_\#(z)+c\int_{x_N<|z|<\delta}D\phi(x)\cdot\eta(x,z)\,d\mu_*(z).%\\
%&=|D\phi(x)|o_\delta(1).% \\[0.2cm]
%(iii)\quad &\int_{\delta\leq|z|<1}\phi(x+\eta(x,z))-\phi(x)\,d\mu(z)=I_1[\phi] + o_\delta(1)\|\phi\|_{C^2}.
\end{align*}
Moreover, there is $R=R(x,\eta)>0$ such that
$$I_\delta[\phi](x)=o_\delta(1)\|\phi\|_{C^2(B_R(x))}\quad\text{as}\quad
\delta\ra0. $$
\end{lemma}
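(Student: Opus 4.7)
The plan is to verify the three claims of the lemma in turn: first that $\tilde I_\delta[\phi](x)$ is absolutely convergent, then that the P.V.\ compensator exists and admits the stated representation, and finally that the combined bound $I_\delta[\phi](x)=o_\delta(1)\|\phi\|_{C^2(B_R(x))}$ holds. The main obstacle will be the $\mu_*$ part of the compensator, which is not absolutely convergent near zero and must be tamed using the symmetry of $\mu_*$ together with $(\text{H}_\eta^0)$; everything else reduces to routine Taylor estimates under $(\text{H}_\eta^1)$.

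For $\tilde I_\delta[\phi](x)$, Taylor's theorem applied to $\phi\in C^2$ gives $|\phi(x+\eta(x,z))-\phi(x)-D\phi(x)\cdot\eta(x,z)|\leq\tfrac12\|D^2\phi\|_{L^\infty(B_{c_\eta\delta}(x))}|\eta(x,z)|^2$, which by $(\text{H}_\eta^1)$ is at most $\tfrac{c_\eta^2}{2}\|D^2\phi\|_{L^\infty(B_{c_\eta\delta}(x))}|z|^2$. Since $\int_{|z|<\delta}|z|^2\,d\mu<\infty$ by \eqref{Levy}, $\tilde I_\delta[\phi](x)$ is well-defined, and dominated convergence yields $\int_{|z|<\delta}|z|^2\,d\mu\to 0$ as $\delta\to 0^+$.

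For the compensator I split $\mu=c\mu_*+\mu_\#$. The $\mu_\#$ contribution is absolutely convergent since $|D\phi(x)\cdot\eta(x,z)|\leq c_\eta\|D\phi\|_\infty|z|$ and $\int_{|z|<\delta}|z|\,d\mu_\#<\infty$ by $(\mathrm{H}_\mu)$, so its P.V.\ coincides with the ordinary integral. For the $\mu_*$ contribution (relevant only when $c=1$), the crucial observation is that $|z|<x_N$ forces $|z_N|\leq|z|<x_N$, hence $x_N+z_N>0$, so by $(\text{H}_\eta^0)$ we have $\eta(x,z)=z$ on $\{|z|<x_N\}$. For any $b\in(0,x_N\wedge\delta)$, write the P.V.\ integral over $\{b<|z|<\delta\}$ as the sum over $\{b<|z|\leq x_N\wedge\delta\}$ and $\{x_N\wedge\delta<|z|<\delta\}$. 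On the first shell the integrand reduces to $D\phi(x)\cdot z$, which is odd in $z$; the shell is bounded away from $0$ where $\mu_*$ is finite (since $\int|z|^2\wedge 1\,d\mu_*<\infty$), so the integral is absolutely convergent and vanishes by symmetry of $\mu_*$. The second piece is independent of $b$, has domain bounded away from $0$, and its integrand is bounded by $c_\eta\|D\phi\|_\infty|z|$, so it converges absolutely. Letting $b\to 0^+$ yields exactly the claimed formula, with the understanding that the $\mu_*$ integral vanishes when $\delta\leq x_N$.

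For the $o_\delta(1)$ bound, fix any $R\geq c_\eta$, so $R\geq c_\eta\delta$ for all $\delta\leq 1$. The Taylor estimate gives $|\tilde I_\delta[\phi](x)|\leq\tfrac{c_\eta^2}{2}\|\phi\|_{C^2(B_R(x))}\int_{|z|<\delta}|z|^2\,d\mu\to 0$. For $\delta<x_N$ the $\mu_*$ compensator is identically zero, and the $\mu_\#$ compensator is bounded by $c_\eta\|\phi\|_{C^1}\int_{|z|<\delta}|z|\,d\mu_\#\to 0$ by dominated convergence using $\int(1\wedge|z|)\,d\mu_\#<\infty$. Combining these bounds yields $I_\delta[\phi](x)=o_\delta(1)\|\phi\|_{C^2(B_R(x))}$, as required.
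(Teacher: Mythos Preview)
Your proof is correct and follows essentially the same approach as the paper: the paper packages the key observation---that on $\{|z|<x_N\}$ one has $\eta(x,z)=z$ by $(\mathrm{H}_\eta^0)$ so the $\mu_*$-integral of the odd function $D\phi(x)\cdot z$ over this symmetric shell vanishes---into an auxiliary lemma (Lemma~\ref{lem:mu.sharp}(i)), whereas you prove this inline, but the Taylor bound for $\tilde I_\delta$, the absolute convergence of the $\mu_\#$ piece via $(\mathrm{H}_\eta^1)$, and the vanishing of the $\mu_*$ compensator for $\delta<x_N$ are handled identically.
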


In the following, we often drop the $\mathrm{P.V.}$ notation for such
integrals  since they may be expressed in terms of converging integrals. Note
that the integral over $\{x_N<|z|<\delta\}$
need not vanish since this regions exceeds the boundary and hence
$\eta(x,z)$ will not be odd there.

To prove Lemma \ref{cor:compensator}, we need the following result.
\begin{lemma}\label{lem:mu.sharp}
	Assume $(\mathrm{H}_\mu)$ and $(\mathrm{H}_\eta^i)$ for
        $i=0,1,2$, and let $x_N>0$, $\rho\in(0,x_N)$, and $v\in\R^N$
        be a fixed vector.
	\begin{itemize}
	\item[$(i)$] For $r\in(0,\rho)$,
	$\int_{r<|z|<\rho} v\cdot \eta(x,z)\,d\mu(z)=
	\int_{r<|z|<\rho} v\cdot \eta(x,z)\,d\mu_\#(z)$, and
%Hence, we have also
	$$\mathrm{P.V.}\int_{|z|<\rho} v\cdot \eta(x,z)\,d\mu(z)=
	\int_{|z|<\rho} v\cdot \eta(x,z)\,d\mu_\#(z)\,.$$
	\item[$(ii)$] For $r\in(0,1)$,
	$\int_{r<|z|<\delta} v'\cdot \eta(x,z)'\,d\mu(z)=
	\int_{r<|z|<\delta} v'\cdot \eta(x,z)'\,d\mu_\#(z)$, and
%Hence we have also
	$$\mathrm {P.V.}\int_{|z|<\delta} v'\cdot \eta(x,z)'\,d\mu(z)=
	\int_{|z|<\delta} v'\cdot \eta(x,z)'\,d\mu_\#(z)\,,$$
	\item[$(iii)$] For $r\in(0,1)$,
	$$\int_{r<|z|<\delta} \eta(x,z)_N\,d\mu_*(z)\geq
	\int_{r<|z|<\delta\atop z_N>x_N} (z_N-x_N)\,d\mu_*(z)\geq0\,.$$
	\end{itemize}
\end{lemma}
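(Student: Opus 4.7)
All three identities reduce to killing the $\mu_*$-piece of the integral using the symmetry of $\mu_*$ together with the structural conditions $(\mathrm{H}_\eta^0)$ and $(\mathrm{H}_\eta^2)$ on $\eta$. For (i), since $\rho<x_N$ the condition $|z|<\rho$ forces $|z_N|\leq|z|<x_N$, hence $x_N+z_N>0$ throughout the annulus $\{r<|z|<\rho\}$; by $(\mathrm{H}_\eta^0)$ this gives $\eta(x,z)=z$ on the whole integration region. Splitting $d\mu=c\,d\mu_*+d\mu_\#$, the $\mu_*$-piece becomes $c\int_{r<|z|<\rho}v\cdot z\,d\mu_*(z)$, which vanishes because the integrand is odd in $z$ and the annulus is invariant under $z\mapsto-z$. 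The $\mathrm{P.V.}$ version follows at once: the $\mu_*$-integral is identically $0$ in $r$, and $|v\cdot z|\leq|v|\,|z|$ is absolutely $\mu_\#$-integrable near the origin thanks to $\int(1\wedge|z|)\,d\mu_\#<\infty$.

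For (ii) the annulus now crosses $\{x_N+z_N=0\}$ and $\eta$ may differ from $z$, so I treat the integrand componentwise. For each $i\in\{1,\dots,N-1\}$, the change of variables $z\mapsto\sigma_i z$ preserves both $\{r<|z|<\delta\}$ and the symmetric measure $\mu_*$, whereas $(\mathrm{H}_\eta^2)$ gives $\eta(x,\sigma_i z)_i=-\eta(x,z)_i$; hence $\int_{r<|z|<\delta}\eta(x,z)_i\,d\mu_*$ equals its own negative and vanishes. Summing against $v_i$, and using as in (i) that $|v'\cdot\eta(x,z)'|\leq c_\eta|v'|\,|z|$ by $(\mathrm{H}_\eta^1)$ is absolutely $\mu_\#$-integrable near the origin, yields the claim together with its $\mathrm{P.V.}$ form. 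The subtle point is that this argument requires the symmetry of $\mu_*$ to include invariance under each coordinate reflection $\sigma_i$; this holds automatically for the radially modulated stable-type densities highlighted in the remark following $(\mathrm{H}_\mu)$, and is the spot where one must check that the notion of symmetry used matches the reflections at hand.

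For (iii), I split $\{r<|z|<\delta\}$ along $z_N=-x_N$ into $A=\{z_N\geq-x_N\}$, where $(\mathrm{H}_\eta^0)$ forces $\eta(x,z)=z$, and $B=\{z_N<-x_N\}$, where the same assumption still gives the pointwise bound $\eta(x,z)_N+x_N>0$. Decomposing $A=A_1\cup A_2$ with $A_1=\{z_N>x_N\}$ and $A_2=\{|z_N|\leq x_N\}$, the $z\mapsto-z$ symmetry of $\mu_*$ sends $A_2$ to itself and $A_1$ to $B$, so $\int_{A_2}z_N\,d\mu_*=0$ and $\int_{A_1}z_N\,d\mu_*=-\int_B z_N\,d\mu_*$. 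Rearranging the pieces then gives $\int_{r<|z|<\delta}\eta(x,z)_N\,d\mu_*=\int_B\bigl(\eta(x,z)_N+x_N\bigr)\,d\mu_*+\int_{A_1}(z_N-x_N)\,d\mu_*$; the first summand is $\geq 0$ by $(\mathrm{H}_\eta^0)$ and the second both matches the claimed lower bound and is itself $\geq 0$ since $z_N>x_N$ on $A_1$, yielding both inequalities at once.
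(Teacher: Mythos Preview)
Your proof is correct and follows essentially the same route as the paper: in (i) you use $\rho<x_N$ to reduce $\eta$ to $z$ and kill the odd $\mu_*$-integral, in (ii) you exploit the coordinate antisymmetry $(\mathrm{H}_\eta^2)$ together with reflection-invariance of $\mu_*$, and in (iii) you make the same three-zone decomposition in $z_N$ and pair $\{z_N>x_N\}$ with $\{z_N<-x_N\}$ via $z\mapsto-z$. Your parenthetical remark that (ii) really requires $\mu_*$ to be invariant under each $\sigma_i$ (not merely under $z\mapsto-z$) is perceptive and in fact applies equally to the paper's own argument, which tacitly uses this stronger symmetry.
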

\begin{proof}
 (i) If $|z|<\rho<x_N$, then $\eta(x,z)=z$ by $(\text{H}_\eta^0)$. Hence
  $\eta$ is odd with respect to the $z$ variable, and the integral
  with respect to the symmetric part $c\mu_*$ is zero.
  Passing to the limit as $r\to0$ and using the integrability
 of $\mu_\#$ along with $(\text{H}_\eta^1)$ finishes the proof of
  $(i)$.

(ii) Let $\sigma$ be the rotation $\sigma(z',z_N)=(-z',z_N)$. Since
$\mu_*$ is symmetric,
	$$\int_{r<|z|<\delta} v'\cdot \eta(x,\sigma z)'\,d\mu_*(z)=\int_{r<|z|<\delta} v'\cdot \eta(x,z)'\,d\mu_*(z).$$
Other hand, since
        $\eta(x,-z',z_N)'=-\eta(x,z',z_N)'$ by $(\text{H}^2_\eta)$,
         the above integral is zero.  The result on the principal value is
        obtained as in the first case, after letting $r\to0$.

(iii)	Let us decompose
	$$\begin{aligned}
	&\int_{r<|z|<\delta}\eta(x,z)_N\,d\mu_*(z)=\int_{r<|z|<\delta \atop -x_N\leq z_N\leq x_N}
	\eta(x,z)_N\,d\mu_*(z)\\
	&\quad+\int_{r<|z|<\delta \atop z_N>x_N}\eta(x,z)_N\,d\mu_*(z)+\int_{r<|z|<\delta \atop z_N<-x_N}
	\eta(x,z)_N\,d\mu_*(z)\,.
	\end{aligned}$$
	The integral over $-x_{N}\leq
        z_N\leq x_{N}$ vanishes since
	$\eta(y,z)=z$ in this region and $\mu_*$ is symmetric. By
        $(\mathrm{H}_\eta^{0})$ we have that $\eta(x,z)_N\ge -x_{N}$
        if $z_N<-x_{N}$ and $\eta(x,z)=z_N>x_{N}$ if $z_N>x_{N}$.
	Hence by symmetry of $\mu_*$,
	$$\int_{r<|z|<\delta}
        \eta(x,z)_N\,d\mu_*(z)\geq\int_{r<|z|<\delta\atop z_N>x_N}
        (z_N-x_N)\,d\mu_*(z)\geq 0,$$
and the proof is complete.
\end{proof}

\begin{proof}[Proof of Lemma \ref{cor:compensator}]
The expression for $I_\delta$ is obtained by adding and substracting
the compensator term. The first integral in this expression is
well-defined since the integrand is smooth and bounded by the function
        $\frac12|z|^2\max_{B(x,R)}|D^2\phi|$, for $R=\max_{y\in
          B(0,\delta)}|\eta(x,y)|$, which is an $\mu$-integrable function
        over $B(0,\delta)$. Moreover,
        $\int_{0<|z|<\delta}|z|^2\,d\mu(z)=o_\delta(1)$ as
        $\delta\ra0$ since $|z|^2$
        is $\mu$-integrable near $0$.

In the compensator term, the integral with respect to $\mu_\#$ exists by
$\mathrm{(H_\eta^1)}$, while the integral with respect to $\mu_*$ over
$B(0,x_N)$ vanishes by Lemma \ref{lem:mu.sharp}-$(i)$.
	Since $|z|$ is integrable near the origin for $\mu_\#$,
        this term is $|D\phi(x)|o_\delta(1)$ as
        $\delta\to0$.
\end{proof}

%%%%%%%%%%%%%%%%%%%%%%%%%%%%%%%%%%%%%%%%%%%%%%%%%%%%%%%%%%%%%%%%%
\section{Derivation of the boundary value problem - PIDE approach}\label{der-pide}

In this section we derive the boundary value
problems from approximate problems involving a sequence of bounded
positive Radon measures $\mu^k=1_{\{|z|>1/k\}}\mu$ converging to $\mu$.
Assume $\mathrm{(H_\mu)}$ and let
$\mu_\#^k=1_{\{|z|>1/k\}}\mu_\#$ and
$\mu_*^k=1_{\{|z|>1/k\}}\mu_*$, it then  easily follows that
\begin{align*}
{(\mathrm{H}_\mu^1)}&\quad\lim_{k\ra+\infty}\int |z|\wedge 1 \ d\mu_\#^k(z) =
\int |z|\wedge 1 \ d\mu_\#(z),\\
{(\mathrm{H}_\mu^2)}&\quad \lim_{k\ra+\infty}\int |z|^2\wedge 1 \ d\mu_*^k(z) =
\int |z|^2\wedge 1 \ d\mu_*(z),\\
{(\mathrm{H}_\mu^3)}&\quad \lim_{k\ra+\infty}\int |z|\wedge 1 \
d\mu_*^k(z) = \infty.
\end{align*}
The approximation problem we consider is then given by
\begin{align}
\label{En}
 u(x)-I_{\mu_k}[u](x)=f(x)\quad\text{in}\quad\domb,
\end{align}
where, for $\phi \in C_b(\domb)$,
$$
I_{\mu_k}[\phi](x)=\int_{|z|>0}\phi(x+\eta(x,z))-\phi(x)\ d\mu^k(z).
$$

Since the measures $\mu^k$ are bounded, this equation holds in a classical,
pointwise sense. Moreover, it is well-posed in $C_b(\domb)$ and the
solutions $u_k$ are uniformly bounded in $k$:
\begin{lemma}\label{lem_uk}
Assume $(\mathrm{H}_f),\ (\mathrm{H}_\mu),\ (\mathrm{H}_\eta^0),$ and $(\mathrm{H}_\eta^3)$.

\noindent (a) For every $k$, there is a unique pointwise solution
$u_k$ of \eqref{En} in $C_b(\domb)$.

\noindent (b) If $u_k$ and $v_k$ are pointwise sub- and supersolutions of
\eqref{En}, then $u_k\leq v_k$ in $\domb$.

\noindent (c) If $u_k$ is a pointwise solution of \eqref{En}, then  $\|u_k\|_{L^\infty(\dom)}\leq
\|f\|_{L^\infty(\dom)}$.
\end{lemma}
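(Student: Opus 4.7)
The plan is to exploit the fact that $\mu^k$ has finite total mass $M_k:=\mu^k(\R^N)<\infty$ in order to rewrite \eqref{En} as a contractive fixed point problem in $C_b(\domb)$. Since
\[
I_{\mu_k}[u](x)=\int u(x+\eta(x,z))\,d\mu^k(z)-M_k\,u(x),
\]
equation \eqref{En} is equivalent to $u=T u$ where
\[
Tu(x):=\frac{1}{1+M_k}\Big(f(x)+\int u(x+\eta(x,z))\,d\mu^k(z)\Big).
\]
The first step is to check that $T:C_b(\domb)\to C_b(\domb)$. Boundedness is immediate from $\|T u\|_\infty\le\frac{1}{1+M_k}(\|f\|_\infty+M_k\|u\|_\infty)$; continuity follows from $(\mathrm{H}_\eta^3)$ and dominated convergence (the integrand $u(x+\eta(x,z))$ is uniformly bounded by $\|u\|_\infty$, and $(\mathrm{H}_\eta^0)$ guarantees $x+\eta(x,z)\in\domb$ so that $u$ is defined there). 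The second step is to observe that
\[
\|Tu-Tv\|_\infty\le\frac{M_k}{1+M_k}\,\|u-v\|_\infty,
\]
so $T$ is a strict contraction on the Banach space $C_b(\domb)$. Banach's fixed point theorem then yields part (a).

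For the comparison statement (b), let $u_k$ be a pointwise subsolution and $v_k$ a supersolution and set $w:=u_k-v_k\in C_b(\domb)$. Subtracting the two inequalities and using linearity of $I_{\mu_k}$ gives
\[
(1+M_k)\,w(x)\le \int w(x+\eta(x,z))\,d\mu^k(z)\qquad\text{for every }x\in\domb.
\]
Let $S:=\sup_{\domb}w$ and assume for contradiction that $S>0$. For $\e>0$ pick $x_\e$ with $w(x_\e)>S-\e$; then the previous inequality at $x_\e$ gives $(1+M_k)(S-\e)\le M_k S$, i.e.\ $S\le (1+M_k)\e$. Letting $\e\to0$ yields $S\le 0$, a contradiction. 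Hence $w\le0$ on $\domb$, which is the claimed comparison.

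Finally, (c) is an immediate consequence of (b) applied to the constants $\pm\|f\|_{L^\infty(\dom)}$. Indeed, if $u\equiv C$ is constant then $I_{\mu_k}[u]\equiv 0$, so $C$ is a pointwise supersolution as soon as $C\ge f$ and a subsolution as soon as $C\le f$. Taking $C=\|f\|_\infty$ and $C=-\|f\|_\infty$ in part (b) against the solution $u_k$ from part (a) produces the two-sided bound $\|u_k\|_{L^\infty(\dom)}\le\|f\|_{L^\infty(\dom)}$. The only mildly subtle point in the whole argument is the $C_b$-preservation of $T$, where one really needs the weak continuity assumption $(\mathrm{H}_\eta^3)$; everything else is a straightforward consequence of $M_k<\infty$.
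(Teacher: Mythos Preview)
Your proof is correct and follows the same overall strategy as the paper: a Banach fixed point argument for (a), a maximum-principle style comparison for (b), and constants as barriers for (c). The technical implementations differ slightly. For (a), the paper introduces an auxiliary map $Tu=u-\eps\big(u-I_{\mu_k}[u]-f\big)$ with small $\eps>0$ and checks it contracts, whereas you rewrite the equation directly as $u=\frac{1}{1+M_k}\big(f+\int u(\cdot+\eta)\,d\mu^k\big)$ and obtain the clean Lipschitz constant $\frac{M_k}{1+M_k}$; your version is arguably tidier and avoids a small algebraic slip present in the paper. For (b), the paper argues at an attained maximum and then appeals to ``a standard penalization argument'' for the case when the supremum is not achieved; your $\eps$-supremum argument handles both cases at once and is self-contained. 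One cosmetic point: in (b) you write $w=u_k-v_k\in C_b(\domb)$, but pointwise sub/supersolutions are not assumed continuous---only bounded---and indeed your argument never uses continuity of $w$, only boundedness, so the claim is harmless.
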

\begin{proof}
(a) Let  $T:C_b(\domb)\to C_b(\domb)$ be the operator defined by
$$Tu:=u-\eps\big(u-I_{\mu_k}[u]-f\big)\,,$$
where $\eps<(1+2\|\mu^k\|_1)^{-1}$ and $\|\mu^k\|_1$ is the total (finite!) mass of the measure $\mu^k$.
Then $T$ is a contraction in the Banach space $C_b(\domb)$ since
$$\begin{aligned}
\|Tu-Tv\|_\infty&\leq(1-\eps)\|u-v\|_\infty+2\eps\|\mu^k\|_1\|u-v\|_\infty\\
&\leq \big(1-\eps(1+2\|\mu^k\|_1)\big)\|u-v\|_\infty\\
&\leq C(k)\|u-v\|_\infty\,,
\end{aligned}$$
and $C(k)<1$. Hence there exists a unique $u_k\in C_b(\domb)$ such
that $Tu_k=u_k$, which is equivalent to \eqref{En}.

(b) If $\sup_{\dom} (u-v)$ is
attained at a point $x\in \domb$, then by the equation
%sub and supersolution inequalities
and the easy fact that $I_{\mu_k}[\phi]\leq0$
at a maximum point of $\phi$,
$$\sup_\dom (u-v) = u(x)-v(x)\leq I_{\mu_k}[u-v](x)\leq 0.$$
The general case follows after a standard penalization argument.

(c) Follows from (b) since $\pm \|f\|_{L^\infty(\dom)}$ are sub- and
supersolutions of \eqref{En}.
\end{proof}

%Now we want to use the half relaxed limit method
%to go to the limit $k\ra +\infty$ in \eqref{En}. To understand what the
%limiting problem is, we formally rewrite equation \eqref{En} as
%$$u(x)+J^{(k)}[u,u](x) -
%Du(x)\gamma_{\mu_k,\delta}(x)-f(x)=0\quad\text{in}\quad \domb,$$
%where
%\begin{align*}
% J^{(k)}[u,\phi](x)&=\int_{|z|>0}u(x+\eta(x,z))
%-u(x)-1_{|z|<\delta}\eta(x,z)D\phi(x)\ d\mu^k(z),\\
%\gamma_{\mu_k,\delta}(x)&=\int_{0<|z|<\delta}\eta(x,z)\ d\mu^k(z).
%\end{align*}

The limiting problem can be identified through the half relaxed limit method:

\begin{theorem}\label{limit} Assume $(\mathrm{H}_f)$, $\mathrm{(H_\mu)}$, and
  $(\mathrm{H}_\eta^i)$ for $\ i=0,1,2,3$ hold.
	Then the half-relaxed limit functions
	$$\ol{u}(x)=\limsup_{k\ra+\infty,y\ra x}u_k(y)\quad\text{and}\quad
\ul{u}(x)=\liminf_{k\ra+\infty,y\ra x}u_k(y)\,$$
are respectively sub- and supersolutions of the Neumann boundary problem in the sense of  Definition~\ref{def1}.
\end{theorem}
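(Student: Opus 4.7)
The plan is to prove the subsolution statement for $\ol u$; the supersolution claim for $\ul u$ follows by reversing inequalities throughout. Fix a test function $\phi\in C^2(\R^N)$ and a maximum point $x_0\in\domb$ of $\ol u-\phi$ on $B(x_0,c_\eta\delta)\cap\domb$. Replacing $\phi$ by $\phi+|\cdot-x_0|^4$ we may assume the maximum is strict, and a standard argument then produces a subsequence (still indexed by $k$) and points $x_k\to x_0$ which are maxima of $u_k-\phi$ over $\ol{B(x_0,c_\eta\delta')}\cap\domb$ for any fixed $\delta'<\delta$, with $u_k(x_k)\to\ol u(x_0)$. For $|z|<\delta'$ and $k$ large, $(\mathrm{H}_\eta^0)$--$(\mathrm{H}_\eta^1)$ ensure $x_k+\eta(x_k,z)\in\ol{B(x_0,c_\eta\delta)}\cap\domb$, so the maximum property gives $u_k(x_k+\eta(x_k,z))-u_k(x_k)\leq\phi(x_k+\eta(x_k,z))-\phi(x_k)$; integrating against $\mu^k$ yields $I_{\delta',\mu^k}[u_k](x_k)\leq I_{\delta',\mu^k}[\phi](x_k)$. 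Substituting in \eqref{En} produces the master inequality
\[
u_k(x_k)-I_{\delta',\mu^k}[\phi](x_k)-I^{\delta',\mu^k}[u_k](x_k)\leq f(x_k).
\]

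To pass to the limit, I bound the far-jump term using Lemma~\ref{lem_uk}(c): the integrand of $I^{\delta',\mu^k}[u_k]$ is dominated by $2\|f\|_\infty$, which is $\mu$-integrable on $\{|z|\geq\delta'\}$ by \eqref{Levy}. Combined with $(\mathrm{H}_\eta^3)$ and the definition of $\ol u$ (so that $\limsup_k u_k(x_k+\eta(x_k,z))\leq\ol u(x_0+\eta(x_0,z))$ $\mu$-a.e.), reverse Fatou gives $\limsup_k I^{\delta',\mu^k}[u_k](x_k)\leq I^{\delta'}[\ol u](x_0)$. For $I_{\delta',\mu^k}[\phi]$, I use Lemma~\ref{cor:compensator}: the Taylor-remainder piece $\tilde I_{\delta',\mu^k}[\phi](x_k)$ converges by dominated convergence with dominating function $\tfrac12|z|^2\|D^2\phi\|_\infty$, and the $\mu_\#$-compensator converges by $(\mathrm{H}_\mu^1)$ and dominated convergence. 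In the interior case $x_0\in\dom$ (or the boundary case with $c=0$, where $\mu_*$ is absent), the $\mu_*$-compensator lives on $\{x_{k,N}<|z|<\delta'\}$, a set that is uniformly bounded away from $0$ since $x_{k,N}\to x_{0,N}>0$, so it too converges. Passing $k\to\infty$ yields $F(x_0,\ol u(x_0),I_{\delta'}[\phi]+I^{\delta'}[\ol u](x_0))\leq 0$ as required.

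The main difficulty is the remaining case $x_0\in\bound$ with $c=1$, where I must show $-\del_N\phi(x_0)\leq 0$. I argue by contradiction: suppose $\del_N\phi(x_0)<0$. By $(\mathrm{H}_\eta^2)$ and Lemma~\ref{lem:mu.sharp}(ii), the tangential components of the $\mu_*$-part of the compensator $\int_{x_{k,N}<|z|<\delta'}D\phi(x_k)\cdot\eta(x_k,z)\,d\mu_*$ vanish, leaving the pure-normal contribution $\del_N\phi(x_k)\int_{x_{k,N}<|z|<\delta'}\eta(x_k,z)_N\,d\mu_*$. Lemma~\ref{lem:mu.sharp}(iii) bounds this remaining integral below by $\int_{|z|<\delta',\,z_N>x_{k,N}}(z_N-x_{k,N})\,d\mu_*$, which diverges to $+\infty$ as $x_{k,N}\to 0$ thanks to the strong singularity of $\mu_*$ in $(\mathrm{H}_\mu^3)$ together with its symmetry. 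Since $\del_N\phi(x_k)\to\del_N\phi(x_0)<0$, the full $\mu_*$-compensator, and therefore $I_{\delta',\mu^k}[\phi](x_k)$, diverges to $-\infty$. But rearranging the master inequality and using Lemma~\ref{lem_uk}(c) provides the uniform lower bound $I_{\delta',\mu^k}[\phi](x_k)\geq -2\|f\|_\infty(1+\mu(\{|z|\geq\delta'\}))$, a contradiction. Hence $\del_N\phi(x_0)\geq 0$, establishing the Neumann inequality. The supersolution statement for $\ul u$ follows by the symmetric argument: assuming $\del_N\phi(x_0)>0$ forces $I_{\delta',\mu^k}[\phi](x_k)\to+\infty$, contradicting the corresponding uniform upper bound.
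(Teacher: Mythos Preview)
Your proof is correct and follows essentially the same approach as the paper's: the interior and $c=0$ boundary cases are handled identically via Fatou and dominated convergence through the decomposition of Lemma~\ref{cor:compensator}, and in the $c=1$ boundary case both arguments hinge on the same fact (Lemma~\ref{gamman} in the paper, Lemma~\ref{lem:mu.sharp}(iii) together with $(\mathrm{H}_\mu^3)$ in your version) that the normal component of the $\mu_*$-compensator diverges to $+\infty$. The only cosmetic difference is that the paper divides the master inequality by $|\gamma_{\mu_k,\delta}(x_k)|$ and passes to the limit directly to obtain $-\partial_N\phi(x_0)\leq 0$, whereas you package the same divergence as a contradiction argument; also, the paper explicitly sends $\delta'\to\delta$ at the end (by dominated convergence) to recover the inequality at the original $\delta$, a step you should add for completeness.
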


In the proof we will need the following result whose proof is given at
the end of this section.
\begin{lemma}\label{gamman}
Assume $(\mathrm{H}_\eta^i)$ holds for $i=0,1,2$, $\mathrm{(H_\mu)}$
holds with $c=1$, and let $\delta>0$ and
$\gamma_{\mu_k,r}(x):=\int_{|z|<r}\eta(x,z)\,d\mu^k(z)$. If $y_k\to
x\in \bound$ as $k\to\infty$, then
	$$|\gamma_{\mu_k,\delta}(y_k)|\to\infty\quad\text{and}\quad
	\frac{\gamma_{\mu_k,\delta}(y_k)}{|\gamma_{\mu_k,\delta}(y_k)|}
        \to-\mathbf{n},$$
        where
	$\mathbf{n}=(0,0,\dots,0,-1)$ is an outward normal vector
	of $\del\dom$.
\end{lemma}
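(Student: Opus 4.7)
The plan is to decompose $\gamma_{\mu_k,\delta}(y_k)$ into tangential and normal parts. Writing $\gamma_{\mu_k,\delta}(y_k) = (g_k', g_{k,N}) \in \R^{N-1} \times \R$, I aim to show that $g_k'$ stays bounded while $g_{k,N} \to +\infty$. The two conclusions of the lemma then follow at once, since $-\mathbf{n} = e_N$ and a bounded vector added to a large positive multiple of $e_N$ is, after normalisation, close to $e_N$.

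For the tangential part, apply Lemma \ref{lem:mu.sharp}(ii): the contribution of the symmetric singular piece $\mu_*$ to $\int \eta(y_k,z)'\, d\mu^k$ vanishes by the antisymmetry in $(\mathrm{H}_\eta^2)$, so
\[
g_k' \ =\ \int_{1/k < |z| < \delta} \eta(y_k,z)'\, d\mu_\#(z).
\]
By $(\mathrm{H}_\eta^1)$ and $\int (1 \wedge |z|)\, d\mu_\# < \infty$, this is uniformly bounded in $k$. For the normal part, split $g_{k,N}$ into its $\mu_\#$ and $\mu_*$ contributions. The $\mu_\#$ contribution is uniformly bounded exactly as above, and Lemma \ref{lem:mu.sharp}(iii) gives
\[
\int_{1/k < |z| < \delta} \eta(y_k,z)_N\, d\mu_*(z) \ \geq\ \int_{\{1/k < |z| < \delta,\, z_N > (y_k)_N\}} (z_N - (y_k)_N)\, d\mu_*(z).
\]

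Since $(y_k)_N \to 0$ and $1/k \to 0$, Fatou's lemma bounds the liminf of the right-hand side below by $\int_{|z| < \delta,\, z_N > 0} z_N\, d\mu_*(z)$, which by the reflection symmetry of $\mu_*$ in the $z_N$-direction equals $\tfrac12 \int_{|z| < \delta} |z_N|\, d\mu_*(z)$. This integral is infinite: it is the expression of the singularity $\int_{|z|<1} |z|\, d\mu_* = \infty$ of the $\mu_*$-part of the L\'evy measure once the tangential contributions are filtered out by the coordinate symmetries of $\mu_*$ already used in Lemma \ref{lem:mu.sharp}(ii). Combining this with the uniform boundedness of $g_k'$ and of the $\mu_\#$-part of $g_{k,N}$ yields $g_{k,N} \to +\infty$, hence $|\gamma_{\mu_k,\delta}(y_k)| \to \infty$ and $\gamma_{\mu_k,\delta}(y_k)/|\gamma_{\mu_k,\delta}(y_k)| \to e_N = -\mathbf{n}$. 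The main technical obstacle I anticipate is precisely the divergence $\int_{|z|<\delta}|z_N|\,d\mu_* = \infty$: it is not forced by $\int |z|\, d\mu_* = \infty$ alone, and one must invoke the tangential-reflection symmetry of $\mu_*$ (the very same symmetry that makes Lemma \ref{lem:mu.sharp}(ii) work) to rule out the scenario where all the singular mass sits in the tangential directions.
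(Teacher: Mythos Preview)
Your proposal is correct and follows essentially the same route as the paper's proof: bound the tangential part via Lemma~\ref{lem:mu.sharp}(ii), bound the $\mu_\#$-contribution to the normal part directly, then use Lemma~\ref{lem:mu.sharp}(iii) and Fatou to reduce the divergence of the $\mu_*$-contribution to $\int_{|z|<\delta}|z_N|\,d\mu_*=\infty$. The paper makes the last step explicit by writing $\tfrac12\int|z_N|\,d\mu_*=\tfrac{1}{2N}\int\sum_i|z_i|\,d\mu_*\geq \tfrac{C}{2N}\int|z|\,d\mu_*=\infty$, which is precisely the coordinate-symmetry argument you anticipate in your final remark.
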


\begin{proof}[Proof of Theorem \ref{limit}]
Since the proofs are similar for $\ol u$ and $\ul u$, we only do the
one for $\ol u$. Let $\delta>0$ and $\phi\in C^2$, and assume that
$\ol u-\phi$ has a maximum point $x$ in
$B(x,c_\eta\delta)\cap\domb$. Let us first
consider the case when
$x\in\dom$, i.e. when $x_N>0$. By modifying the test-function, we may
always assume that the maximum  is strict.
% in $B(x,\delta)$.
By standard arguments, $u_k - \phi$ has
a maximum point $y_k$ in $B(x,c_\eta\delta)$, and when $k\ra+\infty$,
$$y_k\ra x\qquad \text{and}\qquad u_k(y_k)\ra \ol u(x).$$
% Since $u_k$ is a pointwise solution of \eqref{En}, we first write the
% equation in the form
% $$u_k(y_k)-\Big(I_{\mu_k}[u_k](y_k)-\gamma_{\mu_k,\delta}(y_k)\cdot D\phi(y_k)\Big)-\gamma_{\mu_k,\delta}(y_k)\cdot D\phi(y_k)\leq f (y_k).$$
Let $\delta_k=\delta-|x-y_k|$ and $0<r\leq\delta_k$, and note that
$B(y_k,c_\eta r)\subset B(x,c_\eta\delta)$. Since the max of $(u_k-\phi)$ in
$B(y_k,c_\eta r)$ is  attained at $y_k$, we find that
$$\begin{aligned}
&(I_{\mu_k})_{r}[u_k](y_k):=\int_{|z|<r}u_k(y_k+\eta(y_k,z))-u_k(y_k)\,d\mu^k\\
&\leq
\int_{|z|<r}\phi(y_k+\eta(y_k,z))-\phi(y_k)\,d\mu^k=(I_{\mu_k})_{r}[\phi](y_k).
\end{aligned}$$
Hence, since $u_k$ is a pointwise solution of \eqref{En}, we find for
all $0<r\leq\delta_k$,
\begin{align*}
%\label{Ek}
u_k(y_k)-(I_{\mu_k})_{r}[\phi](y_k)-(I_{\mu_k})^{r}[u_k](y_k)
\leq f(y_k),
\end{align*}
where
$(I_{\mu_k})^r[u_k](x):=\int_{|z|\geq r}u_k(x+\eta(x,z))-u_k(x)\,d\mu^k(z)$.

We want to pass to the limit in this
equation and consider first the $(I_{\mu_k})^r$-term. By the definition of $\ol
u$ and $(\mathrm{H}_\eta^3)$,
$$\limsup_{k\ra+\infty} u_k(y_k+\eta(y_k,z))\leq \ol u(x+\eta(x,z))\quad \text{for
  a.e. }z.$$
Hence, since we integrate away from the singularity of
$\mu$, we can use Fatou's lemma and $(\mathrm{H}_\mu^1)$ and
$(\mathrm{H}_\mu^2)$ to show that
\begin{align*}
&\limsup_{k\ra\infty}(I_{\mu_k})^r[u_k](y_k)\leq \int_{|z|>r}
\ol{u}(x+\eta(x,z))-\ol{u}(x)\, d\mu(z)=I^r[\ol u](x).
\end{align*}

To pass to the limit in the $(I_{\mu_k})_r$-term, we have to write it as
\begin{align*}
&(I_{\mu_k})_r[\phi](y_k)=\\
&\underbrace{\int_{|z|<r}\phi(x+\eta(y_k,z))-\phi(y_k)
 -D\phi(y_k)\cdot \eta(y_k,z)
\,d\mu^k(z)}_{=(\tilde I_{\mu_k})_r[\phi](y_k)} +
\gamma_{\mu_k,r}(y_k)\cdot D\phi(y_k),
\end{align*}
where $\gamma_{\mu_k,r}(x):=\int_{|z|<r}\eta(x,z)\,d\mu^k(z).$
For $|z|<r$, a Taylor expansion then yields
$$\big|\phi(y_k+\eta(y_k,z))-\phi(y_k)-D\phi(y_k)\cdot\eta(y_k,z)\big|\leq
\|D^2\phi\|_{L^\infty(B(x,c_\eta r))}|z|^2.$$
Hence by $(\mathrm{H}_\eta^1)$, $(\mathrm{H}_\eta^3)$,
$(\mathrm{H}_\mu^1)$ and $(\mathrm{H}_\mu^2)$, we can use the
Dominated Convergence Theorem to
show that
$$
 (\tilde I_{\mu_k})_r[\phi](y_k)\ra
\int_{|z|<r}\phi(x+\eta(x,z))
-\phi(x)-\eta(x,z)D\phi(x)\ d\mu(z)=\tilde I_r[\phi](x).
$$
Next, by Lemma~\ref{cor:compensator},
$$\gamma_{\mu_k,r}(y_k)=\int_{|z|<r}\eta(y_k,z)\,d \mu^k_\#(z)
+c\int_{y_{k,N}\leq |z|<r}\eta(y_k,z)\,d \mu^k_*(z),$$
where the last integral is understood to be zero if
$y_{k,N}>r$. Note that since
$y_{k,N}\to x_N>0$, the domain of integration of the $\mu_*$-integral
is always bounded away from $z=0$ when $k$ is big.
Along with $(\mathrm{H}_\eta^3)$ and $(\text{H}_\mu^2)$,
this allows us to pass to the limit in the $\mu_*$-integral using
the Dominated Convergence Theorem. Similarly, we may pass to the limit
in the $\mu_\#$-integral by $(\mathrm{H}_\eta^1)$,
$(\mathrm{H}_\eta^3)$ and $(\text{H}_\mu^1)$. We find that
\begin{align*}
\gamma_{\mu_k,r}(y_{k,N})\to\gamma_r(x)&:=\int_{|z|<r}z\,d \mu_\#(x)+
c\int_{x_N\leq |z|<r}\eta(x,z)\,d \mu_*(x)\\
&=\mathrm{P.V.}\int_{|z|<r}\eta(x,z)\,d \mu(x)=:\gamma_r(x),
\end{align*}
where we used Lemma~\ref{cor:compensator} again. Hence we can conclude that
$$\lim_{k\ra\infty}(I_{\mu_k})_{r}[\phi](y_k)=\tilde
I_r[\phi](y_k)+\gamma_r(x)\cdot D\phi(x)=I_r[\phi](x).$$
Since $\delta_k\ra\delta$, we end up with the following limit equation,
$$\ol{u}(x)-I_{r}[\phi](x)-I^r[\ol u](x)\leq f(x)$$
for every $0<r<\delta$. Using the Dominated Convergence Theorem again,
we send $r\ra\delta$ and obtain the subsolution condition for \eqref{E} at
the point $x\in\dom$.

The second part of the proof is to consider the case of when
$x\in\del\dom$, i.e the case when $x_N=0$. We first do it in the
case $c=1$. By adding, subtracting, and divinding by terms, we may
rewrite the subsolution condition as
\begin{align*}
&\frac{u_k(y_k)-(\tilde I_{\mu_k})_{\delta}[\phi](y_k)-(I_{\mu_k})^{\delta}[u_k](y_k)-f(y_k)}{|\gamma_{\mu_k,\delta}(y_k)|}-\frac{\gamma_{\mu_k,\delta}(y_k)\cdot
  D\phi(y_k)}{|\gamma_{\mu_k,\delta}(y_k)|}\leq 0.
\end{align*}
By Lemma~\ref{gamman}, $|\gamma_{\mu_k,\delta}(y_k)|\to\infty$, and
since $u_k$ and $f$ are uniformly bounded,
$$\frac{u_k(y_k)}{|\gamma_{\mu_k,\delta}(y_k)|}\,,\quad \frac{f(y_k)}{|\gamma_{\mu_k,\delta}(y_k)|}\,,\quad
\frac{(I_{\mu_k})^{\delta}[u_k](y_k)}{|\gamma_{\mu_k,\delta}(y_k)|},$$
all converge to zero. The same is true for
$$\frac{(\tilde I_{\mu_k})_{\delta}[\phi](y_k)}{|\gamma_{\mu_k,\delta}(y_k)|}$$
since the integrand of the numerator is controlled by
$\|D^2\phi\|_\infty|z|^2{1}_{\{|z|<\delta\}}$ and $\mu^k$ satisfies
$(\mathrm{H}_\mu^1)$. Using Lemma~\ref{gamman} again, we have
$\gamma_{\mu_k,\delta}(y_k)/|\gamma_{\mu_k,\delta}(y_k)|\to
-\mathbf{n}$, so that we may go to the limit in the above inquality to find that
$$-\frac{\partial \phi}{\partial x_N}(x)=\frac{\partial \phi}{\partial \mathbf{n}}(x)\leq 0\,.$$

In the case when $c=0$, the measure $\mu=\mu_\#$ which
less singular than $\mu_*$. The same line of
arguments as in the proof for $x\in\dom$ (much easier this time) now
shows that the equation holds at $x\in\del\dom$.
\end{proof}

\begin{proof}[Proof of Lemma~\ref{gamman}]
First note that by Lemma \ref{lem:mu.sharp} with $y_k$ instead of $x$ and $\mu^k$
        instead of $\mu$,
	$$\gamma_{\mu_k,\delta}(y_k)'=\int_{|z|<\delta}\eta(y_k,z)'\,d\mu^k(z)=\int_{|z|<\delta}\eta(y_k,z)'\,d\mu_\#^k(z)\,,$$
which remains uniformly bounded in $k$ because of $(\mathrm{H}_\eta^1)$ and our assumption on $\mu_\#$.
	Since $y_{k,N}\to x_N=0$,
	we can assume that $0\leq y_{k,N}<\delta$, and by Lemma
        \ref{lem:mu.sharp},
	$$(\gamma_{\mu_k,\delta})_N(y_k)=\int_{|z|<\delta}\eta(y_k,z)_N\,d\mu_\#^k(z) + \int_{y_{k,N}<|z|<\delta}\eta(y_k,z)_N\,d\mu_*^k(z)\,.
	$$
	As above, the first integral is uniformly
        bounded as $k\to\infty$. For the second one, we send $r\to0$ in
	Lemma \ref{lem:mu.sharp}-$(iii)$ to find that
	\begin{equation}\label{eq:est.eta}
	\int_{|z|<\delta}\eta(y_k,z)_N\,d\mu_*^k(z)\geq \int_{|z|<\delta \atop y_{k,N}<z_N}(z_N-y_{k,N})\,d\mu_*^k(z)\geq0,
	\end{equation}
	and, since $y_{k,N}\to0$, we can then use Fatou's lemma to show that
	$$\int_{|z|<\delta\atop z_N>0} z_N\,d\mu_*(z)\leq\liminf_{k\to\infty}
	\int_{|z|<\delta \atop y_{k,N}<z_N}(z_N-y_{k,N})\,d\mu_*^k(z)\,.$$
	Applying symmetry of the measure $\mu_*$ twice, we are lead to
	$$\int_{|z|<\delta\atop z_N>0} z_N\,d\mu_*(z) = \frac 1 2 \int_{|z|<\delta} |z_N|\,d\mu_*(z)=
	\frac 1 {2N} \int_{|z|<\delta}\sum_{i=1} ^N|z_i|\,d\mu_*(z),$$
so by taking $(\mathrm{H}_\mu^3)$ into account, there is a constant $C=C(N)>0$ such that
	$$\int_{|z|<\delta\atop z_N>0} z_N\,d\mu_*(z) \ge  \frac C {2N} \int_{|z|<\delta} |z| \,d\mu_*(z)=\infty.$$

    Hence we have proved that
    $(\gamma_{\mu_k,\delta})_N(y_k)\to\infty$ as $k\to\infty$, and if
    we use that $\big(\gamma_{\mu_k,\delta}\big)'(y_k)$ is uniformly
    bounded, we see that
	$$\frac{\gamma_{\mu_k,\delta}(y_k)}{|\gamma_{\mu_k,\delta}(y_k)|}=\Big(\frac{(\gamma_{\mu_k,\delta})'(y_k)}{|\gamma_{\mu_k,\delta}(y_k)|}
	\,,\, \frac{(\gamma_{\mu_k,\delta})_N(y_k)}{|\gamma_{\mu_k,\delta}(y_k)|} \Big)\longrightarrow (0,0,\cdots,0,1)=-\mathbf{n}.$$

\end{proof}
% \begin{remark}
% A similar result would hold on the more general assumption that $\int_{|z|<\delta\atop z_N>0} z_N\,d\mu_*(z) = + \infty$,
% which may be seen as a kind of ellipticity in the normal direction.
% \end{remark}

%%%%%%%%%%%%%%%%%%%%%%%%%%%%%%%%
\section{Comparison in non-censored cases}
\label{sec:nonsens}
In this section we prove a comparison result
for the non-censored cases, i.e. under assumption
$(\mathrm{H}_\eta^i)$ for $i=0,\dots,5$. These assumptions covers all
the examples given in the introduction, except example (a) -- the
censored case. As a
conseqence of the comparison result and the results of the previous
sections, we also obtain well-posedness for \eqref{E}. The comparison
result is the following:

\begin{theorem}\label{fleas}
Assume $(\mathrm{H}_\mu)$, $(\mathrm{H}_f)$, and
$(\mathrm{H}_\eta^i)$ hold for $i=0,1,2,3,4,5$.
    Let $u$ be a bounded usc subsolution of (\ref{E}) with data $f \in C_b(\R^N)$,
    $v$ be a bounded lsc supersolution of (\ref{E}) with data $g \in C_b(\R^N)$
    such that $f\leq g$ in $\domb$. Then $u\leq v$ on $\dom$.
\end{theorem}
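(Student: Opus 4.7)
My plan is a viscosity proof by contradiction based on a doubling of variables argument tailored to the Neumann geometry. Suppose $M := \sup_{\domb}(u-v) > 0$. I would introduce
\[
\Phi_{\eps,\kappa,\theta}(x,y) = u(x) - v(y) - \frac{|x-y|^2}{\eps^2} - \kappa(|x|^2 + |y|^2) + \theta(x_N + y_N),
\]
for small $\eps,\kappa,\theta > 0$. The $\kappa$-term yields a maximizer $(\bar x,\bar y)$ by compactness (since $u,v$ are bounded), and the $\theta$-term is the key device: it forces $(\bar x,\bar y)$ strictly into $\dom\times\dom$ in the case $c=1$. Indeed, if $\bar x \in \bound$, the subsolution Neumann condition applied to the test function $\phi_1(x) = v(\bar y) + |x-\bar y|^2/\eps^2 - \theta x_N + \kappa|x|^2$ would require $\partial_{x_N}\phi_1(\bar x) \geq 0$, i.e.\ $-2\bar y_N/\eps^2 - \theta \geq 0$, impossible for $\bar y_N \geq 0$ and $\theta > 0$. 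A symmetric argument excludes $\bar y \in \bound$. (When $c=0$ the equation itself holds up to the boundary and no such perturbation is needed.) Standard penalization estimates give $|\bar x-\bar y|^2/\eps^2 \to 0$ as $\eps \to 0$, and $u(\bar x)-v(\bar y) \to M_{\kappa,\theta}$ with $M_{\kappa,\theta} \to M > 0$ as $\kappa,\theta \to 0$.

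Subtracting the viscosity inequalities at the interior maximizer $(\bar x,\bar y)$ for some small $\delta > 0$ yields
\[
u(\bar x) - v(\bar y) - f(\bar x) + g(\bar y) \leq \bigl[I_\delta[\phi_1](\bar x) - I_\delta[\phi_2](\bar y)\bigr] + \bigl[I^\delta[u](\bar x) - I^\delta[v](\bar y)\bigr].
\]
For the $I_\delta$ terms, Lemma \ref{cor:compensator} gives $|I_\delta[\phi_i]| \leq o_\delta(1)\,\|\phi_i\|_{C^2(B_R)} = o_\delta(1)\,O(\eps^{-2}+\kappa)$ since the $\phi_i$ are polynomial of degree at most two. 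For the $I^\delta$ terms I would invoke the global maximum property of $\Phi$ evaluated at the pair $(\bar x+\eta(\bar x,z),\bar y+\eta(\bar y,z))$: this bounds the integrand by the variation of the penalty $\psi$, whose principal contribution is $(|P(\bar x,z)-P(\bar y,z)|^2 - |\bar x-\bar y|^2)/\eps^2$. Decomposing $P(\bar x,z)-P(\bar y,z)$ into normal and tangential pieces and invoking $(\mathrm{H}_\eta^5)$ (contraction in the normal component, giving a nonpositive normal contribution) together with $(\mathrm{H}_\eta^4)$ (tangential Lipschitz) yields a bound on the tangential contribution of the form $C|z|(2+C|z|)|\bar x'-\bar y'|^2/\eps^2$. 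Splitting the integration at $|z|=R$ (using this estimate for $\delta\leq|z|\leq R$, the boundedness of $u,v$ for $|z|>R$, and $\int(|z|^2\wedge 1)\,d\mu < \infty$) shows $I^\delta[u](\bar x)-I^\delta[v](\bar y) \to 0$ as first $\eps\to 0$ and then $R\to\infty$.

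Passing to limits in the order $\eps \to 0$ with $\delta = \delta(\eps) \to 0$ chosen slowly enough that $o_\delta(1)/\eps^2 \to 0$, then $R\to\infty$, and finally $\theta,\kappa \to 0$, one recovers $M \leq \sup_\domb(f-g) \leq 0$, the sought contradiction. The main obstacle is the $I^\delta$ estimate: having contraction only in the normal direction precludes the usual one-shot doubling-of-variables contraction and instead forces an asymmetric treatment of the normal and tangential components of $\eta(\bar x,z)-\eta(\bar y,z)$, using $(\mathrm{H}_\eta^5)$ and $(\mathrm{H}_\eta^4)$ respectively. This is precisely the modification of the standard viscosity doubling argument advertised in the introduction.
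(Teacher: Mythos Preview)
Your overall architecture is right, but there is a genuine gap in the treatment of the tangential part of the penalty, and it is exactly the point where the paper's proof differs from yours.

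You double variables with a single parameter $\eps$ and then write
\[
|P(\bar x,z)'-P(\bar y,z)'|^2-|\bar x'-\bar y'|^2\leq C|z|(2+C|z|)\,|\bar x'-\bar y'|^2,
\]
claiming that $(\mathrm{H}_\eta^4)$ gives $|\eta(\bar x,z)'-\eta(\bar y,z)'|\leq C|z|\,|\bar x'-\bar y'|$. But $(\mathrm{H}_\eta^4)$ is stated only for points sharing the \emph{same} normal coordinate: it bounds $|\eta(x',s,z)'-\eta(y',s,z)'|$ with a common $s$. With a single $\eps$ you have $\bar x_N\neq\bar y_N$ in general, and no hypothesis controls $\eta(\cdot,z)'$ in the $x_N$-variable beyond the a.e.\ convergence $(\mathrm{H}_\eta^3)$; for model~(b), for instance, $\eta(x,z)'=z'\min(1,x_N/|z_N|)$ depends on $x_N$ and is not Lipschitz in $x_N$ uniformly in $z$. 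This is precisely why the paper splits the penalty as $|x'-y'|^2/\eps'^2+|x_N-y_N|^2/\eps_N^2$ with two independent parameters: $(\mathrm{H}_\eta^5)$ makes the $\eps_N$-contribution nonpositive immediately, then one sends $\eps_N\to0$ \emph{first} so that in the limit $\bar x_N=\bar y_N$, and only then is $(\mathrm{H}_\eta^4)$ legitimately invoked to show the $\eps'$-contribution is $o_{\eps'}(1)$.

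There is also a secondary issue with your choice of unbounded penalties $\kappa|x|^2$ and $\theta x_N$. When you feed the maximum inequality back into $I^\delta$ over $\delta\leq|z|\leq R$, the variations of these terms involve $\int|z|\,d\mu$ over large $|z|$, which need not be finite under the L\'evy condition \eqref{Levy} (only $\int(|z|^2\wedge1)\,d\mu<\infty$ is assumed). With your order of limits ($R\to\infty$ before $\theta,\kappa\to0$) the resulting error terms can blow up. The paper avoids this by using bounded cutoffs $\psi_R$ and $\mathrm{d}_\nu$ in place of $|x|^2$ and $x_N$; their nonlocal contributions are then genuine $o_R(1)$ and $o_\nu(1)$ terms. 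Your handling of $I_\delta$ via ``$\delta=\delta(\eps)$ slow enough'' is also fragile, since the $o_\delta(1)$ in Lemma~\ref{cor:compensator} is not uniform as $\bar x_N\to0$; the cleaner route (and the paper's) is to send $\delta\to0$ first with all other parameters fixed.
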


From this result it follows that the half-relaxed limits in Theorem
\ref{limit} satisfy $\ol u\leq\ul u$ in $\domb$. Since the opposite
inequality is always satisfied, this means that $u:=\ol u =\ul u$
is solution of \eqref{E} according to Definition
\ref{def1}. Uniqueness and continuous dependence (on $f$) follows from
Theorem \ref{fleas} by standard arguments and we have the following result:

\begin{corollary}
Under the assumptions of Theorem \ref{fleas}, there exits a unique
viscosity solution of \eqref{E} depending continuously on $f$.
\end{corollary}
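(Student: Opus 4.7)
My plan is to derive the three claims -- existence, uniqueness, and continuous dependence on $f$ -- as essentially immediate consequences of the comparison principle (Theorem \ref{fleas}) combined with the approximation result (Theorem \ref{limit}) and the linearity of the equation. None of the three steps should present a serious obstacle at this point, since all the analytic work is contained in the preceding sections.

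For existence I would invoke the approximation scheme of Section \ref{der-pide}: Lemma \ref{lem_uk} gives a unique bounded pointwise solution $u_k$ of the regularized problem \eqref{En}, with $\|u_k\|_\infty\leq\|f\|_\infty$ uniformly in $k$. I would then form the half-relaxed limits
\[
\ol u(x)=\limsup_{k\to\infty,\,y\to x}u_k(y), \qquad \ul u(x)=\liminf_{k\to\infty,\,y\to x}u_k(y),
\]
which are bounded usc and lsc on $\domb$, with $\ul u\le\ol u$ automatic. By Theorem \ref{limit}, $\ol u$ is a viscosity subsolution and $\ul u$ a viscosity supersolution of \eqref{E}, both with the same right-hand side $f$. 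Applying Theorem \ref{fleas} with $g=f$ forces $\ol u\le\ul u$ on $\domb$, hence $\ol u=\ul u$; the common value $u$ is continuous and is the sought viscosity solution.

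Uniqueness is then immediate: if $u_1,u_2$ are two bounded viscosity solutions with data $f$, each is simultaneously a sub- and a supersolution, and applying Theorem \ref{fleas} twice with the roles of $u$ and $v$ swapped yields $u_1\le u_2$ and $u_2\le u_1$.

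For continuous dependence, I would exploit the linearity of $F(x,r,l)=r-l-f(x)$ together with the fact that $I$ annihilates constants (a direct consequence of Lemma \ref{cor:compensator}, since the integrand $c-c$ and the derivative of a constant both vanish). Given solutions $u_f,u_g$ corresponding to data $f,g\in C_b(\domb)$, the function $u_g+\|f-g\|_\infty$ is a viscosity supersolution of \eqref{E} with data $f$: adding a constant does not affect the Neumann condition, and on the interior one gains exactly $\|f-g\|_\infty\ge f-g$ on the right-hand side. Theorem \ref{fleas} then gives $u_f\le u_g+\|f-g\|_\infty$, and swapping the roles of $f$ and $g$ produces the reverse inequality, so that $\|u_f-u_g\|_\infty\le\|f-g\|_\infty$. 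The only small verification worth writing out is that adding a constant preserves both parts of Definition \ref{defin}; this is routine because test functions enter the inequalities only through $\phi$ and its derivatives, which are unchanged by such a shift.
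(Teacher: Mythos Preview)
Your proposal is correct and follows essentially the same route as the paper: existence via the half-relaxed limits from Theorem~\ref{limit} combined with the comparison principle, and then uniqueness and continuous dependence as standard consequences of Theorem~\ref{fleas}. The paper in fact does not give a separate proof of the corollary at all, merely noting in the paragraph preceding it that ``Uniqueness and continuous dependence (on $f$) follows from Theorem~\ref{fleas} by standard arguments''; your write-up is precisely a clean expansion of those standard arguments.
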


\begin{proof}[Proof of Theorem \ref{fleas}]
 We argue by contradiction assuming that $M:=\sup_{\domb}(u-v)>0$.
We provide the full details only when $c=1$. The case $c=0$ is far
simpler since the equation then holds even on the boundary.

    To get a contradiction, we first introduce the function
	$$\Psi_{R}(x):=u(x)-v(x)-\psi_R(x,x)\,,$$
    where $\psi_R$ is a localisation term which makes the max attained:
    $$\psi_R(x,y)=\psi(|x_N|/R)+\psi(|y_N|/R)+\psi(|x'|/R)+\psi(|y'|/R)\,,$$
	with $\psi$ a smooth function such that
	$$\psi(s)=\begin{cases}
		0 & \text{for }0\leq s<1/2\,,\\
		\text{increasing} & \text{for } 1/2\leq s<1\,,\\
		2(\|u\|_\infty+\|v\|_\infty + 1) & \text{for }s\geq1\,.\\
	\end{cases}$$
    Of course, $M_{R}:=\max\Psi_{R} \to M$ as $R\to\infty$ so that
    for $R$ big, $\max\Psi_R>0$. Then there are two cases for such $R>0$:
\smallskip

    \noindent\textbf{(a)} either there exists a maximum point $\bar{x}_{R}$ of $\Psi_{R}$
    which is not located on the boundary. In this case the proof is quite classical:
    we use the doubling of variables plus a localisation
    term around $\bar{x}_{R}$ by considering the max of
	$$u(x)-v(y)-\frac{|x'-y'|^2}{\eps'^2}-\frac{|x_N-y_N|^2}{\eps_N^2}
	-\psi_R(x,y)-\sigma|x-\bar{x}_{R}|^4\,,$$
where $\sigma >0$ is small. The localization term $\sigma|x-\bar{x}_{R}|^4$
is chosen so that $\bar{x}_{R}$ is the unique maximum point of
$x\mapsto \Psi_{R}(x) -\sigma|x-\bar{x}_{R}|^4$, and by choosing
$\sigma$ small enough, the contribution of this function in the
integral term is small\footnote{In the viscosity
  inequalities, the various penalization terms are only integrated
  near the origin, e.g. in $\{|z|<\delta\}$. Therefore we do
  not need to worry about the integrability at infinity of $|x|^2$ and
  $|x|^4$ with respect to the measure $\mu$.}.

 Hence the maximum points $(\bar{x},\bar{y})$ of the above test-function converges necessarely to $(\bar{x}_{R},\bar{x}_{R})$ as $\eps', \eps_N\to 0$ and they are also bounded away from the boundary if $\eps', \eps_N$ are small enough. This property implies that we can use directly the equation and obtain $\max\Psi_R\leq0$, which is a contradiction.
\smallskip

    \noindent\textbf{(b)} or any maximum point $\bar{x}_{R}$ is located on the boundary. In this case we
    use the doubling of variables plus some extra term to push the points inside (see below)${}^1$:
    $$\Phi_{\eps',\eps_N,\nu,R}(x,y):=u(x)-v(y)-\frac{|x'-y'|^2}{\eps'^2}-\frac{|x_N-y_N|^2}{\eps_N^2}
	-\psi_R(x,y)+\mathrm{d}_\nu(x_N)+\mathrm{d}_\nu(y_N)\,.$$
    In this case we can assume without loss of generality that the maximum points $\bar{x},\bar{y}$ are
    always such that $0<\bar{x}_N,\bar{y}_N<1$, whatever $\eps_N,\eps',\nu>0$ are.

    Note that in both cases we take two distinct real parameters $\eps',\eps_N>0$ in order to take
    into account the special contraction property in the $N$-th direction, see $(\mathrm{H}_\eta^5)$.
    Now, since case (a) is rather standard, we only provide a proof of case (b) which is more involved.

    The term $\mathrm{d}_\nu$ plays the role of
    a distance to the boundary of the domain; such term is usual in classical Neumann
    proofs in order to prevent the maximum points to be on the boundary. More precisely,
   	for $\nu>0$, we take $\mathrm{d}_\nu(\cdot)=\nu \mathrm{d}(\cdot)$ where $d$ is a smooth function such that
	$$\mathrm{d}(s)=\begin{cases}
		s & \text{for }0\leq s<1/2\,,\\
		\text{increasing} & \text{for } 1/2\leq s<1\,,\\
		1 & \text{for }s\geq1\,.\\
	\end{cases}$$

%    We will use that $$M_{\eps',\eps_N,\nu,R}:=\max\Phi_{\eps',\eps_N,\nu,R} \to M_{R}$$
%    but this is true only because we are going to let first $\eps_N$ tend to $0$, then
%    $\eps'$ tend to $0$ and finally $\nu\to0$, for fixed $R$; to conclude we shall let $R$
%    tend to $+\infty$ to get $M$ in the limit.

	Let us note that if $0<\nu<1$ and $R\gg1$ are fixed,
	then $\Phi:=\Phi_{\eps',\eps_N,\nu,R}\leq0$ for $|x|,|y|$ large enough,
	while, by choosing $x=y$ in a suitable way by taking into account the fact that $M>0$,
	we have $\Phi_{\eps',\eps_N,\nu,R}(x,x)>0$ for $\nu$ small; hence the maximum of $\Phi$ is attained
	at some point $(\bar{x},\bar{y})\in\bar{\dom}^2$, that we still denote by $(x,y)$ for simplicity.

  After proving that the points $\bar x,\bar y$ are inside $\dom$, we are going to first let $\eps_N\to0$,
	then $\eps'\to0$, then $\nu\to0$ and finally $R\to\infty$.
	Because of this use of parameters, we have
    $$M_{\eps_N,\eps',\nu,R}:=\max\Phi_{\eps_N,\eps',\nu,R}\to M>0\,.$$
    In particular, this implies that
    $$x_N-y_N = O(\eps_N)\,,\ x'-y'=O(\eps')\,,\
    \frac{|x'-y'|^2}{\eps'^2}=o_{\eps_N,\eps'}(1)\,,$$
    where the $O(\eps_N),O(\eps')$ are uniform with respect to all the parameters, and
    the $o_{\eps_N,\eps'}(1)$ means precisely that after passing to the limit as $\eps_N\to0$,
    we are left with a quantity which is an
    $o_{\eps'}(1)$. Also note that
    $$u(x)-v(y)=M+o_{\eps_N,\eps',\nu,R}(1)\,,$$
where the order of the parameters is important as explained above.
\smallskip

	\noindent\textbf{Step 1 --} \textit{Pushing the points inside.}\\
    We denote by
    $$\varphi(x,y):= \frac{|x'-y'|^2}{\eps'^2}+\frac{|x_N-y_N|^2}{\eps_N^2}
	+\psi_R(x,y)-\mathrm{d}_\nu(x_N)-\mathrm{d}_\nu(y_N)\; ,$$
    where we have dropped the parameters for the sake of simplicity of notations.

    In this step, we prove that the $F$-viscosity inequalities hold for $u$ and $v$.
    According to Definition~\ref{def1}, this is clearly the case if $c=0$ since these viscosity
    inequalities hold even if the maximum or minimum points are on the boundary.

    In the $c=1$ case, let us assume that the maximum point $(x,y)$ is such that $x_N=0$,
    then $x$ is a (global) maximum point of the function $z \mapsto u(z)-v(y)-\varphi(z,y)$ and,
    thanks to Definition~\ref{def1}, we should have $-\frac{\partial \varphi}{\partial x_N}(x,y)\leq 0$.
    But, recalling that $\frac{\partial \psi_R}{\partial x_N}$ is zero in a neighborhood of the boundary, we have
    $$-\frac{\partial \varphi}{\partial x_N}(x,y)=-\frac{2(x_N-y_N)}{\eps_N^2}-\frac{\partial\psi_R}{\partial x_N}(x,y)+
	\frac{d}{ds}\mathrm{d}_\nu(0) = \frac{2y_N}{\eps_N^2}+\nu>0 \; ,$$
    which is a contradiction. Therefore $x_N$ cannot be zero and a similar argument shows that $y_N>0$ as well,
    hence both $x$ and $y$ are inside $\dom$.
\smallskip

	\noindent\textbf{Step 2 --} \textit{Writing the viscosity inequalities and sending $\delta$ to zero.}\\
    We introduce a (small) fixed parameter $0<\delta<1$ which is the parameter appearing in Definition~\ref{defin}
    in order to give sense to different terms in the equation.
    We write the definition of viscosity sub and super solutions, using the test-function
	in the ball $B_\delta$ for a $\delta<\rho:=\min(x_N,y_N,1)$, and the functions $u$ and $v$ outside this ball.
	Since $u$ is a viscosity subsolution and the function $u(\cdot)-v(y)-\varphi(\cdot,y)$ reaches a maximum
	at $x$, then we have the viscosity subsolution condition that we write as follows,
	thanks to Lemma \ref{cor:compensator}:
	$$\begin{aligned}
	&u(x)-\int_{|z|<\delta} [\varphi(x+\eta(x,z),y)-\varphi(x,y)-D_x\varphi(x,y)\cdot\eta(x,z)]\,d\mu(z)\\
	&-\mathrm{P.V.}\,\int_{|z|<\delta}D_x\varphi(x,y)\eta(x,z)\,d\mu(z)-
	\int_{|z|\geq\delta} [ u(x+\eta(x,z))-u(x)] \,d\mu(z)\leq f(x)\,.
	\end{aligned}$$

	For simplicity of notations, we leave out the $\mathrm{P.V.}$ notation since
	the integral can be expressed
	as converging integrals and we use the notation $P(x,z):=x+\eta(x,z)$.
	Next we use Lemma \ref{lem:mu.sharp}-$(i)$ and the similar super solution condition on $v$ to get

	$$\begin{aligned}
	&-\int_{|z|<\delta} [\varphi(P(x,z),y)-\varphi(x,y)-D_x\varphi(x,y)\cdot\eta(x,z)]\,d\mu(z)\\
	&-D_x\varphi(x,y)\cdot\int_{|z|<\delta}\eta(x,z)\,d\mu_\#(z)\\
	&-\int_{|z|<\delta}[\varphi(x,P(y,z))-\varphi(x,y)+D_y\varphi(x,y)\cdot\eta(y,z)]\, d\mu(z)\\
	&+D_y\varphi(x,y)\cdot\int_{|z|<\delta}\eta(y,z)\,d\mu_\#(z)\\
	&-\int_{|z|\geq\delta}[u(P(x,z))-v(P(y,z))-u(x)+v(y)]\,d\mu(z)\\
	&+ u(x)-v(y)\leq f(x)-f(y) \,.
	\end{aligned}$$
    In order to pass to the limit as $\delta\to0$ to get rid of the test-function $\varphi$, we use
    Lemma~\ref{cor:compensator} for all the terms which are smooth functions: the integrals
    over $B(0,\delta)$ all vanish as $\delta\to0$ and we are left with
    limit of the integral over
    $\{|z|>\delta\}$.
	To this end, we split this integral into two integrals, one over $\{|z|\geq1\}$
	(which is independent of $\delta$ of course) and the other over $\{\delta\leq|z|<1\}$ that we have
	to deal with.

	Using the definition of the	maximum point for $\Phi$, we have that for any $z$:
	$$u(P(x,z))-v(P(y,z))-\varphi(P(x,z),P(y,z))\leq u(x)-v(y)-\varphi(x,y)\,.$$
	Hence, it follows that
	\begin{align*}
		 &u(P(x,z))-v(P(y,z))-(u(x)-v(y))  \\[2mm]
		&\leq\frac{|P(x,z)_N-P(y,z)_N|^2}{\eps_N^2}-\frac{|x_N-y_N|^2}{\eps_N^2}
		+\frac{|P(x,z)'-P(y,z)'|^2}{\eps'^2}-\frac{|x'-y'|^2}{\eps'^2}\\
                &\quad + \psi_R(P(x,z),P(y,z))-\psi_R(x,y)\\
&\quad		 -\mathrm{d}_\nu(P(x,z)_N)+\mathrm{d}_\nu(x_N)-\mathrm{d}_\nu(P(y,z)_N)+\mathrm{d}_\nu(y_N)\,,
	\end{align*}
	and we put this inequality into the integral over $\{\delta\leq|z|<1\}$ which gives rise to
	several terms denoted by (with obvious notation):
	$$
	    \int_{\delta\leq|z|<1}\big\{u(P(x,z))-v(P(y,z))-u(x)+v(y)\big\}\,d\mu(z)\leq
	    T_{\eps_N}^\delta+T_{\eps'}^\delta+T_{\psi_R}^\delta+T_{\mathrm{d}_\nu}^\delta\,.
	$$
	As for the $\eps_N$-terms, we get rid of them by $(\mathrm{H}_\eta^5)$ which implies
	$T_{\eps_N}^\delta\leq0$.
	Then for the $\eps'$-terms we write
	$$\begin{aligned}
	T_{\eps'}^\delta =&\int_{\delta\leq|z|<1}\Big(\frac{|P(x,z)'-P(y,z)'|^2}{\eps'^2}-\frac{|x'-y'|^2}{\eps'^2}\Big)d\mu\\
	\leq &\frac{1}{\eps'^2}\int_{\delta\leq|z|<1} |\eta(x,z)'-\eta(y,z)'|^2\, d\mu(z)\\
		&+\frac{2}{\eps'^2}\int_{\delta\leq|z|<1}(x'-y')\cdot(\eta(x,z)'-\eta(y,z)')\,d\mu(z)\,.
	\end{aligned}$$
	For the first term of $T_{\eps'}^\delta$, we use the
        domination of the integrand by $c|z|^2$ to pass to the limit
	as $\delta\to0$. For the second one, we use
        Lemma~\ref{lem:mu.sharp}-$(iii)$ which allows to
	wipe out the symmetric $\mu_*$-contribution, so that we get in the limit
	\begin{align*}
	\limsup_{\delta\to0}T_{\eps'}^\delta\leq&\ \frac{1}{\eps'^2}\int_{0<|z|<1} |\eta(x,z)'-\eta(y,z)'|^2\, d\mu(z)\\
	&+\frac{2}{\eps'^2}\int_{0<|z|<1}(x'-y')\cdot(\eta(x,z)'-\eta(y,z)')\,d\mu_\#(z)\,.
	\end{align*}

	We concentrate now on the penalisation terms which are given by integrals of smooth functions.
	Note first that we are in the case when $0<x_N,y_N<1$, so that
	the $\psi(x_N/R)$ and $\psi(y_N/R)$-terms vanish (we assumed that $R\gg1$).
	Hence, using Lemma \ref{cor:compensator} we get as
        $\delta\to0$ the following two contributions:
	\begin{align*}
	\lim_{\delta\to0}T_{\mathrm{d}_\nu}^\delta=&-\tilde I_1[\mathrm{d}_\nu](x) -\,\frac{d}{ds}\mathrm{d}_\nu(x_N)\,
	\mathrm{P.V.}\,\int_{0<|z|<1}\eta(x,z)_N\,d\mu(z)+(\dots)(y),\\
	\lim_{\delta\to0}T_{\psi_R}^\delta\leq&-\tilde I_1[\tilde\psi_R](x) -D\tilde\psi_R(x)'\cdot
	\mathrm{P.V.}\,\int_{0<|z|<1}\eta(x,z)_N\,d\mu(z)+(\dots)(y).
	\end{align*}
	where $\tilde\psi_R(x):=\psi(|x'|/R)$ and the $(\dots)(y)$
        notation stands for the same terms but calculated at $y$
        instead of $x$.  Now, note that
        $\frac{d}{ds}\mathrm{d}(x_N)>0$ and use
        Lemma~\ref{lem:mu.sharp}-$(iii)$ (with $r=x_N>0$).  This gives
        that in the principal value for $T_{\mathrm{d}_\nu}$, the
        $\mu_*$-term which is multiplied by $(-\nu)$ has a nonpositive
        contribution. So we find that
    $$
	\lim_{\delta\to0}T_\mathrm{d_\nu}^\delta\leq -\nu\,\tilde I_1[\mathrm{d}](x)-\nu\,\frac{d}{ds}\mathrm{d}(x_N)\,
	\int_{0<|z|<1}\eta(x,z)_Nd\mu_\#(z)+(\dots)(y)=o_\nu(1)\,.
	$$
    As for the $T_{\psi_R}$-term, this time
	we use Lemma \ref{lem:mu.sharp}-$(ii)$, which implies that the symmetric $\mu_*$-part of
	the principal value vanishes:
	$$\begin{aligned}
	\lim_{\delta\to0}T_{\psi_R}^\delta&= -\tilde I_1[\tilde\psi_R](x)-\frac{1}{R}\left[\frac{d\psi}{ds}(|x'|/R)\right]'\cdot
	\int_{0<|z|<1}\eta(x,z)'\,d\mu_\#(z)+(\dots)(y)\\
	&\leq C(\mu)\Big(\frac{1}{R^2}\|\psi\|_{C^2}+\frac{1}{R}\|\psi\|_{C^1}\Big)=o_R(1)\,.
	\end{aligned}
    $$

	Thus, the parameters $\eps',\eps_N,\nu,R>0$ are still fixed for the moment and after sending $\delta\to0$
	we have obtained:
	$$\begin{aligned}
	u(x)-v(y)\leq &\ f(x)-f(y)+o_\nu(1)+o_R(1)\\
	&+\frac{1}{\eps'^2}\int_{|z|<1} |\eta(x,z)'-\eta(y,z)'|^2\, d\mu(z)\\
	&+\frac{2}{\eps'^2}\int_{|z|<1}(x'-y')\cdot(\eta(x,z)'-\eta(y,z)')\,d\mu_\#(z)\\
	&+\int_{|z|\geq1}\big\{u(P(x,z))-v(P(y,z))-u(x)+v(y)\big\}\,d\mu(z)\\
	&=f(x)-f(y)+o_\nu(1)+o_R(1)+\mathrm{Int_1}+\mathrm{Int_2}+\mathrm{Int_3}\,.
	\end{aligned}$$
\smallskip

	\noindent\textbf{Step 3 -- } \textit{Sending the parameters to
          their limits }\\
        We let first $\eps_N\to0$, the other parameters remaining
        fixed for the moment and we recall that $|x_N-y_N|=O(\eps_N)$.
        Moreover, as long as $R$ is fixed, the points $x,y$ remain in
        a compact subset of $\domb$; therefore we can assume without
        loss of generality that $x,y$ are converging to points (still
        denoted by $x,y$) such that $x_N=y_N$.

    Combining $(\mathrm{H}_\eta^3)$ and $(\mathrm{H}_\eta^4)$, we
    obtain in particular that
$$\lim_{\eps_N\ra0}|\eta(x,z)'-\eta(y,z)'|\leq
C|z||x'-y'|\quad\text{for $\mu$-a.e. }z\,.$$
Then, $(\mathrm{H}_\eta^1)$ and the integrability
    condition on $\mu_\#$ justify that we can use dominated
    convergence in $\mathrm{Int_2}$. The argument is similar for $\mathrm{Int_1}$, using the domination
	$$|\eta(x,z)'-\eta(y,z)'|^2\leq (2c_\eta)^2|z|^2\,.$$
	So we find that  $\lim_{\eps_N\ra0}\mathrm{Int_2}=0$ while
$$\lim_{\eps_N\ra0}\mathrm{Int_1}\leq C\frac{|x'-y'|^2}{\eps'^2}\int_{|z|<1}|z|^2d\mu(z)=o_{\eps'}(1).
 $$

The $o_R(1)$ and $o_\nu(1)$ terms are uniform with respect to the other
parameters, so there is no problem to send $\eps_N,\eps'\to0$
here. Next, since $|x-y|\ra0$ as $\eps_N,\eps'\to0$
here, we may assume that $x,y\ra \bar x\in\dom$ by considering a
subsequence is necessary. By continuity of $f$, it then follows that
$\big(f(x)-f(y)\big)\ra 0$.

    We then pass also to the limit as $\nu\to0$ and get:
    \begin{equation}\label{ineq:ultimate.step}
    u(\bar{x})-v(\bar{x})\leq
    \limsup_{\nu\to0}\,\limsup_{\eps'\to0}\,\limsup_{\eps_N\to0}\,\left[\mathrm{Int_3}\right]
    +o_R(1)\,.
    \end{equation}
    Passage to the limit in the $\mathrm{Int_3}$ term is possible because the
	domain of integration does not meet the singularity of the
        integral: we need only use the u.s.c.
	and l.s.c. properties of $u$ and $v$, together with Fatou's Lemma (because the integrand is bounded
	and $\mu$ is finite on $\{|z|\geq1\}$).
	After passing to the limit in $\eps_N,\eps'$ and $\nu$,
	we have by definition
	$$\lim_{\nu\to0}\lim_{\eps'\to0}\lim_{\eps_N\to0}\big(u(x)-v(y)\big)=M+o_R(1)$$ so that
\begin{align*}
&\limsup_{\nu\to0}\,\limsup_{\eps'\to0}\,\limsup_{\eps_N\to0}\,\left[\mathrm{Int_3}\right]\\
&\leq
	\int_{|z|\geq1}\big\{u(P(\bar x ,z))-v(P(\bar x,z))-\big(M+o_R(1)\big)\big\}\,d\mu(z)\,.
\end{align*}
	Now since and $u(P(\bar x ,z))-v(P(\bar x,z))\leq \sup_{\domb}(u-v)=M$,
$$\limsup_{\nu\to0}\,\limsup_{\eps'\to0}\,\limsup_{\eps_N\to0}\,[\mathrm{Int}_3] \leq \int_{|z|\geq1}\, o_R(1) \,d\mu(z) = o_R(1).$$
When  $R\to\infty$ in \eqref{ineq:ultimate.step}, we get
$M\leq0$ and the proof is complete.
\end{proof}

%%%%%%%%%%%%%%%%%%%%%%%%%%%%%%%%%%%%%%%%%%%%%%%%%%%

\section{Comparison in the censored case I.}
\label{Sec:alp1}

In this section we give comparison and well-posedness results for the initial
value problem \eqref{E} in the censored case (under assumption
$(\mathrm{H}_\eta^6)$) when the measure $\mu$ is not too singular:
\begin{itemize}
\item[$(\mathrm{H}_\mu')$] The measure $\mu$ is a nonnegative Radon
  measure satisfying
	\begin{align*}
	(i)\ \ 	 \int_{\R^N}|z|\wedge1\,d\mu<\infty\quad\hbox{and} \quad (ii)\ \ 	\int_{\{z_N=a\}}\,d\mu = 0\quad\hbox{for any  }a<0\; .
	\end{align*}
\end{itemize}
In addition, we assume the existence of a ``blow-up supersolution''%\\[-0.2cm]
\begin{itemize}
\item[$(U) $] There exists $R_0>0$ such that, for any $R>R_0$, there is a
positive function $\U_R\in C^2(\dom)$ such that
\begin{align*}
-I[\U_R](x)&\geq-K_R\quad\text{in}\quad \{x:0<x_N\leq R\},
\\
\intertext{for some $K_R\geq0$, and}
\U_R(x) &\geq
\frac1{\omega_R(x_N)}\quad \text{in}\quad \dom,
%\text{for all $x$ satisfying}\quad x_N\in(0,r),
\end{align*}
for some function $\omega_R$ which is nonnegative, continuous, strictly increasing
in a neighbourhood of $0$, and  satisfies $\omega(0)=0$.
\end{itemize}

\begin{remark}
See Appendix \ref{App:Blowup} for a discussion of this
assumption. E.g. in Remark \ref{RemA1} we
prove that (U) holds if
$$\mu=\bar\mu+\sum_{i=1}^Mc_i\delta_{x^i}
%\qquad\text{for}\qquad x^i_N>0
$$
where $c_i\in\R$, $\delta_{x^i}$ are delta measures supported at $\{x^i\}$ for
$x^i_N>0$, and
$$\frac{d\bar\mu}{dz}=\frac{g(z)}{|z|^{N+\alp}}\qquad\text{where}\quad
\alp\in(0,1),\ 0\leq g\in
L^\infty(\R),\ \lim_{z\ra
  0}g(z)=g(0)>0.$$
This class of measures include the L\'evy measures of the stable,
tempered stable, and self-decomposable L\'evy processes. Much more
general examples are presented in Appendix \ref{App:Blowup}.
\end{remark}

\begin{theorem}
\label{thm:censor1}
Assume $(\mathrm{H}_\mu')$, $(\mathrm{H}_f)$, $(\mathrm{H}_\eta^6)$ and
(U) hold. Let  $u$ be a bounded usc subsolution of \eqref{E} and
$v$ be a bounded lsc supersolution of \eqref{E}. Then $u\leq
v$ in $\domb$.
\end{theorem}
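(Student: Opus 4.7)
The plan is a penalization-and-doubling argument: the blow-up supersolution $\U_R$ from $(U)$ is used to force the comparison points strictly into $\dom$, where a doubling of variables in the spirit of Theorem~\ref{fleas} will produce the contradiction. A key point is that under $(\mathrm{H}_\mu')$ one is in the $c=0$ case of $(\mathrm{H}_\mu)$, so the Neumann boundary condition is not imposed: the viscosity inequalities for $u$ and $v$ hold up to $\partial\dom$, and we only need to compare solutions of the equation.

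Assume for contradiction that $M:=\sup_{\domb}(u-v)>0$. For $R>R_0$ and $\xi\in(0,1)$, localize in the tangential and radial directions by a cut-off $\psi_R$ as in Theorem~\ref{fleas}, and introduce the blow-up penalty: study
$$M_{\xi,R}:=\sup_{\domb}\bigl(u-v-\xi\,\U_R-\psi_R\bigr).$$
Since $\U_R(x)\geq 1/\omega_R(x_N)\to\infty$ as $x_N\to 0$, any maximizer $\bar{x}=\bar{x}_{\xi,R}$ satisfies $\bar{x}_N\geq\delta(\xi,R)>0$, and $M_{\xi,R}\to M$ as $\xi\to 0$ and $R\to\infty$. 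Next, double the variables by considering
$$\Phi_\eps(x,y):=u(x)-v(y)-\xi\,\U_R(x)-\frac{|x-y|^2}{\eps^2}-\psi_R(x,y)-\sigma|x-\bar{x}|^4,$$
whose maximum point $(x_\eps,y_\eps)$ converges to $\bar{x}$ as $\eps\to 0$, with $|x_\eps-y_\eps|^2/\eps^2\to 0$ and both points strictly inside $\dom$ for $\eps$ small.

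Writing the subsolution inequality for $u$ at $x_\eps$ and the supersolution inequality for $v$ at $y_\eps$ with the natural test-functions extracted from $\Phi_\eps$, subtracting, sending $\delta\to 0$ to kill the smooth $I_\delta$ terms via Lemma~\ref{cor:compensator}, and using the max property of $\Phi_\eps$ at $(x_\eps,y_\eps)$ together with the bound $I[\U_R](x_\eps)\leq K_R$ from $(U)$, one arrives at an inequality of the form
$$u(x_\eps)-v(y_\eps)\leq f(x_\eps)-f(y_\eps)+\xi K_R+\mathrm{I}_Q+o_R(1),$$
where
$$\mathrm{I}_Q=\int_{|z|>0}Q(z)\,d\mu(z),\qquad Q(z)=\frac{|P(x_\eps,z)-P(y_\eps,z)|^2-|x_\eps-y_\eps|^2}{\eps^2},$$
plus integrals involving $\U_R$ and $\psi_R$ that are handled as in Theorem~\ref{fleas}.

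The main obstacle is the control of $\mathrm{I}_Q$, and this is where the censored structure of $\eta$ in $(\mathrm{H}_\eta^6)$ is essential. Wherever $\eta(x_\eps,z)=\eta(y_\eps,z)$ (both equal to $z$ or both equal to $0$) one has $Q(z)\equiv 0$, so $Q$ is supported on the thin slab
$$S_\eps=\bigl\{z:\min(x_{\eps,N},y_{\eps,N})<-z_N\leq\max(x_{\eps,N},y_{\eps,N})\bigr\}.$$
On $S_\eps$ the max-inequality is not sharp when $|z|$ is large (the integrand $|z|^2/\eps^2$ is not $\mu$-integrable a priori), so split $|z|\leq 1$ (use the max inequality; $|z|^2$ is bounded) and $|z|>1$ (bound the $u,v$-increments directly by $2(\|u\|_\infty+\|v\|_\infty)$, integrable since $\mu(\{|z|>1\})<\infty$). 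As $\eps\to 0$ the set $S_\eps$ collapses to the hyperplane $\{z_N=-\bar{x}_N\}$ of zero $\mu$-measure by $(\mathrm{H}_\mu')(ii)$; together with $(\mathrm{H}_\eta^3)$ and dominated convergence, this forces $\mathrm{I}_Q\to 0$. Passing successively to the limits $\eps\to 0$, $\sigma\to 0$, $\xi\to 0$, $R\to\infty$, the remaining terms vanish and we conclude $M\leq 0$, contradicting $M>0$.
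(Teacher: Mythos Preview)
Your overall strategy is correct and mirrors the paper's approach: use the blow-up supersolution $\U_R$ to push the maximum strictly into $\Omega$, double variables, and exploit $(\mathrm{H}_\mu')(ii)$ to kill the mismatch between the censored integration domains. However, there is a genuine gap in your control of $\mathrm{I}_Q$.

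On the slab $S_\eps$, say when $x_{\eps,N}>y_{\eps,N}$, one has $P(x_\eps,z)=x_\eps+z$ but $P(y_\eps,z)=y_\eps$, so
\[
Q(z)=\frac{2(x_\eps-y_\eps)\cdot z+|z|^2}{\eps^2}.
\]
For $|z|\leq 1$ this is of order $\eps^{-2}$, \emph{not} uniformly bounded in $\eps$. Your appeal to dominated convergence therefore fails: there is no $\eps$-independent dominating function, and while $\mu(S_\eps\cap B_1)\to 0$ by $(\mathrm{H}_\mu')(ii)$, no rate is available, so $\eps^{-2}\mu(S_\eps\cap B_1)$ need not vanish. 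For a measure with a bounded density near $\{z_N=-\bar x_N\}$ it is typically of order $|x_{\eps,N}-y_{\eps,N}|/\eps^2=o(\eps)/\eps^2\to\infty$.

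The remedy, and this is precisely what the paper does, is to \emph{not} use the max-point inequality on the slab. Observe directly that on $S_\eps$ the $v$-increment vanishes (the jump is censored), so the contribution to $I^\delta[u](x_\eps)-I^\delta[v](y_\eps)$ is simply
\[
\int_{S_\eps\cap\{|z|>\delta\}}\bigl[u(x_\eps+z)-u(x_\eps)\bigr]\,d\mu(z),
\]
which is bounded by $2\|u\|_\infty\,\mu(S_\eps\cap\{|z|>\delta\})$. Since $S_\eps\subset\{|z_N|\geq\min(x_{\eps,N},y_{\eps,N})\}$ stays bounded away from the origin, this tends to $0$ as $\eps\to0$ by $(\mathrm{H}_\mu')(ii)$ and dominated convergence, with no $\eps^{-2}$ factor. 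The max-point inequality is then used only on the common region $\{z_N\geq -\min(x_{\eps,N},y_{\eps,N})\}$, where $\eta(x_\eps,z)=\eta(y_\eps,z)=z$ and the quadratic $\eps$-term cancels exactly.

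A secondary remark: the paper penalizes symmetrically with $\kappa[\U_R(x)+\U_R(y)]$ rather than your asymmetric $\xi\,\U_R(x)$ plus $\sigma|x-\bar x|^4$ localization. Your variant is workable in principle, but the symmetric version directly forces \emph{both} $\bar x_N,\bar y_N\geq\delta_0>0$ and lets one invoke $(U)$ at both points without an extra localization parameter.
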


As in the previous section, we immediatly get a well-posedness result
for \eqref{E} by Theorems \ref{thm:censor1} and \ref{limit}.

\begin{corollary}
Under the assumptions of Theorem \ref{thm:censor1}, there exists a unique
viscosity solution of \eqref{E} depending continuously on $f$.
\end{corollary}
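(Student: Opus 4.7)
The plan mirrors the corollary that follows Theorem \ref{fleas}: deduce everything from the comparison principle of Theorem \ref{thm:censor1} together with the approximation and half-relaxed-limit machinery of Section \ref{der-pide}. Uniqueness is immediate: two bounded viscosity solutions $u_1,u_2$ are respectively a bounded usc subsolution and a bounded lsc supersolution of \eqref{E}, so Theorem \ref{thm:censor1} applied twice with the roles exchanged gives $u_1=u_2$.

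For existence I would use the truncated problems \eqref{En} with $\mu^k=1_{\{|z|>1/k\}}\mu$. Lemma \ref{lem_uk} yields unique $u_k\in C_b(\domb)$ with $\|u_k\|_\infty\leq\|f\|_\infty$, and the half-relaxed limits
$$\bar u(x)=\limsup_{k\to\infty,\,y\to x}u_k(y),\qquad \underline u(x)=\liminf_{k\to\infty,\,y\to x}u_k(y)$$
are bounded on $\domb$. Since $(\mathrm{H}_\mu')$ falls in the $c=0$ regime of $(\mathrm{H}_\mu)$ and $(\mathrm{H}_\eta^6)$ entails $(\mathrm{H}_\eta^i)$ for $i=0,1,2,3$, Theorem \ref{limit} applies and shows that $\bar u$ is a bounded usc subsolution and $\underline u$ a bounded lsc supersolution of \eqref{E}. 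Theorem \ref{thm:censor1} then forces $\bar u\leq\underline u$; the reverse inequality is trivial, so $u:=\bar u=\underline u$ is a continuous bounded viscosity solution of \eqref{E}.

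For continuous dependence on $f$, I would exploit the fact that $I[\phi+c]=I[\phi]$ for any constant $c$. Let $u_f,u_g$ denote the solutions with data $f,g$ and set $C=\|(f-g)^+\|_\infty$. Then $u_g+C$ is an lsc supersolution of \eqref{E} with right-hand side $g+C$, and $u_f$ is an usc subsolution of the same equation because $u_f-I[u_f]\leq f\leq g+C$. Applying Theorem \ref{thm:censor1} to this pair, both understood as sub-/supersolutions of the problem with common data $g+C$, yields $u_f\leq u_g+C$. Exchanging the roles of $f$ and $g$ gives the reverse bound and hence $\|u_f-u_g\|_\infty\leq\|f-g\|_\infty$.

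The point needing most care is verifying that the approximation step really reduces to the hypotheses of Theorem \ref{thm:censor1}. The blow-up supersolution hypothesis (U) is required only for the limit equation and plays no role in constructing the $u_k$; what must be observed is that under $(\mathrm{H}_\mu')+(\mathrm{H}_\eta^6)$ we sit in the $c=0$ branch of $(\mathrm{H}_\mu)$ and satisfy the $\eta$-hypotheses of Theorem \ref{limit} (note e.g. that $(\mathrm{H}_\mu')(ii)$ is exactly what makes $\eta$ satisfy the weak continuity $(\mathrm{H}_\eta^3)$ for $\mu$-a.e.\ $z$). Consequently no Neumann condition is imposed in the viscosity sense on $\bound$, consistently with the formulation of Theorem \ref{thm:censor1}, where the equation is tested up to the boundary.
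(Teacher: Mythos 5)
Your proposal is correct and follows essentially the same route as the paper, which obtains the corollary directly from Theorem \ref{thm:censor1} combined with the approximation/half-relaxed-limit result of Theorem \ref{limit}, with uniqueness and continuous dependence on $f$ deduced from the comparison principle by the standard constant-shift argument you spell out. Your remark that $(\mathrm{H}_\mu')$ places you in the $c=0$ branch of $(\mathrm{H}_\mu)$ and that $(\mathrm{H}_\mu')(ii)$ is what gives $(\mathrm{H}_\eta^3)$ for the censored $\eta$ is exactly the verification needed to invoke Theorem \ref{limit}.
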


\begin{proof}[Proof of Theorem \ref{thm:censor1}] We argue by contradiction assuming that $M:=\sup_{\overline{\Omega}}\,(u-v)>0$.
Take $R>R_0$ and $0<\kappa \ll 1$. Using $0<\eps \ll 1$, we double the variables by introducing the quantities
\begin{align*}
&\phi(x,y)=
\frac{|x-y|^2}{\eps^2}+\kappa[\U_R(x)+\U_R(y)]+\psi_R(x)+\psi_R(y),\\
&\Phi(x,y)=u(x)-v(y)-\phi(x,y),
\end{align*}
where $\U_R$ is given by (U) and
$\psi_R(x)=2(\|u\|_\infty+\|v\|_\infty)\psi(\frac{|x|}R)$ for an increasing
function $\psi(s)\in
C^\infty(0,\infty)$ which is $0$ in $(0,\frac12)$ and $1$ in $(1,\infty)$.

For any $R, \kappa$ and $\eps$, the function $\Phi$ achieves its maximum at $(\bar x,\bar y)=(\bar
x_{R,\kappa,\eps},\bar y_{R,\kappa,\eps})$ and, by
the definition of $\U_R$ and $\psi_R$, we have
\begin{equation}\label{propbxby}
\bar x_N,\bar y_N \geq \delta_0
=\omega_R^{-1}\Big(\frac\kappa{2(\|u\|_\infty+\|v\|_\infty)}\Big) \qquad\text{and}\qquad|\bar
x|,|\bar y|\leq R.
\end{equation}
These estimates will hold in most of the proof since we are going to
keep  $R$ and $\kappa$ fixed untill the end, sending  $\eps\ra0$ first.
A standard argument also shows that
$$\frac{|\bx-\by|^2}{\eps^2}\ra0\quad\text{as}\quad \eps\ra0.$$
By the estimates on $\bx,\by$ and extracting a
subsequence if necessary, we can assume without
loss of generality that $\bx,\by \to X$, $u(\bx)\to u(X)$, and
$v(\by) \to v(X)$ where $X$ is a maximum point of
$\Phi(x,x)=u(x)-v(x)-\phi(x,x)$.
Finally, when we first send $\kappa \to 0$ and then $R\to +\infty$, we have
$$u(X)-v(X) \to M\quad\hbox{and}\quad \kappa\U_R(X)+\psi_R(X) \to 0\; .$$

Now we write down the viscosity inequalities. Since $u-\phi(\cdot,\bar y)$ has a global maximum at
$\bar x$ and $v-(-\phi(\bar x,\cdot))$ has a global minimum at
$\bar y$, we have that
\begin{align*}
u(\bx)-I^\delta[u](\bx)-I_\delta[\phi(\cdot,\by)](\bx)&\leq f(\bx),\\
v(\by)-I^\delta[v](\by)-I_\delta[-\phi(\bx,\cdot)](\by)&\geq f(\by).
\end{align*}
With this in mind we see that
\begin{align}
&M + o (1)=u(\bx)-v(\by) - \phi(\bx,\by)\nonumber\\
&\leq I^\delta[u](\bx)-I^\delta[v](\by)+I_\delta[\phi(\cdot,\by)](\bx)-I_\delta[\phi(\bx,\cdot)](\by)+f(\bx)- f(\by).\label{Mineq}
\end{align}
In this inequality, we aim at first sending $\delta \to 0$ in order to
get rid of the $\eps$-depending $I_\delta[\phi]$-terms.
%To do so, note that the $I_\delta$-terms  tend to $0$ as
%$\delta\ra0$ if we keep $\eps$ fixed.
In fact $I_\delta[\varphi]\ra0$ as
$\delta\ra0$ by the Dominated Convergence Theorem since
$|\eta(x,z)|\leq c_\eta|z|$,
and hence for any $C^1$-function $\varphi$,
\begin{align*}
&\int_{\R^N}|\varphi(x+\eta(x,z))-\varphi(x)|\
1_{|z|<\delta}d\mu(z) \leq
c_\eta\|D\varphi\|_{L^\infty(B_{c_\eta\delta})}\int_{\R^N} 1_{|z|<\delta}\ |z|\ d\mu(z).
\end{align*}

Next we consider the $I^\delta$-terms. We restrict ourselves to a
subsequence such that $\bx_N\geq\by_N$ (if $\bx_N\leq\by_N$ the
argument is similar). Then
\begin{align*}
I^\delta[u](\bx)-I^\delta[v](\by)&=\int_{-\bx_N<z_N<-\by_N}[u(\bx+z)-u(\bx)]\
1_{|z|>\delta}d\mu(z)\\
&\quad + \int_{-\by_N<z_N}[u(\bx+z)-v(\by+z)-(u(\bx)-v(\by))]\
1_{|z|>\delta}d\mu(z)\\
&=:I_1+I_2.
\end{align*}
For $I_1$, we have
$$|I_1|\leq 2\|u\|_{\infty}\int_{|z|>\delta}
1_{\{-\bar x_N<z_N<-\bar y_N\}}(z) \ d\mu(z)\; .$$
Keeping $\kappa$ and $R$ fixed and recalling (\ref{propbxby}), we see that this integral is independent of $\delta$ as soon as $\delta <\delta_0$. Furthermore, because of $(\mathrm{H}_\mu')\  (ii)$, the Dominated Convergence Theorem implies that
$$ I_1 \ra 0\quad\text{as}\quad\eps\ra0$$
since $|\bx-\by|\ra0$ as $\eps\ra0$.

For $I_2$, we use the maximum point property for $\bx,\by$,
$$ \big(u(\bx+z)-v(\by+z)\big)-\big(u(\bx)-v(\by)\big) \leq \phi(\bx+z,\by+z)-\phi(\bx,\by)\; ,$$
which after cancellation of the $\eps$-terms leads to
$$ I_2 \leq  \kappa\Big( I^\delta[\U_R](\bx)+I^\delta[\U_R](\by)\Big) + \Big(I^\delta[\psi_R](\bx)+I^\delta[\psi_R](\by)\Big)\; .$$
Recalling again (\ref{propbxby}) and using the regularity of $\U_R$ and $\phi$, we can send $\delta\ra0$ and obtain
$$\limsup_\delta \,I_2 \leq \kappa\Big( I[\U_R](\bx)+I[\U_R](\by)\Big) + \Big(I[\psi_R](\bx)+I[\psi_R](\by)\Big)\; ,$$
where each term on the right-hand side have a sense.

Consider equation (\ref{Mineq}) again. Using all the previous
estimates, we can send $\delta\ra0$ first and obtain using (U) for
the $\U_R$-terms that
$$
M + o (1) \leq 2K_R\kappa + (I[\psi_R](\bx)+I[\psi_R](\by))+ (f(\bx)-f(\by))\; .$$
In this inequality, we can first send $\eps \to 0$, keeping $R$ and
$\kappa$ fixed. Then $f(\bx)-f(\by)\ra 0$ as $\eps\ra0$
since $f$ is uniformly continuous in $\overline{B}_R$, and we find that
$$
M + o (1) \leq 2K_R\kappa + 2I[\psi_R](X)\; .$$
We conclude by first sending $\kappa \to 0$ and then $R\to +\infty$.
\end{proof}

\section{Comparison results in the censored case II.}
\label{sec:sens2}

In this section we give comparison and well-posedness results for the
initial value problem \eqref{E} in the censored case (under assumption
$(\mathrm{H}_\eta^6)$) when the measure $\mu$ is very
singular

\smallskip
\noindent $(\mathrm{H}_\mu'')$ Hypothesis $(\mathrm{H}_\mu)$ holds with
$$\mu_*(dz)=\dfrac{dz}{|z|^{N+\alp}}\;,\;\int_{\R^N}(1\wedge|z|^\tibeta
)\,\mu_\#(dz)<\infty \; , \; \int_{\{z_N=a\}}\mu_\#(dz) = 0\;\hbox{for any  }a<0\; ,
$$
for $\alp\in(1,2)$ and $\tibeta := \alp -1$.
\smallskip

This assumption is much more restrictive than $(\mathrm{H}_\mu)$, and
the results of this section are not completely satisfactory. We had
lot of difficulties to obtain comparison results  because on one
hand, it is not possible to get rid of the boundary and the boundary
condition in such a general way as we did in the less singular case
I.  On the other hand a lot of technical difficulties come from the
the way the $x$-depending domain of integration in  $I$ interferes
with the singularity of the measure and the boundary.

Our first result is the following

\begin{theorem}\label{thm:comp.censored.reg} Assume
  $(\mathrm{H}_f)$, $(\mathrm{H}_\eta^6)$, and $(\mathrm{H}_\mu'')$
  hold.

\noindent(a) Let $u$ and $v$ be respectively a bounded usc subsolution
and a bounded lsc supersolution of
\begin{equation}\label{eqn-in}
 w(x)-I[w](x)=f(x) \quad\text{in}\quad \dom\; ,
\end{equation}
and let us also denote by $u$ and $v$ respectively their usc or lsc
extensions to $\domb$\footnote{For any $x' \in \R^{N-1}$,
  $\displaystyle u(x',0) :=\limsup_{(y',y_N) \to (x',0)}u(y',y_N)$ and
  $\displaystyle v(x',0) :=\liminf_{(y',y_N) \to (x',0)}v(y',y_N)$}.
If there exists $C>0$ and $\beta>\tibeta$ such that
\begin{equation}\label{cond-0}
u(x',x_N) \geq u(x',0) -Cx_N^\beta \quad\hbox{and}\quad v (x',x_N) \leq v(x',0) +Cx_N^\beta
\end{equation}
then $u$ and  $v$ are respectively a bounded usc subsolution and a
bounded lsc supersolution of \eqref{E}.

% (ii) If $u$ is a bounded usc subsolution of \eqref{eqn-in}  with right-hand side $f \in BUC (\domb)$ and if $v$ is a bounded lsc supersolution of \eqref{eqn-in} with right-hand side $g \in BUC (\domb)$, then $w:= u-v$ is a bounded usc subsolution with right-hand side $f-g$. In particular, the same result holds for sub and supersolution of \eqref{E} provided that they satisfy \eqref{cond-0}.\\
\noindent(b) If $u$ and $v$ are respectively a bounded usc subsolution
and a bounded lsc supersolution of \eqref{E} satisfying
\eqref{cond-0}, then
$$u\leq v\quad\text{in}\quad\domb.$$
In particular, there exists at most one solution
of \eqref{E} in $C^{0,\beta}(\dom)$ for $\beta>\tibeta$.
\end{theorem}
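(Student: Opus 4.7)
The plan for part (a) is to show that if $u$ is a bounded usc subsolution of the interior equation \eqref{eqn-in} satisfying the one-sided Hölder bound \eqref{cond-0}, then its usc extension to $\domb$ is a viscosity subsolution of \eqref{E}; the supersolution case is symmetric. The interior viscosity inequality is immediate from \eqref{eqn-in}, so the only substantive content is the Neumann condition: at a max point $x^*\in\partial\dom$ of $u-\phi$ in $B(x^*,c_\eta\delta)\cap\domb$ with $\phi\in C^2(\R^N)$, one must have $-\partial\phi/\partial x_N(x^*)\leq 0$. I argue by contradiction, assuming $\partial\phi/\partial x_N(x^*)=-\theta_0<0$. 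Push the max inside by considering
\[
\Phi_\eps(y) = u(y) - \phi(y) - \eps/y_N,
\]
smoothly cut off near its eventual interior max point so as to produce a bona fide $C^2$ test function. Since the penalty blows up at $\partial\dom$, the max $y_\eps$ of $\Phi_\eps$ lies in $\dom$; comparing $\Phi_\eps(y_\eps)$ with $\Phi_\eps(x^*+\eps^\gamma e_N)$ for a suitable $\gamma\in(0,1)$ and using the Hölder lower bound \eqref{cond-0}, one obtains $y_\eps\to x^*$ with $\eps/y_{\eps,N}\to 0$.

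Applying the interior subsolution inequality at $y_\eps$, the crucial quantity is the $\mu_*$-compensator. By symmetry of $\mu_*$ and $(\mathrm{H}_\eta^6)$,
\[
\mathrm{P.V.}\!\int_{|z|<\delta} D\phi(y_\eps)\cdot\eta(y_\eps,z)\,d\mu_*(z) = \frac{\partial\phi}{\partial x_N}(y_\eps)\cdot A_\eps,\qquad A_\eps := \int_{y_{\eps,N}<|z|<\delta,\,z_N<-y_{\eps,N}}(-z_N)\,d\mu_*(z),
\]
and since $\mu_*(dz)=dz/|z|^{N+\alp}$ with $\alp>1$, a direct computation gives $A_\eps\sim y_{\eps,N}^{1-\alp}\to+\infty$. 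As $\partial\phi/\partial x_N(y_\eps)\to-\theta_0<0$, this term diverges to $-\infty$, while the remaining pieces---the smooth error $\tilde I_\delta[\phi]$, the $\mu_\#$-compensator, the bounded $I^\delta[u]$, and the penalty's compensator contribution (controlled by tuning $\gamma$)---stay uniformly bounded or nonpositive. This contradicts the subsolution inequality $I_\delta[\,\cdot\,](y_\eps)+I^\delta[u](y_\eps)\geq u(y_\eps)-f(y_\eps)$.

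For part (b), I adapt the doubling-of-variables proof of Theorem \ref{fleas} to the censored setting, in which the normal contraction $(\mathrm{H}_\eta^5)$ is unavailable. Assuming $M:=\sup_\domb(u-v)>0$ for contradiction, consider
\[
\Phi_{\eps,\kappa,R}(x,y) = u(x) - v(y) - \frac{|x-y|^2}{\eps^2} - \psi_R(x,y) - \kappa[\U(x)+\U(y)],
\]
where $\psi_R$ is the localization of Theorem \ref{fleas} and $\U\in C^2(\domb)\cap L^\infty(\domb)$ is a \emph{bounded} analogue of the blow-up supersolution $\U_R$ of Theorem \ref{thm:censor1}: $\U(x',0)>0$, $\U(x)$ decreases on scale $x_N^\beta$ away from the boundary, and $-I[\U]\geq-K$ in $\dom$. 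By the Hölder bound \eqref{cond-0}, $(u-v)(x)$ decreases by at most $O(x_N^\beta)$ as one moves a distance $x_N$ away from $\partial\dom$; therefore the bonus saved by moving inside, $\kappa[\U(x',0)-\U(x)]\sim\kappa x_N^\beta$, dominates for $\kappa$ large relative to the Hölder constant $C$, forcing the max $(\bar x,\bar y)$ of $\Phi$ to lie strictly inside $\dom\times\dom$. One then writes the sub/supersolution inequalities at $(\bar x,\bar y)$, estimates the principal-value integrals via the symmetry of $\mu_*$ and $(\mathrm{H}_\eta^6)$ as in Theorem \ref{fleas}, and passes successively to the limits $\eps\to 0$, $\kappa\to 0$, $R\to\infty$ to conclude $M\leq 0$.

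The main obstacle is the part (b) analysis of the $\mu_*$-integrals that arise from the doubling penalty $|x-y|^2/\eps^2$. Without $(\mathrm{H}_\eta^5)$, the normal component $|\eta(\bar x,z)_N-\eta(\bar y,z)_N|$ cannot be bounded by $|\bar x_N-\bar y_N|$; one must separately handle the ``straddling region'' $\{-\bar x_N<z_N<-\bar y_N\}$ where the jump is censored for one argument but not the other. Combining \eqref{cond-0} with the fact that this region has $\mu_*$-mass of order $|\bar x_N-\bar y_N|\cdot r^{-\alp}$ on the scale $|z|\sim r$, one checks that the associated integral remains uniformly bounded as $\eps,\kappa\to 0$ precisely when $\beta>\tibeta=\alp-1$; this threshold is the minimal regularity making $\int_{|z|<1}|z|^\beta\,d\mu_*(z)<\infty$, and it is exactly where the hypothesis \eqref{cond-0} is used.
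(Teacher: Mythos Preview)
Your part (a) argument has a genuine gap in the choice of penalty. The function $\eps/y_N$ is singular at $\partial\dom$, and in the censored case $I_\delta$ sees this singularity: for $\delta>y_{\eps,N}$ the integrand $\eps/(y_{\eps,N}+z_N)-\eps/y_{\eps,N}$ blows up as $z_N\to(-y_{\eps,N})^+$ and the resulting integral against $d\mu_*$ diverges to $+\infty$. Then $I_\delta[\text{test}]=+\infty$ and the subsolution inequality is trivially satisfied, so no contradiction arises. If instead you take $\delta<y_{\eps,N}$, then $\eta(y_\eps,z)=z$ on $\{|z|<\delta\}$, the $\mu_*$-compensator vanishes by symmetry, and the term you want to blow up stays bounded. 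Your remark about ``tuning $\gamma$'' does not address this, because the estimate $\eps/y_{\eps,N}\to0$ gives no control on $\eps/y_{\eps,N}^2$ or on the full $I_\delta$ of the penalty. The paper avoids this entirely by perturbing with $\kappa\,\theta(x_N)$ where $\theta(t)=t^{\tibeta}\wedge1$: the point is the exact identity $I[x_N^{\tibeta}]=0$ for $\mu_*(dz)=dz/|z|^{N+\alp}$ (Lemma~\ref{alp-lem}), so the perturbation contributes only $O(\kappa)$ to the nonlocal term, while the H\"older hypothesis \eqref{cond-0} with $\beta>\tibeta$ still forces the max inside. After that one divides by $|\gamma(x_\kappa)|\to\infty$ exactly as in the derivation of the boundary condition in Theorem~\ref{limit}.

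For part (b) your approach is far heavier than needed and the sketch is not convincing as it stands. The paper exploits linearity and part (a): $w:=u-v$ is then a subsolution of \eqref{E} with $f\equiv0$, and one simply compares $w$ with the smooth function $\chi_{R,\nu}(x)=\psi(|x_N|/R)+\psi(|x'|/R)-\nu\,\mathrm{d}(x_N)$, which is a strict Neumann supersolution with right-hand side $\varpi(R,\nu)\to0$. The max of $w-\chi_{R,\nu}$ is forced into the interior by the $-\nu\,\mathrm{d}$ term, and the viscosity inequality there gives $w\leq\chi_{R,\nu}-\varpi(R,\nu)$; send $R\to\infty$, $\nu\to0$. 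No doubling of variables, no ``straddling region'' analysis, and no construction of a bounded $\U$ with $-I[\U]\geq -K$ (which you assert but do not build). Your scheme of a bounded boundary penalty $\U$ satisfying $-I[\U]\geq -K$ is in fact delicate for the same reason as in (a): the natural candidates either fail the supersolution inequality or do not combine with \eqref{cond-0} to push the max inside.
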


Several comments have to be made on the different statements in
Theorem~\ref{thm:comp.censored.reg}. Part (a) means that, for sub and
supersolutions having a suitable regularity at the boundary, the
Neumann boundary condition is already encoded in the equation
inside. This might be expected from the proof of Theorem~\ref{limit}
or from the intuition coming from the censored process. But the result
is not true in general since we need anyway \eqref{cond-0} to prove
it.

% Result (ii) is clear from Result (i) since the linearity --which is never obvious in the framework of viscosity solutions-- holds inside the domain and then is (in some sense) extended to the boundary by Result (i).

Unfortunately part (b) does not provide the full comparison result
for semi-continuous solutions, and  we do not know if this result is
optimal or not. Of course, in view of Theorem~\ref{thm:comp.censored.reg}
(b), it is clear that we need a companion existence result providing the
existence of solutions satisfying \eqref{cond-0} or belonging to
$C^{0,\beta}(\dom)$ for $\beta>\tibeta$. We address this
question after the proof of Theorem~\ref{thm:comp.censored.reg}.

%\added[ERJ]{I gone over the proof, corrected minor misstakes, and
 % tried to make it easier to
%  understand -- please check that I did not introduce any errors.}
\begin{proof}
We prove (a) only in the subsolution case since the
supersolution case is analogous. Let $\phi$ be a smooth function
which is bounded and has bounded first and second-order derivatives
and assume that $u-\phi$ has a maximum point $(x',0)\in\del\dom$
 in $B((x',0),c_\eta\delta)\cap\domb$. We may assume
that the maximum is strict and global without any loss of generality.
% We may also assume that $\limsup_{x_N \to \infty}
%(u(x)-\phi(x)) < (u-\phi)(x',0)$.

We set $\theta(t) = t^\tibeta \wedge 1$  for $t\geq 0$ and, for $0<\kappa \ll 1$, we consider
the function $u(x)-\phi(x) +\kappa \theta(x_N)$. By standard
arguments, using the properties of $\phi$, this function
achieves a global maximum at a point nearby $(x',0)$, and we claim that
this point cannot be on $\del\dom=\{x:\ x_N = 0\}$. Indeed, otherwise
it would have to be $(x',0)$, the strict global maximum point
of $u-\phi$ on $\del\dom$. But then by \eqref{cond-0},
$$ u(x',0) - \phi(x',0) \geq u(x)-\phi (x)+\kappa \theta(x_N) \geq  u(x',0) - \phi(x',0) - 2Cx_N^\beta +\kappa \theta(x_N),$$
and we have a contradiction since $\beta > \tibeta$ and hence $-
2Cx_N^\beta +\kappa \theta(x_N)>0$ for $x_N$ small enough.

Therefore the function $x\mapsto u(x)-\phi (x)+\kappa \theta(x_N)$ has a maximum point at $x_\kappa$ with $(x_\kappa)_N >0$.
We may write the viscosity inequality at $x_\kappa$ as
$$u(x_\kappa)-\tilde I_\delta[\phi](x_\kappa)-\gamma(x_\kappa)\cdot
D\phi(x_\kappa) + \kappa I_\delta[ \theta] (x_\kappa)-I^\delta[u](x_\kappa)\leq f(x_\kappa),$$
for (say) $0<\delta <1$, where $\gamma(x_\kappa)=P.V.\int_{|z|<\delta}\eta(x_\kappa,z)\mu(dz)$.

We first consider the term $\kappa I_\delta[ \theta] (x_\kappa)$. On one hand, the $\mu_\#$-part
is $O(\kappa)$ since $\theta$ is in $C^{0,\tibeta}$ and
$(\mathrm{H}_\mu'')$ holds. On the other hand, the singular part (the $\mu_*$ part) is nothing but
$$\kappa\  \mathrm{P.V.}\int_{|z|\leq \delta \atop x_N + z_N \geq 0 }\theta(x_N+z_N)-\theta(x_N) \
\frac{dz}{|z|^{N+\alp}}\; ,$$
where we have dropped the subscript $\kappa$ to simplify the notation.
Since $\delta <1$ and $x_N \to 0$ as $\kappa\ra0$, we may assume that
$0 \leq x_N+z_N <1$ for $|z|\leq \delta$, and hence that the principal
value reduces to
$$\kappa\  \mathrm{P.V.}\int_{|z|\leq \delta \atop x_N + z_N \geq 0 }|x_N+z_N|^\tibeta-|x_N|^\tibeta \
\frac{dz}{|z|^{N+\alp}}\; .$$
By the computations of Lemma \ref{alp-lem} in the Appendix,
$$ - \mathrm{P.V.}\int_{x_N + z_N \geq 0}|x_N+z_N|^\tibeta-|x_N|^\tibeta \
\frac{dz}{|z|^{N+\alp}} = 0$$
for $x_N >0$. Writing
$$\kappa\  \mathrm{P.V.}\int_{|z|\leq \delta \atop x_N + z_N \geq 0 } (\cdots)
 = \kappa\  \mathrm{P.V.}\int_{x_N + z_N \geq 0 }(\cdots) \
 -\kappa\ \int_{|z| > \delta \atop x_N + z_N \geq 0 }(\cdots) \
\; ,$$
we conclude that for fixed $\delta$,
$$\kappa\  \mathrm{P.V.}\int_{|z|\leq \delta \atop x_N + z_N \geq 0 }\theta(x_N+z_N)-\theta(x_N) \
\frac{dz}{|z|^{N+\alp}} = O(\kappa)\; .$$

Finally, the $u$, $\tilde
I_\delta$, and $I^\delta$ terms are uniformly bounded in $\kappa$ while
$\gamma(x_\kappa)\ra\infty$ since $(x_\kappa)_N\ra0$. We divide
the above inequality by $|\gamma(x_\kappa)|$ and send
$\kappa\ra0$. As in the proof of Theorem~\ref{limit} -- the second part, when
$x\in\partial\Omega$ and $c=1$ -- the result is that all terms vanish
except the $\gamma$-term and we are left with the boundary condition
$$\frac{\partial\phi}{\partial\mathbf{n}}(x)\leq0\,.$$

Now we prove part (b). By linearity of the problem and part (a),
the function $w=u-v$ is a subsolution of \eqref{E} with $f \equiv
0$, and we are done if we can prove that $w\leq0$. To prove this, we
consider the function
$$ \chi_{R,\nu}(x) := \psi(|x_N|/R)+\psi(|x'|/R) - \nu\mathrm{d} (x_N)\; ,$$
where $\psi$ and $\mathrm{d}$ are defined as in the proof of
Theorem~\ref{fleas}, replacing, in the case of $\psi$,
$2(\|u\|_\infty+\|v\|_\infty + 1)$ by $2\|w\|_\infty + 1$. The
function $\chi_{R,\nu}$ is smooth and easy computations show that
$\chi_{R,\nu}$ is a supersolution of \eqref{E} with an $f\geq\varpi(R,\nu)$
where $\varpi(R,\nu)$ converges uniformly to $0$ as $R\to \infty$ and
$\nu \to 0$. At the boundary $\del\dom$,
$$ - \frac{\partial \chi_{R,\nu}}{\partial x_N} =0+\nu\cdot 1>0
.$$%\quad\text{at}\quad\del\dom.$$
Because of the behavior of $\chi_{R,\nu}$ at infinity, the function
$w-\chi_{R,\nu}$ achieves its maximum at some point $x$,
and because of the behaviour of $\chi_{R,\nu}$ at the boundary,
$x_N>0$. Writing the viscosity subsolution inequality then yields that
$$w(x) -\chi_{R,\nu}(x)\leq
-\chi_{R,\nu}(x)+I[\chi_{R,\nu}](x)+I^\delta[u-\chi_{R,\nu}](x)\leq
-\varpi(R,\nu)+0,$$
where we have used that $I^\delta[\psi](x)\leq 0$ at any maximum point
$x$ of $\psi$. Hence, for any $y \in \domb$,
$$  w(y)-\chi_{R,\nu}(y) \leq -\varpi(R,\nu),$$
and part (b) follows from sending $R\to \infty$ and then $\nu \to 0$.
\end{proof}

Now we turn to the existence of H\"older continuous solutions and we begin with a result in $1$-d.

\begin{theorem}\label{exist1}  Assume $N=1$ and that
  $(\mathrm{H}_f)$, $(\mathrm{H}_\eta^6)$, and $(\mathrm{H}_\mu'')$
  hold.

\noindent (a) Any bounded, uniformly continuous solution of \eqref{E} is in
$C^{0,\beta}(\domb)$ for some $\beta >\tibeta$.
%\added[ERJ]{Why is {\ul{uniform}} continuity need here?}

\noindent (b) There exists a solution of \eqref{E} in
$C^{0,\beta}(\domb)$ for some $\beta >\tibeta$.
\end{theorem}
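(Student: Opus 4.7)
The plan is to prove (a) via a doubling-of-variables H\"older regularity argument, then deduce (b) by running the same estimate along the approximation of Section \ref{der-pide}.

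For part (a), fix $\beta\in(\tibeta,\alpha\wedge 1)$ and introduce the auxiliary function $\Phi(x,y)=u(x)-u(y)-L\omega(|x-y|)$, where $\omega(t)=t^\beta$ for small $t$, smoothly extended and bounded for large $t$. I would argue by contradiction: if $u$ is not $C^{0,\beta}$ with any uniform constant, then for arbitrarily large $L$ one has $\sup\Phi>0$, and after adding a small localisation term the supremum is attained at some $(\bar x,\bar y)$ with $\bar x\neq\bar y$. Writing the viscosity subsolution inequality for $u$ at $\bar x$ with test function $\phi_1(\cdot)=L\omega(|\cdot-\bar y|)+\mathrm{const}$, and the supersolution inequality for $u$ at $\bar y$ with test function $\phi_2(\cdot)=-L\omega(|\bar x-\cdot|)+\mathrm{const}$, and subtracting, yields
$$
 Ls^\beta < u(\bar x)-u(\bar y) \leq I[\phi_1](\bar x)-I[\phi_2](\bar y) + f(\bar x)-f(\bar y),
$$
where $s:=|\bar x-\bar y|$. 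Splitting $\mu=\mu_*+\mu_\#$, the $\mu_\#$-part is controlled by $CL\int(1\wedge|z|^\beta)\,d\mu_\#<\infty$ precisely because $\beta>\tibeta$ and $|z|^\beta\leq|z|^{\tibeta}$ on $\{|z|\leq 1\}$, while the $f$-term is $o(1)$ by uniform continuity of $f$. The singular symmetric part, by the scaling $z=sw$, produces a term of order $Ls^{\beta-\alpha}K(\beta,\alpha)$, so the crux of the proof reduces to identifying the sign of $K(\beta,\alpha)$ for some $\beta$ just above $\tibeta$. The key analytic input is Lemma \ref{alp-lem}, which already shows $K(\tibeta,\alpha)=0$; a first-order expansion in $\beta-\tibeta$ should then pin down the sign for $\beta$ close to but strictly greater than $\tibeta$, and this forces the contradiction for $L$ large, giving the $C^{0,\beta}$ bound.

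For part (b), use the truncated problems $u_k-I_{\mu^k}[u_k]=f$ from Section \ref{der-pide} with $\mu^k=1_{\{|z|>1/k\}}\mu$. Each $u_k$ is a bounded pointwise solution with $\|u_k\|_\infty\leq\|f\|_\infty$ (Lemma \ref{lem_uk}) and is regular since $\mu^k$ is bounded. I would run the argument of (a) on $u_k$ with $\mu^k$ in place of $\mu$; using the uniform asymptotics $(\mathrm{H}_\mu^1)$--$(\mathrm{H}_\mu^3)$ and the fact that $K(\beta,\alpha)$ is computed from the full singular measure $\mu_*$ (and is approached from below along the truncation), the resulting H\"older constant $L_0$ is uniform in $k$. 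An Arzel\`a-Ascoli extraction then gives a subsequence converging locally uniformly to a limit $u\in C^{0,\beta}(\domb)$, and the half-relaxed limit procedure of Theorem \ref{limit} guarantees that $u$ is a viscosity solution of \eqref{E}. Since $\beta>\tibeta$, condition \eqref{cond-0} holds, so the Neumann condition is automatically encoded in the equation via Theorem \ref{thm:comp.censored.reg}(a).

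The main obstacle I anticipate is isolating the sign of $K(\beta,\alpha)$ near the critical value $\beta=\tibeta$ where it vanishes, and controlling the asymmetry introduced by censoring when $\bar x$ or $\bar y$ lies within distance $O(s)$ of $\partial\Omega$: in that regime part of the symmetric cancellation behind $K$ is lost, and the boundary corrections must be absorbed either by the localisation penalty (pushing the maximum points into the interior) or by a refined version of the computation in Lemma \ref{alp-lem} applied to the one-sided integration domain $\{z:\bar x+z\ge 0\}$.
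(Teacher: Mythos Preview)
Your outline for (a) has the right architecture, but the step you flag as ``the main obstacle'' is in fact the entire content of the proof, and neither of your proposed fixes works. The censoring makes the integration domains for the sub- and supersolution inequalities different: with $x<y$ (say), the subsolution integral runs over $\{z\ge -x\}$ while the supersolution one runs over $\{z\ge -y\}$, so after subtraction you are left with a residual term
\[
J=-\int_{-y}^{-x}\big(u(y+z)-u(y)\big)\,d\mu(z)
\]
that does \emph{not} scale like your $K(\beta,\alpha)$ and cannot be ``pushed away'' by localisation, since nothing prevents $x$ from being $O(s)$ close to the boundary. The paper resolves this by a dichotomy: if $2x\le y$ one checks via the maximum-point inequality that $J\ge 0$ and simply drops it, choosing asymmetric $\delta=x$, $\delta'=y-x$; if $2x>y$ one keeps $J$ and, after the scaling $z=(y-x)w$, obtains not a single constant $K(\beta,\alpha)$ but the sum $B(a)+G$ with $a=x/(y-x)>1$, whose sign is settled by Corollary~\ref{nec-res}. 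Lemma~\ref{alp-lem} alone is not enough: it computes the full-line integral, whereas here the boundary cuts off part of the domain in an $a$-dependent way, and the point is precisely that $B(a)+G\le-\kappa<0$ for some $\beta>\tibeta$ uniformly in $a>1$.

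Your argument for (b) has a genuine gap. Running the estimate of (a) on the truncated problems with $\mu^k=1_{\{|z|>1/k\}}\mu$ does \emph{not} give a $C^{0,\beta}$ bound uniform in $k$: the good term $-k_1C|x-y|^{\beta-\alpha}$ comes from integrating the singular part of $\mu_*$ down to scale $|x-y|$, and once $|x-y|\lesssim 1/k$ the truncation kills exactly the scales that produce it, so the constant blows up as $k\to\infty$. The paper circumvents this with a four-step ``Sirtaki'' bootstrap: add both a truncation $T_R$ of the nonlocal term and a local viscous regularisation $-\epsilon u_{xx}$; first the viscous term alone (with the nonlocal part bounded by $2R$) gives a $C^{0,\beta}$ bound uniform in $n$, allowing $n\to\infty$; then, using that $T_R$ is increasing and $1$-Lipschitz together with the key estimate $I[u](x)-I[u](y)\le -k_1C|x-y|^{\beta-\alpha}+k_2(1+C)$, one removes the $R$-dependence; finally the roles of the local and nonlocal terms are exchanged to remove the $\epsilon$-dependence. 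Without the auxiliary $-\epsilon u_{xx}$ there is no way to launch the bootstrap.
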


\begin{proof} (a) To prove the H\"older reglarity we consider
\begin{equation}\label{themax}
M = \sup_{[0,+\infty)\times [0,+\infty)}\, (u(x)-u(y) - C|x-y|^\beta) \; ,
\end{equation}
and argue by contradiction assuming that $M>0$. The aim is to show
that this is impossible for $C>0$ large enough.  A rigorous proof
would consists in introducing localization terms like the $\psi$-terms
in the proof of Theorem~\ref{fleas} and $\mathrm{d}_\nu$-terms in
order to take care of the Neumann boundary condition. We drop these
terms for the sake of simplicity in order to emphasize the main idea
and not loose the reader in technicalities.
%{\color{purple}
%Uniform continuity needed here ?}\\

Therefore we assume that the above supremum is achieved at $(x,y)$
with $x,y>0$.
% Indeed, the $\mathrm{d}_\nu$-term implies that the
%boundary condition in Definition~\ref{def1} cannot hold.
Since $M>0$ we have $x \neq y$, and we assume below $x<y$. The other
case can be treated analogously.
 To simplify the notation, we introduce the
function $\phi(z):=C|x-y+z|^\beta$. Note that this function is
concave in the intervals $(-\infty, y-x)$ and $(y-x,+\infty)$, and
that it is smooth in $(-\delta,\delta )$
for $\delta\leq y-x$ so that it can be used as a test
function.
 By the maximum point property for $(x,y)$,
$$ u(x+z_1)-u(y+z_2) - C|x-y+(z_1-z_2)|^\beta \leq u(x)-u(y) - C|x-y|^\beta\; ,$$
for $z_1\geq -x$ and $z_2 > -y$, and hence
\begin{align}
 u(x+z)-u(y+z) - [ u(x)-u(y)]&\leq0 \qquad\text{for}\qquad z\geq -x\ (> -y),\label{m1}\\
u(x+z)-u(x) & \leq  [\phi(z) - \phi(0)] \quad\text{for}\quad z\geq -x,\label{m2}\\
u(y+z)-u(y) &\geq - [\phi(-z) - \phi(0)] \quad\text{for}\quad z\geq
-y.\label{m3}
\end{align}

Using the definition of viscosity solution and the symmetry of the measure $\mu_*$, for $\delta, \delta' >0$
small enough, we have the inequalities
$$-(I_\delta[\phi]+I^\delta[u])(x)+u(x)\le f(x)\quad\text{and}\quad -(I_{\delta'}[\phi]+I^{\delta'}[u])(y)+u(y)\ge f(y),$$
which reduce here to
\begin{align}
   &-\int_{-x}^{-\delta} (u(x+z)-u(x))d\mu(z)-\int_{-\delta}^{\delta}
   [\phi(z) - \phi(0)-\phi'(0)z]d\mu(z) \label{vi1}\\
   &\quad -\int_{-\delta}^{\delta} \phi'(0)z d\mu(z)-\int_{\delta}^{+\infty} (u(x+z)-u(x))d\mu(z) + u(x) \leq f(x)\; ,\nonumber
\end{align}
\begin{align}
&-\int_{-y}^{-\delta'} (u(y+z)-u(y))d\mu(z)  +
\int_{-\delta'}^{\delta'} [\phi(-z) - \phi(0)+
\phi'(0)z]d\mu(z)\label{vi2}\\
&\quad
-\int_{-\delta'}^{\delta'} \phi'(0)z d\mu(z) -\int_{\delta'}^{+\infty} (u(y+z)-u(y))d\mu(z) + u(y) \geq f(y)\; .\nonumber
\end{align}

In the proof below we will subtract these inequalities and the main
difficulty of the proof will come from the term
$$J:=-\int_{-y}^{-x} (u(y+z)-u(y))d\mu(z)\,$$
which is {\em not} a difference of terms from  \eqref{vi1} and \eqref{vi2}.
Indeed the domain of integration $z\in(-y,-x)$ appears in inequality
 \eqref{vi2} but not in  \eqref{vi1}. Because of the singularity of
 $\mu$, if $x$ is close to $0$ it is not obvious how to get an
 estimate for $J$ which is independent of $C$, or how to control this
 ``bad term'' by a good
term. Therefore we have problems with this term if $x\to 0$ when $C\to
+\infty$. For the $\mu_\#$-part of $J$ there is no problem, we can use
\eqref{m3} to see that
\begin{equation*}\label{mudiese}
-\int_{-y}^{-x} (u(y+z)-u(y))d\mu_\#(z) \leq \int_{-y}^{-x} [\phi(-z) - \phi(0)] d\mu_\#(z) \leq C\int_{-y}^{-x} |z|^\beta d\mu_\#(z)\; ,
\end{equation*}
and we will see later that this term can be controlled since
$|z|^\beta$ is  $\mu_\#$-integrable.
\medskip

\noindent\textsc{First case --}
We first consider the case when $x \leq y-x$, or equivalently, $2x \leq y$. In
this case $J\geq0$ and can be dropped from inequality \eqref{vi2}. To
see this we note that for $-y \leq z \leq -x$,
$$ 2x-y \leq x-y-z \leq x$$
with  $x\leq y-x$ and $2x-y=-(y-x)+x \geq -(y-x)$, and hence by \eqref{m3}
\begin{align}\label{m4}u(y+z)-u(y) \geq - [\phi(-z) - \phi(0)] = |x-y|^\beta -
  |x-y-z|^\beta \geq 0\; .
\end{align}

In this first case, we choose $\delta = x$ and $\delta' =y- x$ and
subtract the viscosity inequalities \eqref{vi1} and \eqref{vi2}. After
some computations using \eqref{m2}, \eqref{m3}, and \eqref{m4}, and
dropping the $J$ term, we are lead to the inequality
$$ \begin{aligned}
&-\int_{-x}^{y-x} [\phi(z) + \phi(-z)- 2\phi(0)]d\mu(z) \\
&-\int_{y-x}^{+\infty} ((u(x+z)-u(y+z))-(u(x)-u(y)))d\mu(z) + u(x)-u(y) \leq f(x)-f(y)\; .
\end{aligned}$$
Some easy computations then shows that the first integral equals
$$ -C(y-x)^{\beta-\alp} \int_{-\frac{x}{y-x}}^{1} [|1+z|^\beta + |1-z|^\beta- 2]\frac{dz}{|z|^{1+\alp}} + O(C)\; ,$$
where the $O(C)$-term comes from the $\mu_\#$ part of the measure since the
integrand can be estimated by $2|z|^\beta$ which is integrable on,
say, $(-1,1)$. The second integral is nonpositive by \eqref{m1} and
can be dropped because of the ``$-$'' in front.

Finally, since $f$ is
bounded and $u(x)-u(y) \geq 0$ (by assumption), we obtain
 \begin{align}
\label{m5}
-C(y-x)^{\beta-\alp} \int_{-\frac{x}{y-x}}^{1}
   [|1+z|^\beta + |1-z|^\beta- 2]\frac{dz}{|z|^{1+\alp}} \leq
   2\|f\|_\infty + O(C)\; .
\end{align}

In order to conclude, we use that
$M=u(x)-u(y)-C|x-y|^\beta >0$ (by assumption) and $\beta\le 1\le
\alpha$ to find that
$$ |x-y|\leq \Big(\frac{2\|u\|_\infty}{C}\Big)^{1/\beta}\quad \text{and}\quad C(y-x)^{\beta-\alp} \geq KC^{\zeta}\; ,$$
where $\zeta:= 1+(\alp-\beta)\beta^{-1} >1$ and $K=(2 ||u||_\infty)^{ \frac {\beta-\alpha} \beta}$. Then we note that
$$-\int_{-\frac{x}{y-x}}^{1} [|1+z|^\beta + |1-z|^\beta- 2]\frac{dz}{|z|^{1+\alp}} \geq
-\int_{0}^{1} [|1+z|^\beta + |1-z|^\beta- 2]\frac{dz}{|z|^{1+\alp}}>0\; ,$$
since $z\mapsto |1+z|^\beta$ is strictly concave on $(-1,1)$. From inequality
\eqref{m5} we then find that
$$\tilde K C^{\zeta} \leq 2\|f\|_\infty + O(C)\;,$$
which cannot hold for $C$ large enough and we have a contradiction in the first case.
\smallskip

\noindent\textsc{Second case --} When $x > y-x$, or equivalently, $2x > y$.
In this case we choose $\delta = \delta' =y- x$, subtract viscosity
inequalities \eqref{vi1} and \eqref{vi2}, and use  \eqref{m3} to see that
$$ \begin{aligned}
-&\int_{-y}^{-x} [\phi(-z) - \phi(0)] d\mu(z)-\int_{-(y-x)}^{y-x} [\phi(z) + \phi(-z)- 2\phi(0)]d\mu(z) \\
&-\int_{y-x}^{+\infty} ((u(x+z)-u(y+z))-(u(x)-u(y)))d\mu(z) + u(x)-u(y) \leq f(x)-f(y)\; .
\end{aligned}$$
Arguing as in the first case, we can drop all $u$-terms and are lead
to an inequality of the form
$$
-C(y-x)^{\beta-\alp}(B(a)+G)\leq 2\|f\|_\infty + O(C)\; ,$$
where
\begin{align*}
B(a)&=\int_{-a-1}^{-a}(|1+z|^\beta-1) \ \frac{dz}{|z|^{N+\alp}},\\
%\intertext{and}
G&=\int_{-1}^1 (|1+z|^\beta + |1-z|^\beta -2)\ \frac{dz}{|z|^{N+\alp}},
\end{align*}
with $a= x/(y-x)>1$.
A technical computation (Corollary~\ref{nec-res} in the appendix) then
shows that
$B(a)+G \leq - \kappa <0$ for some $\beta>\tibeta$ and we can conclude
the argument as in the first case. The proof of (a) is complete.

Note the important estimate, valid in both cases: there
exist $k_1, k_2>0$ such that
\begin{equation}\label{est.Iux.Iuy}
    I[u](x)-I[u](y)\le -k_1C|x-y|^{\beta-\alpha}+k_2(1+C)\,,
\end{equation}
where the $1$ comes from the localization terms. This formal estimate
should be interpreted in the viscosity sense and with the above choice(s) of
test function and parameters $\delta $ and $\delta'$, cf. e.g. \eqref{m5}.
\medskip

\noindent (b) To show the existence of solutions with a suitable regularity
property, we follow the so-called ``Sirtaki method in 4
steps''. We just give a formal sketch the proof
which is an easy adaptation of the above arguments.

We start by building a suitable approximate problem. We
approximate the L\'evy measure $\mu$ by bounded measures
$\mu_n=\mu\,1_{|z|>1/n}$ for $n \geq 1$ and denote the associated
nonlocal term by $I_n$. Then we introduce a truncation of the nonlocal
term and add an additional viscosity term. The results is the approximate equation
$$-\epsilon u_{xx} - T_R(I_n[u]) + u = f \quad \hbox{in  }\dom\; ,$$
where $T_R(s):=\max(\min(s,R),-R)$, $R,\epsilon>0$\,.
\smallskip

\noindent 1. For fixed $\epsilon,n,R$, since the $T_R$-term and the
measure $\mu_n$ are bounded, this equation can
easily be solved by classical viscosity solutions' methods (Perron's
method and comparison result). This provides us with a continuous
solution which is bounded and we even have $||u||_\infty \leq
||f||_\infty$.

Moreover, in order to obtain the $C^{0,\beta}$-regularity and
$C^{0,\beta}$-bounds, we consider \eqref{themax} and follow the
arguments in the first part of this proof. After subtracting the
viscosity sub- and supersolution inequalities, we formally obtain
\begin{align}
-\epsilon \left[ u_{xx}(x)-u_{xx}(y)\right]  - \left[T_R(I_n[u])(x) - T_R(I_n[u])(y)\right] & \nonumber \\
+u(x)-u(y) & \leq
f(x)-f(y) \; .\label{keyineqcob}
\end{align}
For the second-derivatives, we have an analogue estimate to
\eqref{est.Iux.Iuy}, namely there exists $k'_1, k'_2>0$ such that
\begin{equation}\label{est.uxx.uyy}
    u_{xx}(x)-u_{xx}(y)\le -k'_1C|x-y|^{\beta-2}+k'_2\,.
\end{equation}
Note that to give meaning to this formal estimate, we must consider
instead of $u_{xx}$ the sub- and super jets of the theorem of sums,
cf. e.g. \cite{BI}.  Now consider \eqref{keyineqcob} with
fixed $R,\eps>0$. Since the $T_R$-terms are bounded, we can rewrite it as
$$-\epsilon \left[ u_{xx}(x)-u_{xx}(y)\right] \leq 2R+2(||u||_\infty +
||f||_\infty
)\; ,$$
and use
\eqref{est.uxx.uyy} to find that the inequality cannot hold for $C$
large enough. This implies that the solution $\{u^{n,R,\epsilon}\}$ is
at least $C^{0,\beta}$ by the arguments of the regularity proof above.
\smallskip

\noindent 2. The above argument also shows that, for fixed $\epsilon$,
the $C^{0,\beta}$-bounds for the $\{u^{n,R,\epsilon}\}$ are uniform in
$n$ since they depend only on $R$ through the $T_R$-term. This
allows us to pass to the limit $n\to + \infty$ and get a solution
$u^{R,\epsilon}:=\lim_{n\to+\infty} u^{n,R,\epsilon}$ of the limiting equation enjoying the
same $C^{0,\beta}$-bound.  This solution satisfies the truncated
viscous equation with $\mu_n$ replaced by the singular measure $\mu$.
\smallskip

\noindent 3. Next, we repeat the proof of the $C^{0,\beta}$-bound for
the truncated
viscous equation~:
 Estimate \eqref{est.Iux.Iuy}  together with the fact that $T_R$ is an increasing and a $1$-Lipschitz continuous function, implies that
$$T_R(I[u](x))-T_R(I[u](y))\leq k_2\,.$$
at least for $C$ big enough.
Rewriting the analogue of \eqref{keyineqcob} as
$$
-\epsilon \left[ u_{xx}(x)-u_{xx}(y)\right]  \leq \left[T_R(I[u])(x) - T_R(I[u])(y)\right] +2(||u||_\infty +
||f||_\infty)\; ,$$
this new estimates on the difference
of the truncated terms shows that the $C^{0,\beta}$-bound which is obtained in Step ~1,
is independent of $R$ and we can let $R\ra+\infty$. The result is
that the limit $u^\eps:=\lim_{R\to\infty}u^{R,\eps}$ is a
$C^{0,\beta}$-solution of the non-truncated viscous equation
$$-I[u] -\epsilon u_{xx} + u = f \quad \hbox{in  }\dom\; .$$

\noindent 4. Finally we come back again to the proof of the
$C^{0,\beta}$-bound but, this time, the main role is played by the
non-local term via estimate~\eqref{est.Iux.Iuy}.  Indeed we rewrite the analogue of \eqref{keyineqcob} as
$$
- \left[I[u](x) - I[u](y)\right]  \leq \epsilon \left[ u_{xx}(x)-u_{xx}(y)\right]+2(||u||_\infty +
||f||_\infty)\; ,$$
and remark that, since the
$u_{xx}$-terms satisfy \eqref{est.uxx.uyy}, the $\epsilon$-term in
\eqref{keyineqcob} can be estimated by $\epsilon k'_2$. Using \eqref{est.Iux.Iuy},  we obtain
again a contradiction for large enough $C$. The
argument is the same as in Step~3 with the roles of the local and
nonlocal terms exchanged. This also explains the terminology
``Sirtaki's method'', since Sirtaki is a danse where we exchange the
roles of the two feet as we exchange here the role of the $\epsilon u_{xx}$ and $I[u]$ terms. To conclude the argument, we have found that
the $C^{0,\beta}$-bound is independent of $\epsilon$, and we pass
to the limit as $\epsilon\to0$. We get a solution $u$ of the original
problem  belonging to $C^{0,\beta}$. Since this solution is unique, it is
the solution we are looking for.
\end{proof}

Now we turn to the case when $N\geq 2$. Unfortunately we require far
more retrictive assumptions on $f$.

\begin{theorem}\label{exist2}  Assume $N\geq2$, that
  $(\mathrm{H}_f)$, $(\mathrm{H}_\eta^6)$, and $(\mathrm{H}_\mu'')$
  hold, and that  $f(\dots,x_N)$ is in
  $W^{2,\infty}(\R^{N-1})$ for any $x_N>0$ with uniformly bounded
  $W^{2,\infty}$-norms.

\noindent (a) Any bounded, uniformly continuous
  solution of \eqref{E} %satisfying \eqref{cond-0}
is in  $C^{0,\beta}(\domb)$ for some $\beta >\tibeta$.
%\added[ERJ]{I dropped 'satisfying \eqref{cond-0}' from the assumptions.}

\noindent (b) There exists a solution of \eqref{E} in
$C^{0,\beta}(\domb)$ for some $\beta >\tibeta$.
\end{theorem}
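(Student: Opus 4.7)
The plan is to adapt the $1$-D argument of Theorem~\ref{exist1} to $N$ dimensions by exploiting two structural features of the censored problem. First, the jump function $\eta$ given by $(\mathrm{H}_\eta^6)$ depends on $x$ only through $x_N$, so the operator $I$ commutes with tangential translations $x'\mapsto x'+h$. Second, for any test function $\phi$ depending only on $x_N$ the censored fractional Laplacian essentially collapses to its $1$-D analogue, since
$$\int_{\R^{N-1}} \frac{dz'}{(|z'|^2+z_N^2)^{(N+\alpha)/2}} = \frac{c_N}{|z_N|^{1+\alpha}}$$
for a constant $c_N>0$. The strategy for part~(a) is to first obtain Lipschitz regularity in $x'$ by transferring the regularity of $f$ via translation invariance, and then to prove $C^{0,\beta}$-regularity in $x_N$ by a direct adaptation of the $1$-D supremum argument.

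For tangential Lipschitz regularity, let $u_h(x):=u(x'+h,x_N)$; then $u_h$ is a viscosity solution of \eqref{E} with $f$ replaced by $f_h(x):=f(x'+h,x_N)$. Following the proof of Theorem~\ref{exist1}(a), I would consider
$$M_h := \sup_{x,y\in\domb}\bigl[u_h(x)-u(y) - L|h|\,\chi(x-y) - \mathrm{loc}(x,y)\bigr]$$
for a smooth $\chi$ equivalent to $|\cdot|$ near the origin, with the usual $\psi_R$-type localization. Writing the viscosity inequalities at a maximum point and subtracting, the singular part $\mu_*$ produces a sufficiently large negative contribution for $L>\|\partial_{x'}f\|_\infty$ to yield a contradiction, since $\|f_h-f\|_\infty\le|h|\,\|\partial_{x'}f\|_\infty$. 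This gives $\|u_h-u\|_\infty\le C|h|$ for all $h\in\R^{N-1}$.

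For normal H\"older regularity, I would consider
$$M := \sup_{x,y\in\domb}\bigl[u(x)-u(y) - C|x_N-y_N|^\beta - |x'-y'|^2/\e^2 - \mathrm{loc}(x,y)\bigr],$$
let $\e\to 0$, and use the tangential Lipschitz bound to reduce to a maximum point with $x'=y'$ and $x_N\ne y_N$. The penalization $\phi(z)=C|x_N-y_N+z_N|^\beta$ depends only on $z_N$, so the dimensional reduction above identifies the $N$-dimensional nonlocal computations in the subtracted viscosity inequalities with the $1$-D ones. Together with Lemma~\ref{alp-lem}, this produces the analogue of \eqref{est.Iux.Iuy}, namely
$$I[u](x)-I[u](y)\le -k_1 C|x_N-y_N|^{\beta-\alpha}+k_2(1+C),$$
which contradicts the boundedness of $u$ and $f$ when $C$ is taken large and $\beta>\tibeta$, proving the desired H\"older bound in $x_N$. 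Combined with the tangential Lipschitz bound this gives $u\in C^{0,\beta}(\domb)$.

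Part~(b) follows the \emph{Sirtaki method in 4 steps} of Theorem~\ref{exist1}(b), applied here to the approximate equations $-\e\,\partial_{x_N}^2 u - T_R(I_n[u]) + u = f$ in $\dom$ with $\mu_n=\mu\cdot\mathbf 1_{|z|>1/n}$. Perron's method plus comparison yields bounded continuous solutions $u^{n,R,\e}$ for fixed $(n,R,\e)$; translation invariance gives a tangential Lipschitz bound uniform in all parameters; and the normal $C^{0,\beta}$-bound is obtained by repeating the argument of part~(a) on the regularized equations. One then passes successively to the limits $n\to\infty$, $R\to\infty$, $\e\to 0$ as in the $1$-D proof. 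The main obstacle is Step~2 of part~(a): the mixed tangential-normal penalization must be set up so that the tangential contributions to the singular integrals (arising, e.g., from $I_\delta$ acting on the non-smooth term $|x'-y'|^2/\e^2$ after Taylor expansion, and from the analogue of the term $J$ in Theorem~\ref{exist1}) can be absorbed while the $1$-D singular estimate in the normal direction is still recovered. This is where the additional $W^{2,\infty}(\R^{N-1})$-regularity of $f$ enters, allowing one to trade a finite tangential difference of $f$ against its second tangential derivative.
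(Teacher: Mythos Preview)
Your broad plan---reduce the $x_N$-direction to the one-dimensional argument and handle the $x'$-directions separately via translation invariance---is the right shape, but you miss the device the paper actually uses for the tangential variables, and this is precisely the point you yourself flag as the ``main obstacle''.

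The paper does \emph{not} stop at a Lipschitz bound in $x'$. By linearity of the equation and the comparison result of Theorem~\ref{thm:comp.censored.reg}, the full $W^{2,\infty}(\R^{N-1})$-regularity of $f$ transfers to $u$: there is $K>0$ such that
\[
-K|z'|^2 \le u(x'+z',x_N)+u(x'-z',x_N)-2u(x',x_N)\le K|z'|^2
\]
for all $x',z',x_N$. This second-order difference bound (not merely Lipschitz) is what makes the singular tangential contributions integrable, since $|z'|^2$ is $\mu_*$-integrable near the origin whereas $|z'|$ is not when $\alpha\ge 1$.

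With this in hand, the paper avoids your mixed $\e$-penalization altogether. One considers $\sup\,[u(x',x_N)-u(x',y_N)-C|x_N-y_N|^\beta]$ (doubling only in $x'$ to make sense of the viscosity inequalities), and then rewrites the singular $\mu_*$-integrals using the symmetry of $\mu_*$: the integrand $u(x+z)-u(x)$ is replaced by the symmetric expression $\frac12[u(x'+z',x_N+z_N)+u(x'-z',x_N-z_N)-2u(x',x_N)]$, which in turn is decomposed as
\[
\Delta^2_{z'}u(x',x_N+z_N)+\Delta^2_{z'}u(x',x_N-z_N)+2\Delta^2_{z_N}u(x',x_N),
\]
where $\Delta^2_{z'}$ and $\Delta^2_{z_N}$ denote tangential and normal second differences. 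The tangential pieces are bounded by $K|z'|^2$ via the transferred regularity, while the normal piece is exactly the one-dimensional quantity, and your dimensional-reduction identity then yields the estimate \eqref{est.Iux.Iuy}. This cleanly separates the two directions and sidesteps the interaction between the $|x'-y'|^2/\e^2$ doubling and the $x_N$-dependent domain of integration (the analogue of the $J$-term) that you were worried about.

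For part~(b) the Sirtaki scheme is as you describe; the only adjustment is that the tangential control in each step again comes from the second-order difference bound rather than a first-order one.
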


%\added[ERJ]{I have reorganized the proof a little bit and tried to
 % improve the Enligsh.}

\begin{proof} We are not going to provide the full proof since it is
  rather long and tedious and is mostly based on two ingredients which
  we have already seen.
  But we remark that an easy consequence of the the comparison result
  and linearity of the problem, is that $u$ inherits the regularity of
  $f$. I.e.  there exists a constant $K>0$ such that, for any $x',z'
  \in \R^{N-1}$ and $x_N>0$,
\begin{equation}\label{est.u.zprime}
 -K|z'|^2 \leq u(x'+z',x_N)+ u(x'-z',x_N) -2 u(x'+z',x_N) \leq K|z'|^2 \; .
\end{equation}

Then we repeat the $1$-d proof essentially considering
$$ \sup_{[0,+\infty)\times [0,+\infty)}\, (u(x',x_N)-u(x',y_N) - C|x_N-y_N|^\beta)\; .$$
Of course, a doubling of variables in $x'$ is necessary to take care
of the singularity of the measure, but using the $W^{2,\infty}$
property in $x'$, we can go back to the $1$-d computations without any
difficulty. Let us just mention the key decomposition we
use here. We rewrite the integrals with respect to $\mu_*$, first replacing
the integrands by
$$u(x'+z',x_N+z_N)+ u(x'-z',x_N-z_N)-2u(x',x_N),$$
and then by
$$\Delta^2_{z'}u(x',x_N+z_N)+\Delta^2_{z'}u(x',x_N-z_N)+2\Delta^2_{z_N}u(x',x_N),$$
where
\begin{align*}
\Delta^2_{z'}u(x',x_N):= & \frac{1}{2}\Big(u(x'+z',x_N)+u(x'+z',x_N)-2u(x',x_N)\Big)\,,\\
\Delta^2_{z_N}u(x',x_N):= & \frac{1}{2}\Big(u(x',x_N+z_N)+u(x',x_N-z_N)-2u(x',x_N)\Big)\,.
\end{align*}
These expressions are not equal pointwise of course,
but they give the same integrals because of the symmetry of $\mu_*$.
We deal with the $\Delta^2_{z'}$-terms using \eqref{est.u.zprime},
and the $\Delta^2_{z_N}$-term is treated as in the one dimensional
case. Also note that we use a decomposition of $\dom$ into
sets like $\R^{N-1}\times \{z_N:\ a \leq z_N \leq b\}$, for $a,b >0$,
following the $1$-d proof.

Finally, concerning the nonsymetric part $\mu_\#$, we use as usual the fact that
it is a controlled term since it is less singular.

The existence is proved as in the proof of Theorem~\ref{exist1}.
\end{proof}

%\added[ERJ]{I add a remark -- if you do not want it, just remove it}
\begin{remark}
The regularity results of the $N=1$ and $N\geq2$ cases are different.
In the first case, the results is purely elliptic and we gain regularity.
In the second case, the result is elliptic in the $x_N$-direction
while in the other directions we just use a preservation of regularity
argument. It is an open problem to find an elliptic argument also
in the $x'$-directions.
\end{remark}

%%%%%%%%%%%%%%%%%%%%%%%%%%%%%%%%%%%%%
\section{The limit as $\alpha\to2^-$}
\label{sec:limit}

\newcommand{\Id}{\mathrm{Id}}

In this section we prove that all the Neumann
models we consider converge to the same local Neumann problem as
$\alpha\to2^-$, provided that the nonlocal operators include
the normalisation constant $(2-\alpha)$. To be more precise, we consider the
following problem
\begin{align}
\label{alp-prob}
\begin{cases}
	-(2-\alpha)\int_{\R^N} u_\alpha(x+\eta(x,z))-u_\alpha(x)\,d\mu_\alpha + u_\alpha(x) = f(x)& \text{in } \Omega\,,\\[0.2cm]
		\dfrac{\partial u}{\partial \mathbf{n}}=0 & \text{in } \partial\Omega\,,
	\end{cases}
\end{align}
where $\alp\in(0,2)$, $\eta$ depends on the Neumann model we consider, and
$$\frac{d\mu_\alpha}{dz}=\frac{g(z)}{|z|^{N+\alpha}}$$
where $g$ is nonnegative, continuous and bounded in $\R^N$, $g(0)>0$ and $g\in C^1(B)$ for some ball $B$ around $0$.

We prove below that the solution of \eqref{alp-prob} converge to the
solution of the following local problem,
\begin{equation}\label{eq:local}\begin{cases}
		-a\Delta u - b\cdot Du +u = f & \text{in } \Omega\,,\\
		\qquad\dfrac{\partial u}{\partial \mathbf{n}}=0 & \text{in } \partial\Omega\,,
	\end{cases}\end{equation}
where
$$a:=g(0)\frac{|S^{N-1}|}{N}\quad\text{and}\quad b:=Dg(0)\frac{|S^{N-1}|}{N}.$$
In this section $|S^{N-1}|$ denotes the measure of the unit sphere in $\R^N$ and
$\Id_N$ the $N\times N$ identity matrix.

\begin{theorem}\label{thm:localisation}
  Assume $(\mathrm{H}_\eta^i),\ i=0\dots 4$ hold and let $u_\alpha$  be the solutions of \eqref{alp-prob} for $\alp\in(0,2)$. Then,
   as $\alpha\to2^-$,
  $u_\alpha$ converges locally uniformly to the unique
  solution $u$ of \eqref{eq:local}.
\end{theorem}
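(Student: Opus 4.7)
The plan is to use the half-relaxed limit method in the spirit of Theorem~\ref{limit}. The comparison principles from Sections~\ref{sec:nonsens}--\ref{sec:sens2} applied with $\pm\|f\|_\infty$ as trivial sub/supersolutions of \eqref{alp-prob} give the uniform bound $\|u_\alpha\|_\infty\le \|f\|_\infty$. Set
$$\bar u(x):=\limsup_{\alpha\to 2^-,\,y\to x} u_\alpha(y),\qquad \underline u(x):=\liminf_{\alpha\to 2^-,\,y\to x} u_\alpha(y),$$
which are respectively bounded usc and lsc on $\domb$ with $\underline u\le\bar u$ everywhere. I would show that $\bar u$ is a viscosity subsolution and $\underline u$ a viscosity supersolution of the local Neumann problem \eqref{eq:local}, and then invoke the standard comparison principle for that uniformly elliptic linear Neumann problem to deduce $\bar u\le u\le\underline u$ where $u$ is its unique viscosity solution; combined with $\underline u\le\bar u$, this forces $\bar u=\underline u=u$, which is continuous, and a standard Dini-type argument then gives local uniform convergence $u_\alpha\to u$.

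The essence of the proof lies in the asymptotic identification of $(2-\alpha)I_{\mu_\alpha}$ on smooth test functions. At an interior maximum point $x\in\dom$ of $\bar u-\phi$, pick $0<\delta<x_N/2$ so that $(\mathrm{H}_\eta^0)$ gives $\eta(y,z)=z$ for every $y$ close to $x$ and every $|z|<\delta$. Standard perturbation of $\phi$ yields maximum points $x_\alpha\to x$ of $u_\alpha-\phi$ lying in $\dom$ for $\alpha$ close to $2$. Splitting $(2-\alpha)I_{\mu_\alpha}$ at level $\delta$, the tail is bounded by $(2-\alpha)\cdot 4\|f\|_\infty\|g\|_\infty\int_{|z|>\delta}|z|^{-N-\alpha}\,dz=O(2-\alpha)$ and vanishes in the limit. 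For the near part, a joint Taylor expansion of $\phi(x_\alpha+z)-\phi(x_\alpha)$ and $g(z)$ around $z=0$, combined with $\int_{S^{N-1}}\omega_i\,d\sigma=0$, $\int_{S^{N-1}}\omega_i\omega_j\,d\sigma=\tfrac{|S^{N-1}|}{N}\delta_{ij}$ and $\int_0^\delta r^{1-\alpha}\,dr=\tfrac{\delta^{2-\alpha}}{2-\alpha}$, shows that the prefactor $(2-\alpha)$ exactly cancels the radial singular scaling. Sending $\alpha\to 2^-$, so that $\delta^{2-\alpha}\to 1$ while $(2-\alpha)\delta^{3-\alpha}\to 0$ kills the cubic remainder, yields the viscosity subsolution inequality $-a\Delta\phi(x)-b\cdot D\phi(x)+\bar u(x)\le f(x)$ with $a,b$ as defined in the statement.

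The main obstacle is the boundary analysis, which has to be organised differently depending on the reflection model. At a maximum point $x\in\del\dom$ of $\bar u-\phi$, the $\delta$ above cannot be taken below $x_N$ and the specific form of $\eta$ near the boundary becomes decisive. For the non-censored models $(b)$--$(d)$ I would mimic the boundary step of Theorem~\ref{limit} with the $(2-\alpha)$ weight: the normalized drift $(2-\alpha)\gamma_{\mu_\alpha,\delta}(x_\alpha)$ produced by the non-odd reflection contribution must be shown to yield in the limit a nonnegative multiple of $-\mathbf{n}$ strong enough to dominate the other terms, a weighted analog of Lemma~\ref{gamman} forcing the viscosity Neumann inequality $-\del\phi/\del x_N(x)\le 0$. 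In the censored model $(a)$ no boundary drift is generated since $\eta$ vanishes in the outside-jump region; instead, for $\alpha$ close to $2$ we are in the strongly singular regime of Section~\ref{sec:sens2}, and an application of Theorem~\ref{thm:comp.censored.reg}(a) at the prelimit level combined with uniform-in-$\alpha$ H\"older estimates in the spirit of Theorems~\ref{exist1}--\ref{exist2} allows the equation to be propagated up to the boundary in the limit, giving a valid subsolution condition for \eqref{eq:local} at $\del\dom$. I expect the uniform-in-$\alpha$ control of the boundary behavior across these two qualitatively different scenarios, together with the careful bookkeeping of the constants $a,b$ emerging from the spherical integrals, to be the principal technical difficulty.

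Once $\bar u$ and $\underline u$ have been identified as a sub- and a supersolution of \eqref{eq:local}, the classical comparison principle for the linear uniformly elliptic Neumann problem closes the argument: $\bar u\le u\le \underline u\le\bar u$, so $\bar u=\underline u=u$, and the pointwise equality of the half-relaxed limits with the unique continuous solution $u$ of \eqref{eq:local} upgrades to local uniform convergence $u_\alpha\to u$ on $\domb$ via the standard Dini-type argument.
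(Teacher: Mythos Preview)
Your overall strategy (half-relaxed limits followed by comparison for \eqref{eq:local}) and your interior analysis are correct and match the paper's. The gap is in the boundary step.

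Your proposed ``weighted analog of Lemma~\ref{gamman}'' for the non-censored models does not work: the quantity $(2-\alpha)|\gamma_{\mu_\alpha,\delta}(y_\alpha)|$ need not blow up. A scaling computation (change variables $z=y_{\alpha,N}w$) shows that $\gamma_{\mu_\alpha,\delta}(y_\alpha)_N$ is of order $y_{\alpha,N}^{\,1-\alpha}$, so $(2-\alpha)\gamma_N\sim (2-\alpha)y_{\alpha,N}^{\,1-\alpha}$, whose limit as $\alpha\to2^-$ and $y_{\alpha,N}\to0$ depends entirely on the relative rates of these two convergences, which you do not control. Hence you cannot divide through and force the Neumann inequality this way. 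For the censored case your plan requires uniform-in-$\alpha$ H\"older estimates up to the boundary; these are not available from Section~\ref{sec:sens2} (the constants there depend on $\alpha$ through Lemma~\ref{alp-lem} and Corollary~\ref{nec-res}) and would be substantial extra work.

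The paper avoids both issues by a unified, model-independent argument. At a boundary maximum with $\partial\phi/\partial\mathbf{n}(x)>0$, one observes that the compensator contribution
\[
I_{2,1}=-g(0)\,\frac{\partial\phi}{\partial x_N}(y_\alpha)\,(2-\alpha)\int_{|z|<\delta}\eta(y_\alpha,z)_N\,\frac{dz}{|z|^{N+\alpha}}
\]
is nonnegative by Lemma~\ref{lem:mu.sharp}(iii) and can simply be \emph{dropped}, rather than used to dominate. The remaining terms are handled by a concentration lemma (Lemma~\ref{lem:concentration}) showing that the matrix measures $(2-\alpha)\eta_i\eta_j\,d\mu_\alpha$ converge weakly to $\mathrm{diag}(\bar a(x))\delta_0$ with $\tfrac{a}{2}\le\bar a_i\le\Lambda$, uniformly over all admissible $\eta$. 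This yields only a Pucci-type inequality $-\mathcal{M}^+(D^2\phi(x))-\Lambda|D\phi(x)|+\bar u(x)\le f(x)$ at the boundary, not the exact equation. The final trick is then to test with $\phi(x)+\tau(x_N-x_N^2/\eps^2)$: for $\tau$ small the Neumann part stays positive, while the $-\tau/\eps^2$ contribution to the Hessian makes the Pucci part positive for $\eps$ small, contradicting the min-type boundary condition. This reduces everything to $\partial\phi/\partial\mathbf{n}\le0$, independently of which reflection model $\eta$ encodes.
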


Before providing the proof, we introduce the following sequences of measures:
\begin{align*}
	(d\nu^1_\alpha)_{i,j}&=(2-\alpha)z_iz_j\frac{g(z)}{|z|^{N+\alpha}}\,dz\,,\\
	d\nu^2_\alpha&=(2-\alpha)z\frac{g(z)-g(0)}{|z|^{N+\alpha}}dz\,,\\
	(d\nu^3_{\alpha,y})_{i,j}&=(2-\alpha)\eta(y,z)_{i}\eta(y,z)_{j}\frac{g(z)}{|z|^{N+\alpha}}\,dz\,,\\
	d\nu^4_{\alpha,y}&=(2-\alpha)\eta(y,z)\frac{g(z)-g(0)}{|z|^{N+\alpha}}dz\,,
\end{align*}
where $\eta(y,z)_i$ denotes the $i$-th component of the vector
$\eta(y,z)$. Note that $\nu^1_\alpha$ and $\nu^3_{\alpha,y}$ are
matrix measures while $\nu^2_{\alpha}$ and $\nu^4_{\alpha,y}$ are
vector measures. The localization phenomenon occuring as
$\alpha\to2$ is reflected in the following lemma:
\begin{lemma}\label{lem:concentration}\

\noindent (a) As $\alpha\to 2^-$,
	$\nu^1_\alpha\rightharpoonup a\delta_0\Id_N$ and
%\qquad\text{and}\qquad
$\nu^2_\alpha\rightharpoonup b\delta_0$
in the sense of measures.

\noindent (b)	For any sequence $\alpha_k\to2$ and $y_k\to x$, there
exist two vector functions $\bar{a}(x),\bar{b}(x)\in\R^N$  satisfying
	$$ \frac{1}{2}a\leq \bar{a}_i(x)\leq
        \Lambda\quad\text{and}\quad
        |\bar{b}_i(x)|\leq\Lambda \quad\text{for some $\Lambda
=\Lambda(g,\eta)<\infty$}\,,$$
        such that, at least along a subsequence,
	$$\nu^3_{\alpha_k,y_k}\rightharpoonup \mathop{\rm diag}(\bar{a}(x))\delta_0\,,
	\quad \nu^4_{\alpha_k,y_k}\rightharpoonup
        \bar{b}(x)\delta_0\,,$$ where $\mathop{\rm diag}(\bar{a}(x))$
        is the diagonal matrix with diagonal coefficients $\bar{a}_i(x)$.
% For any $x$ we also have the estimates
% 	$$ \frac{1}{2}a\leq \bar{a}_i(x)\leq \Lambda\,,\ \|\bar{b}(x)\|_\infty\leq\Lambda\,,$$
% 	for some finite $\Lambda=\Lambda(g,\eta)$.
\end{lemma}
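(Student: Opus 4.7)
The plan is to pass to spherical coordinates $z=r\omega$ with $r=|z|$ and $\omega\in S^{N-1}$ (so $dz = r^{N-1}dr\,d\omega$) and reduce everything to a one-dimensional concentration phenomenon. The key observation is that the one-dimensional measure $(2-\alpha) r^{1-\alpha}\mathbf{1}_{(0,R]}(r)\,dr$ has mass $R^{2-\alpha}\to 1$ and concentrates at $r=0$ as $\alpha\to 2^-$, so for every bounded $\psi(r,\omega)$ continuous at $r=0$ and every $\phi\in C_c(\R^N)$ supported in $B_R$,
\[
(2-\alpha)\int_0^R\!\!\int_{S^{N-1}}\phi(r\omega)\,\psi(r,\omega)\,r^{1-\alpha}\,dr\,d\omega
\;\longrightarrow\;\phi(0)\int_{S^{N-1}}\psi(0,\omega)\,d\omega.
\]
All four convergence claims will follow from this limit together with the identity $\int_{S^{N-1}}\omega_i\omega_j\,d\omega = \frac{|S^{N-1}|}{N}\delta_{ij}$.

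\textbf{Part (a).} In spherical coordinates $(d\nu^1_\alpha)_{ij}=(2-\alpha)\omega_i\omega_j g(r\omega)\,r^{1-\alpha}\,dr\,d\omega$, and the concentration identity applied with $\psi=\omega_i\omega_j g(r\omega)$ immediately gives the limit $g(0)\frac{|S^{N-1}|}{N}\delta_{ij}\phi(0) = a\delta_{ij}\phi(0)$. For $\nu^2_\alpha$, I would use $g\in C^1$ near $0$ to write $g(r\omega)-g(0) = r\,Dg(0)\cdot\omega + \varrho(r\omega)$ with $\varrho(z) = o(|z|)$. The leading term becomes $(2-\alpha)\omega(Dg(0)\cdot\omega)r^{1-\alpha}\,dr\,d\omega$ and the concentration identity yields $\phi(0)\int_{S^{N-1}}\omega(Dg(0)\cdot\omega)\,d\omega = b\phi(0)$, while the $\varrho$-remainder factors as $o(1)\cdot(2-\alpha)r^{1-\alpha}$ and vanishes by dominated convergence.

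\textbf{Part (b).} By $(\mathrm{H}_\eta^1)$ the integrand of $\nu^3_{\alpha,y}$ is bounded by $c_\eta^2|z|^{2-N-\alpha}\|g\|_\infty$, which gives the uniform mass estimate $|\nu^3_{\alpha,y}|(B_R)\leq c_\eta^2\|g\|_\infty|S^{N-1}|R^{2-\alpha}$, and analogously for $\nu^4$; this produces the constant $\Lambda$. Applied to a shell, the same estimate yields $|\nu^3_{\alpha_k,y_k}|(B_R\setminus B_{r_0})\leq c_\eta^2\|g\|_\infty|S^{N-1}|(R^{2-\alpha_k}-r_0^{2-\alpha_k})\to 0$, so any weak-$*$ subsequential limit (extracted by Banach--Alaoglu from the uniformly bounded family) is supported at $\{0\}$. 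Thus along a subsequence $\nu^3_{\alpha_k,y_k}\rightharpoonup A(x)\delta_0$ and $\nu^4_{\alpha_k,y_k}\rightharpoonup c(x)\delta_0$ for some matrix $A(x)$ and vector $c(x)$. For the lower bound $\bar a_i(x)\geq a/2$, I would restrict the diagonal integrand to $\{z_N>0\}$: there $\eta(y_k,z)=z$ for every $k$ by $(\mathrm{H}_\eta^0)$ (independently of $y_{k,N}\geq 0$), so the corresponding piece of $(d\nu^3)_{ii}$ is covered by Part (a) and contributes $g(0)\int_{S^{N-1}\cap\{\omega_N>0\}}\omega_i^2\,d\omega = a/2$.

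\textbf{Main obstacle: diagonality of $A(x)$.} The delicate step is to show that $A_{ij}(x)=0$ for $i\neq j$. I would first split $g(z) = g(0) + (g(z)-g(0))$; the Lipschitz piece contributes $O\!\left((2-\alpha)\int_0^R r^{2-\alpha}dr\right) = O(R^{3-\alpha})\to 0$ and is harmless. The remaining task is to kill the $g(0)$-part of
\[
\int \eta(y,z)_i\,\eta(y,z)_j\,\frac{g(0)\,dz}{|z|^{N+\alpha}},\qquad i\neq j.
\]
For $i\leq N-1$ the change of variable $z\mapsto\sigma_i z$ preserves $g(0)dz/|z|^{N+\alpha}$ and flips the sign of $\eta_i$ by $(\mathrm{H}_\eta^2)$; combined with the structural behaviour of $\eta_j$ under $\sigma_i$ (which holds in the concrete models (a)--(d) and can be combined with $(\mathrm{H}_\eta^4)$ to estimate $\eta(y,\sigma_i z)_j - \eta(y,z)_j$ by a term carrying an extra power of $|z|$), one obtains a symmetrised identity whose right-hand side has an extra factor of $|z|$ and therefore vanishes in the concentration limit. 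The mixed case $i<N$, $j=N$ is reduced to the same reflection in the $i$-th coordinate. This symmetrisation is where all the technical work is concentrated and is also where the Lipschitz regularity of $g$ near $0$ is crucial.
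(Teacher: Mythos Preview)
Your overall strategy coincides with the paper's: show that the mass of each $\nu^i_\alpha$ outside any ball $B_\delta$ tends to zero as $\alpha\to 2^-$, compute the remaining mass near the origin by a first-order Taylor expansion of $g$ together with the spherical identity $\int_{S^{N-1}}\omega_i\omega_j\,d\omega=\tfrac{|S^{N-1}|}{N}\delta_{ij}$, bound the total variation of $\nu^3,\nu^4$ via $|\eta(y,z)|\leq c_\eta|z|$, and obtain the lower bound $\bar a_i\geq a/2$ by restricting to the half-space where $\eta(y_k,z)=z$ (the paper uses $\{z_N>-y_{k,N}\}$, you use $\{z_N>0\}$; both yield $a/2$). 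The polar-coordinate ``concentration lemma'' is a convenient repackaging but not a genuinely different route.

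Where you diverge is in the treatment of the off-diagonal entries of the limit of $\nu^3$. The paper dispatches this in one sentence, asserting that $(\mathrm{H}_\eta^2)$ alone makes
\[
\int_{|z|<\delta}\eta(y_k,z)_i\,\eta(y_k,z)_j\,\frac{g(z)\,dz}{|z|^{N+\alpha}}=0\qquad(i\neq j)
\]
identically. You are right to flag this as the delicate point, but your proposed remedy via $(\mathrm{H}_\eta^4)$ is mistaken: $(\mathrm{H}_\eta^4)$ bounds $|\eta(x',s,z)'-\eta(y',s,z)'|$ as a function of the \emph{space} variable $x'$, not of the \emph{integration} variable $z'$, so it says nothing about $\eta(y,\sigma_iz)_j-\eta(y,z)_j$. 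What is actually needed---and what holds in every model (a)--(d), where the reflection mechanism depends on $z$ only through $z_N$, or multiplies $z'$ by a scalar depending only on $(x_N,z_N)$---is the companion invariance $\eta(y,\sigma_iz)_j=\eta(y,z)_j$ for $j\neq i$, $i\leq N-1$. With that in hand, $(\mathrm{H}_\eta^2)$ gives $\eta_i\eta_j\mapsto-\eta_i\eta_j$ under $z\mapsto\sigma_iz$, and since $g(0)|z|^{-N-\alpha}dz$ is $\sigma_i$-invariant, the $g(0)$-part of the off-diagonal integral is exactly zero; your splitting $g=g(0)+(g-g(0))$ and the extra power of $|z|$ then dispose of the remainder in the limit, exactly as you outline.
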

\begin{proof}
	If $\delta\in(0,1)$ is fixed, we notice first that, for any $K>1$,
	$$0\leq(2-\alpha)\int_{\delta<|z|<K}|z|^2\frac{g(z)\,dz}{|z|^{N+\alpha}}\leq
	\|g\|_\infty(\delta^{2-\alpha}-K^{2-\alpha})\to0\text{ as
        }\alpha\to2^-\,,$$ so that the only possible limit in the
        sense of measure is supported in $\{0\}$.  Similar calculations
        show that the same is true for all the measures $\nu^i$,
        $i=2\dots 4$.

	Coming back to $\nu^1$, we  compute the inner integral as follows,
	\begin{align*}
		&(2-\alpha)\int_{|z|<\delta}z_iz_j\frac{g(z)}{|z|^{N+\alpha}}\,dz\\
                &=
		g(0)(2-\alpha)\int_{|z|<\delta}z_iz_j\frac{dz}{|z|^{N+\alpha}}+(2-\alpha)\int_{|z|<\delta}z_iz_j\frac{g(z)-g(0)}{|z|^{N+\alpha}}\,dz\,.
	\end{align*}
	The second integral vanishes as $\alpha\to2$ since
	$$\begin{aligned}
		&\Big|(2-\alpha)\int_{|z|<\delta}z_iz_j\frac{g(z)-g(0)}{|z|^{N+\alpha}}\,dz\Big|\\
&\leq
		C_g(2-\alpha)\int_{|z|<\delta}\frac{|z|^3}{|z|^{N+\alpha}}\,dz
		\leq C_g(2-\alpha)\frac{\delta^{3-\alpha}}{3-\alpha} \to 0 \quad \text{as  }\alpha \to 2^-,
		%\leq c_0(2-\alpha)
	\end{aligned}$$
	for $C_g=\|Dg\|_{L^\infty(B_\delta)}$.
	By symmetry, the first integral is zero for $i\neq j$, while for $i=j$,
	$$\begin{aligned}
	g(0)(2-\alpha)\int_{|z|<\delta}z_i^2\frac{dz}{|z|^{N+\alpha}}&=
	g(0)\frac{|S^{N-1}|}{N}(2-\alpha)\int_{r=0}^\delta \frac{r^{2+N-1}}{r^{N+\alpha}}\,dr\\
	&=g(0)\frac{|S^{N-1}|}{N}\delta^{2-\alpha}\longrightarrow a\quad\text{as }\alpha\to2^-\,.
      \end{aligned}$$ This means that the measures $\{\nu^1_\alpha\}$
      concentrate to a delta mass $\delta_0$ multiplied by the
      diagonal matrix $a\Id_N$.

	Let us now consider the inner integral for each component
 of the measures $\nu^2_\alpha$ : using similar arguments, we have
	$$\begin{aligned}
	&(2-\alpha)\int_{|z|<\delta}z_i\frac{g(z)-g(0)}{|z|^{N+\alpha}}\,dz\\
        &=
	(2-\alpha)\int_{|z|<\delta}z_i\frac{(z,Dg(0))+(z,Dg(z)-Dg(0))}{|z|^{N+\alpha}}\,dz\\
	&=\sum_{j=1}^N\frac{\partial g}{\partial x_j}(0)(2-\alpha)\int_{|z|<\delta}z_iz_j
	\frac{dz}{|z|^{N+\alpha}}+o_\delta(1)\\
	&=\frac{\partial g}{\partial
          x_i}(0)(2-\alpha)\int_{|z|<\delta}z_i^2\frac{dz}{|z|^{N+\alpha}}+o_\delta(1)\\
        &\longrightarrow \frac{\partial g}{\partial x_i}(0)\frac{|S^{N-1}|}{N}+o_\delta(1)\quad\text{as}\quad\alpha\to2^-\,.
	\end{aligned}$$
	Hence, by the definition of $b$, $\nu^2_\alpha$ concentrates to $b\delta_0$.

	We now come to the measures $\nu^3$ which is more complex to
        analyse due to the presence of the perturbation
        $\eta(y_k,z)$. We first notice that by using
        $(\mathrm{H}_\eta^2)$, it follows that for $i\neq j$,
	$$\int_{|z|<\delta}\eta(y_k,z)_i\eta(y_k,z)_j\frac{g(z)dz}{|z|^{N+\alpha}}=0\,.$$
	Then by $(\mathrm{H}_\eta^1)$ $|\eta(y_k,z)|\leq c_\eta|z|$,
        and we have
	$$\begin{aligned}
	0&\leq g(0)(2-\alpha)\int_{|z|<\delta}\eta(y_k,z)_i^2
        \frac{dz}{|z|^{N+\alpha}}\,dz\\
        &\leq
	g(0)c_\eta^2(2-\alpha)\int_{|z|<\delta}|z|^2\frac{dz}{|z|^{N+\alpha}}\,dz\leq g(0)c_\eta^2|S^{N-1}|\,.
    \end{aligned}$$
    So, the total mass of $\nu^3$ is bounded and, by the same arguments as above, it is clear that the support of $\nu^3$ shrinks
	to $\{0\}$ (or the empty set).

    Then, we split the integral over $\{|z|<\delta\}$ as follows
    $$ (2-\alpha)\int_{|z|<\delta}\eta(y_k,z)_i^2\frac{g(z)dz}{|z|^{N+\alpha}}=
    \int_{|z|<\delta \atop z_N>-y_{k,N}}(\cdots)
    + \int_{|z|<\delta \atop z_N\leq -y_{k,N}}(\cdots) = (A_i)+(B_i)\,.$$
    The first integral is easy to handle since $\eta_i(y_k,z)=z$ when
    $z_N>-y_{k,N}$,
    $$\begin{aligned} (A_i) & = (2-\alpha)\int_{|z|<\delta \atop z_N>-y_{k,N}}z_i^2\frac{g(z)dz}{|z|^{N+\alpha}}\\
      & = (2-\alpha)\int_{|z|<\delta \atop
        z_N>0}z_i^2\frac{g(z)dz}{|z|^{N+\alpha}} + o(y_{k,N})
      \to\frac{1}{2}a\,.
    \end{aligned}$$
The other integral has a sign and can take
    different values according to the structure of the jumps, but in
    all cases we see that the weak limit of $\nu^3$ can be written as
    $\bar{a}(x)\delta_0$ where $\bar{a}(x)$ satisfies
    $a/2\leq\bar{a}_i(x)\leq\Lambda$.

    The measure $\nu^4$ is treated similarly:
	the total mass can be bounded by
	\begin{align*}
&	(2-\alpha)\int_{|z|<\delta}|\eta(y,z)|\frac{|g(z)-g(0)|}{|z|^{N+\alpha}}dz\\
&\leq
	c_\eta C_g (2-\alpha)\int_{|z|<\delta}|z|^2\frac{dz}{|z|^{N+\alpha}}=
	c_\eta C_g|S^{N-1}|\delta^{2-\alpha}\,,
	\end{align*}
	so that, up to a subsequence, there exists indeed a vector function $\bar b$
	such that $\nu^4_{\alpha_n,y_n}\to \bar b\delta_0$ in the sense of
	measures, with $\|\bar b\|_\infty\leq c_\eta C_g|S^{N-1}|$. The
	result then holds with $\Lambda:=|S^{N-1}|c_\eta\max\{C_g,g(0)\}.$
\end{proof}

\begin{remark} Note that in the censored case, $\bar{a}(x)\equiv a/2$ since
  the jumps below level $-y_N$ are censored, while
  $\bar{a}(x)=a$ by symmetry when the jumps are mirror reflected.
  Under our general hypotheses, different structures of the jumps
  (i.e. different  $\eta$'s) lead to different
  $\bar{a}$'s which could in principle depend on $x$ and the sequences
  $\alpha_k,y_k$.  We will overcome this difficulty by using the
  extremal Pucci operator associated to $\bar{a}(x)$: for any
  symmetric $N\times N$ matrix $A$ with eigenvalues $(\lambda_i)$ we
  define
\begin{equation}\label{abar.pucci}
    \mathcal{M}^+(A):=\frac{a}{2}\sum_{\lambda_i<0}\lambda_i + \Lambda\sum_{\lambda_i>0}\lambda_i\,.
\end{equation}
\end{remark}

\begin{proposition}\label{hrlalpha}
	Let us define the half relaxed limits as $\alpha\to2^-$,
	$$\bar{u}(x):=\limsup_{\alpha\to2, y\to x}u_\alpha(y)\qquad\text{and}\qquad\underline{u}(x):=\liminf_{\alpha\to2, y\to x}u_\alpha(y)\,.$$
	Under the assumptions of Theorem \ref{thm:localisation},
        $\bar{u}$ is a viscosity subsolution of \eqref{eq:local}, and
        $\underline{u}$ is a viscosity supersolution of
        \eqref{eq:local}.
\end{proposition}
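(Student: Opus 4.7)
The strategy is the standard half-relaxed limits scheme combined with the concentration information of Lemma~\ref{lem:concentration}. I treat only $\bar u$; the case of $\underline u$ is symmetric. Fix a test function $\phi\in C^2(\R^N)$ with bounded derivatives such that $\bar u-\phi$ achieves a strict local maximum at some $x_0\in\domb$. By the standard perturbation argument for half-relaxed limits, I extract a subsequence $\alpha_k\to 2^-$ and points $x_k\in\domb$ such that $x_k$ is a local maximum of $u_{\alpha_k}-\phi$, $x_k\to x_0$, and $u_{\alpha_k}(x_k)\to\bar u(x_0)$.

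\textbf{Interior case $(x_0\in\Omega)$.} For $k$ large, $x_{k,N}$ stays bounded away from zero, so by $(\mathrm{H}_\eta^0)$ there is $\delta>0$ with $\eta(x_k,z)=z$ for $|z|<\delta$. Write the viscosity subsolution inequality via the splitting $I=I_\delta+I^\delta$ of Definition~\ref{def1}. The outer integral $(2-\alpha_k)I^\delta[u_{\alpha_k}](x_k)$ vanishes in the limit because $u_{\alpha_k}$ is uniformly bounded, $\mu_{\alpha_k}(\{|z|>\delta\})$ stays bounded, and $(2-\alpha_k)\to 0$. For the inner integral, I Taylor-expand $\phi$ around $x_k$ to second order; the cubic remainder integrates against $(2-\alpha_k)\mu_{\alpha_k}$ to $O((2-\alpha_k)\delta^{3-\alpha_k})\to 0$. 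The quadratic contribution converges to $a\,\Delta\phi(x_0)$ by Lemma~\ref{lem:concentration}(a) applied to $\nu^1_{\alpha_k}$, and the first-order compensator term splits via $g(z)=g(0)+(g(z)-g(0))$: the $g(0)$-piece vanishes by antisymmetry of $z$ against $|z|^{-N-\alpha_k}\,dz$, while the $(g-g(0))$-piece converges to $b\cdot D\phi(x_0)$ via $\nu^2_{\alpha_k}$. Passing to the limit yields the interior subsolution inequality $\bar u(x_0)-a\Delta\phi(x_0)-b\cdot D\phi(x_0)\leq f(x_0)$.

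\textbf{Boundary case $(x_0\in\partial\Omega)$.} I must show that either the same interior inequality holds or $-\partial_N\phi(x_0)\leq 0$. Assume the latter fails strictly, $-\partial_N\phi(x_0)>0$; by continuity, $-\partial_N\phi(x_k)>0$ for $k$ large. Since $\alpha_k\to 2$ puts us in the singular regime $c=1$ of $(\mathrm{H}_\mu)$, Definition~\ref{def1} forces $-\partial_N\phi(x_k)\leq 0$ at any boundary maximum point, contradicting the strict inequality. Hence $x_k\in\Omega$ for $k$ large, and the interior argument reapplies, with one difference: since $x_{k,N}\to 0$, the relation $\eta(x_k,z)=z$ no longer holds on a uniform neighbourhood of $0$, so I must use Lemma~\ref{lem:concentration}(b) with $\nu^3_{\alpha_k,x_k}$ and $\nu^4_{\alpha_k,x_k}$ in place of $\nu^1,\nu^2$. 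Along a further subsequence this produces an intermediate inequality of the form
$$\bar u(x_0)-\tfrac12\sum_i\bar a_i(x_0)\phi_{ii}(x_0)-\bar b(x_0)\cdot D\phi(x_0)\leq f(x_0),$$
with $\bar a_i(x_0)\in[a/2,\Lambda]$ and $|\bar b(x_0)|\leq\Lambda$. I then invoke the Pucci bound~\eqref{abar.pucci} to control the middle term by $\mathcal{M}^+(D^2\phi(x_0))$, and use a test-function perturbation (for instance $\phi\mapsto\phi+\mu(x_N-x_{0,N})$ with $0<\mu<-\partial_N\phi(x_0)$, combined with a small quadratic shift) to collapse the Pucci-type inequality into the linear one with coefficients $a$ and $b$.

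\textbf{Main obstacle.} The essential difficulty is the boundary case: the limits of $\nu^3_{\alpha_k,x_k}$ and $\nu^4_{\alpha_k,x_k}$ in Lemma~\ref{lem:concentration}(b) carry coefficients $\bar a_i(x_0),\bar b(x_0)$ that a priori depend on the jump model $\eta$ and on the maximizing subsequence, rather than the universal constants $a$ and $b$ of \eqref{eq:local}. Reconciling this model-dependent intermediate inequality with the model-independent linear target problem through the Pucci maximal operator~\eqref{abar.pucci} and a careful test-function perturbation is the technical heart of the proposition, and is precisely what shows that all nonlocal Neumann models localize to the \emph{same} limit.
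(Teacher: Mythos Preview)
Your interior case is essentially the paper's argument and is fine. The boundary case has two genuine gaps.

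\textbf{The unbounded compensator piece.} In the boundary step you replace $\nu^1,\nu^2$ by $\nu^3,\nu^4$ from Lemma~\ref{lem:concentration}(b), but this only accounts for the second-order term and the $(g(z)-g(0))$-part of the first-order compensator. The $g(0)$-part,
\[
I_{2,1}=-g(0)\,\partial_{x_N}\phi(x_k)\,(2-\alpha_k)\int_{|z|<\delta}\eta(x_k,z)_N\,\frac{dz}{|z|^{N+\alpha_k}},
\]
is \emph{not} covered by $\nu^4$ and does \emph{not} vanish by antisymmetry once $\eta(x_k,z)\neq z$. In fact this integral can blow up as $x_{k,N}\to 0$. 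The paper handles it by observing (via Lemma~\ref{lem:mu.sharp}(iii)) that the integral is nonnegative, so under the standing assumption $-\partial_{x_N}\phi(x_k)>0$ the whole term $I_{2,1}$ is nonnegative and can simply be dropped from the subsolution inequality. You need this sign argument; without it the passage to the Pucci-type inequality is unjustified.

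\textbf{The endgame is misstated.} You write that a test-function perturbation will ``collapse the Pucci-type inequality into the linear one with coefficients $a$ and $b$.'' This cannot work: $\mathcal{M}^+(D^2\phi)\geq \sum_i\bar a_i\phi_{ii}$ by construction, so the Pucci inequality is \emph{weaker} than any specific linear one in the admissible range, and no perturbation preserving the maximum point will recover the stronger linear inequality. The paper's Step~3 does something different: it keeps the assumption $\partial\phi/\partial\mathbf{n}(x_0)>0$ and perturbs by $\psi(x_N)=\tau(x_N-x_N^2/\eps^2)$, which adds $+a\tau/\eps^2$ to the Pucci side while shifting the normal derivative by only $-\tau$. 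Choosing $\tau$ small then $\eps$ small forces both the Pucci expression and the Neumann term to be strictly positive, contradicting \eqref{eq:boundary.local}. The conclusion is that $\partial\phi/\partial\mathbf{n}(x_0)\leq 0$ \emph{always} holds at boundary maxima, which is already enough for the viscosity subsolution condition of \eqref{eq:local}; one never needs (and cannot obtain) the linear equation at the boundary.
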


\begin{proof}
  The proofs for $\bar{u}$ and $\underline{u}$ are similar, therefore we only
  provide it for $\bar{u}$.  We have to check that $\bar{u}$ satisfy
  the viscosity subsolution inequalities for the Neumann problem
  \eqref{eq:local} at any point $x\in\domb$. There are two separate
  cases to check, (i) when $x\in\dom$ and (ii) when $x\in\del\dom$.
\medskip

	\noindent\textsc{Step 1.} Case (i) where $x\in\dom$, that is
	$x_N>0$. Let $\phi$ be a smooth function and assume that $x$ is a strict local maximum point of $u-\phi$. By
        standard arguments there exists a sequence $(y_\alpha)_\alp$
        of local maximum points of $u_\alpha-\phi$ such that $y_\alpha\to x$
        as $\alpha\to2^-$.  Moreover, since $x_N>0$, by taking
        $\alpha$ close to $2$, we can assume
        that $y_{\alpha,N}>\delta$ for some small $\delta>0$. By the subsolution inequality for $u_\alpha$ at
        $y_\alpha$,
	$$\begin{aligned}
	 -(2-\alpha)\int_{|z|<\delta} \phi(y_\alp+z)-\phi(y_\alp)-D\phi(y_\alp)\cdot z \,d\mu_\alpha - (2-\alpha)\int_{|z|<\delta} D\phi(y_\alp)\cdot z \,d\mu_\alpha\\
	 -(2-\alpha)\int_{|z|\geq \delta}u_\alpha(P(y_\alp,z))-u_\alpha(y_\alp)\,d\mu_\alpha + u_\alpha(y_\alp) \leq f(y_\alp).
	\end{aligned}$$
We recall that the second integral of the left-hand side is well-defined : see the remark after Lemma~\ref{cor:compensator}.

We denote the three integral terms by $I_1$, $I_2$, and $I_3$. Then
	$$\begin{aligned}
	 I_1 %&:=-(2-\alpha)\int_{|z|<\delta} \phi(y_\alp+z)-\phi(y_\alp)-D\phi(y_\alp)\cdot z \,d\mu_\alpha \\
	 &=
         -(2-\alpha)\int_{|z|<\delta}((D^2\phi(y_\alp)+o_\delta(1))z,z)\,d\mu_\alpha
         \\ %\quad\text{for some }\xi_z\in B(y_\alp,\delta)\\
	 &=-(2-\alpha)\int_{|z|<\delta}(D^2\phi(y_\alp)z,z)\,d\mu_\alpha+o_\delta(1).
       \end{aligned}$$
       Note that the $o_\delta(1)$-term is independent of
       $\alpha$ because the measure $(2-\alpha)|z|^2\mu_\alpha$ has
       bounded mass.  The symmetry of $\mu_\alpha$ implies that
       $\int_{|z|<\delta}z_iz_j\,d\mu_\alpha=0$ and then, by Lemma
       \ref{lem:concentration}, we get
\begin{align*}
I_1&=-(2-\alpha)\mathrm{Tr}(D^2\phi(y_\alp))\int_{|z|<\delta}|z|^2\,d\mu_\alpha+o_\delta(1)
%\\ &
=
	 a\Delta\phi(x)+o_\alpha(1)+o_\delta(1)\,.
\end{align*}
	Similarly we have
	$$I_2=- (2-\alpha)\int_{|z|<\delta} D\phi(y_\alp)\cdot z \,d\mu_\alpha
	= -(2-\alpha)D\phi(y_\alp)\int_{|z|<\delta}z\,d\mu_\alpha,$$
and by symmetry of $\mu_\alpha$ and Lemma \ref{lem:concentration} we see that
	\begin{align*}
I_2 &= -(2-\alpha)D\phi(y_\alp)\int_{|z|<\delta}z\frac{g(z)-g(0)}{|z|^{N+\alpha}}\,dz%\\	&
=b\cdot D\phi(x)+o_\alpha(1)+o_\delta(1)\,.
\end{align*}
The $o_\delta(1)$-terms are independent of $\alpha$ since the
measures $\nu^2_\alpha$ of Lemma \ref{lem:concentration} have unformly
bounded mass. For the last integral $I_3$, we use the boundedness of
 $(u_\alpha)_\alp$ with respect to $\alpha$ to see that
	\begin{equation}\label{bound.I3}
	|I_3|
        % =\Big|(2-\alpha)\int_{|z|\geq
        %   \delta}u_\alpha(P(y_\alp,z))-u_\alpha(y_\alp)\,d\mu_\alpha\Big|
        \leq
	C(2-\alpha)\int_{|z|\geq\delta}\frac{dz}{|z|^{N+\alpha}}\leq C'\frac{2-\alpha}{\alp\delta^\alpha}\ra0\quad\text{as}\quad\alp\ra2\,.
	\end{equation}
	So we keep $\delta>0$ fixed and pass to the limit as
$\alpha\to2^-$ (and $y_\alp\to x$) to get
	$$-a\Delta\phi(x)-b\cdot D\phi(x)+\bar{u}(x)\leq f(x)+o_\delta(1)\,.$$
	Then, since $\delta<x_N$ could be arbitrarily small, we pass to
        the limit as $\delta\ra0$ and get the viscosity subsolution
        condition for $\bar{u}$ at $x$.
\medskip

	\noindent\textsc{Step 2.} Case (ii) where $x\in\del\dom$, that is
	$x_N=0$. We again consider a smooth function $\phi$
	such that $\bar{u}-\phi$ has a strict local maximum point at $x$ and, as
above, we have a sequence $(y_\alpha)_\alp$
        of maximum points of $u_\alpha-\phi$ such that $y_\alpha\to x$
        as $\alpha\to2^-$.

        In this step we are going to prove that
	\begin{equation}\label{eq:boundary.local}
	\min\Big(-\mathcal{M}^+(D^2u(x))-\Lambda|Du(x)|+\bar{u}(x)-f(x)\
	;\ \frac{\partial\phi}{\partial\mathbf{n}}(x)\,\Big)\leq0\,,
	\end{equation}
	where $\mathcal{M}^+$ is defined in \eqref{abar.pucci}.
        We may assume that
        $\frac{\partial\phi}{\partial\mathbf{n}}(x)=-\frac{\partial\phi}{\partial{x_N}}(x)>0$ since otherwise
        \eqref{eq:boundary.local} is already satisfied.  Then for $\alpha$
        close to $2$, $-\frac{\partial\phi}{\partial{x_N}}(y_\alp)>0$
        by the continuity of $D\phi$. We can also
assume  $y_\alp\in\dom$, since otherwise $y_\alp\in\del\dom$ and
 then
 $\frac{\partial\phi}{\partial\mathbf{n}}(y_\alp)=-\frac{\partial\phi}{\partial{x_N}}(y_\alp)\leq0$
 for
$\alp$ close to 2 by Definition \ref{def1}, and this would contradict our assumption.

	Therefore $0<y_{\alpha,N}\to0$ as $\alpha\to2$, and
        the subsolution inequality for $u_\alp$ takes the form
	$$\begin{aligned}
	& -(2-\alpha)\int_{|z|<\delta} \phi(y_\alp+\eta(y_\alp,z))
	-\phi(y_\alpha)-D\phi(y_\alp)\cdot \eta(y_\alp,z) \,d\mu_\alpha\\
	& - (2-\alpha)\int_{|z|<\delta} D\phi(y_\alp)\cdot \eta(y_\alp,z) \,d\mu_\alpha\\
	& -(2-\alpha)\int_{|z|\geq \delta}u_\alpha(P(y_\alp)-u_\alpha(y_\alp)\,d\mu_\alpha + u_\alpha(y_\alp)
	\leq f(y_\alp).
	\end{aligned}$$
	We denote as before  the three integral terms by $I_1,I_2,I_3$.
	The compensator term $I_2$ can be written as
	$$\begin{aligned}
  	  I_2&= -g(0)D\phi(y_\alp)\cdot(2-\alpha)\int_{|z|<\delta}\eta(y_\alp,z)\frac{dz}{|z|^{N+\alpha}}\\
	  &\quad\, -D\phi(y_\alp)\cdot(2-\alpha)\int_{|z|<\delta}\eta(y_\alp,z)\frac{g(z)-g(0)}{|z|^{N+\alpha}}dz\\
		&=I_{2,1}+I_{2,2}\,.
              \end{aligned}$$
For symmetry reasons of both $\eta$ and
              the measure, $I_{2,1}$ reduces to the scalar product of
              the $N$-th components, and it has a sign,
	$$\begin{aligned}
  	  I_{2,1}&= -g(0)\frac{\partial\phi}{\partial{x_N}}(y_\alp)(2-\alpha)
  	  \int_{|z|<\delta}\eta(y_\alp,z)_N\frac{dz}{|z|^{N+\alpha}}\geq0\,\\
	\end{aligned},$$
since $g(0)$,
        $-\frac{\partial\phi}{\partial{x_N}}(y_\alp)$,
        and the $\eta$-integral are nonnegative (see Lemma
        \ref{lem:mu.sharp} (iii)).  Thus we may
drop the $I_{2,1}$ term from the inequality above and get that
	$$I_1+I_{2,2}+I_3+u_\alpha(y_\alp)\leq f(y_\alp)\,.$$

	We now pass to the limit in this inequality as $\alpha\to2$
        and hence $y_{\alpha,N}\to0$.  The difference with
        \textsc{Step 1} above, is that now $y_\alpha$ converge to the
        boundary so that we cannot take a fixed $0<\delta<y_{\alpha,N}$
as $\alp\ra2$.
For the first integral, Lemma~\ref{lem:concentration} enables us to take subsequences $\alpha_k\to2$ and $y_\alpha\to0$ such that
(dropping the subscript $k$ for simplicity)
	$$\begin{aligned}
	I_1&= -(2-\alpha)\int_{|z|<\delta} \phi(y_\alp+\eta(y_\alp,z))-\phi(y_\alp)-D\phi(y_\alp)\cdot
	\eta(y_\alp,z)\frac{g(z)dz}{|z|^{N+\alpha}}\\
	&=-\sum_{i,j}\int_{|z|<\delta}(\partial^2_{i,j}\phi(y_\alp) + o_\delta(1))\,d(\nu^3_{\alpha,y_\alp})_{i,j}(z)
	%\quad\text{for some }\xi_z\in B(y_\alp,\delta)
	\\
	&=-\sum_{i,j}\partial^2_{i,j}\phi(x)\int_{|z|<\delta}d(\nu^3_{\alpha,y_\alp})_{i,j}(z)+o_{\alp}(1)+o_\delta(1)\\
	&= -\sum_i\bar{a}_i(x)\partial_{i,i}^2\phi(x) + o_{\alpha}(1)+o_\delta(1)\\
	& \geq -\mathcal{M}^+(D^2\phi(x))+ o_{\alpha}(1)+o_\delta(1)\,.
	\end{aligned}$$

    The last term $I_3$ can be treated as in \textsc{Step 1} and
    vanishes as $\alpha\to2$. We are left with the $I_{2,2}$ term
and use again Lemma \ref{lem:concentration}, this time for the
    measure $\nu^4$. The result is the existence of a vector
$\bar{b}(x)$ such that along subsequences we have
	$$\begin{aligned}
	I_{2,2}&=D\phi(y_\alp)\cdot(2-\alpha)\int_{|z|<\delta}\eta(y_\alp,z)\frac{g(z)-g(0)}{|z|^{N+\alpha}}dz\\&=
    D\phi(x)\cdot \bar{b}(x)+o_{\alpha}(1)
    \geq -\Lambda|D\phi(x)|+o_{\alpha}(1)\,.
	\end{aligned}$$
Hence, passing to the limit  $\alpha\to2$ in the above inequality, leads to
$$-\mathcal{M}^+(D^2\phi(x))-\Lambda|D\phi(x)|+\bar{u}(x)-f(x)\leq0, $$
and \eqref{eq:boundary.local} still holds.
\medskip

\noindent\textsc{Step 3.} We shall prove now that boundary condition
\eqref{eq:boundary.local} reduces to the condition
$\frac{\partial\phi}{\partial\mathbf{n}}\leq0$. Let us assume on the contrary
that $\frac{\partial\phi}{\partial\mathbf{n}}(x)>0$ for some point $x$ at
the boundary $\{x_N=0\}$ and some smooth function $\phi$  such that
$u-\phi$ has a maximum point at $x$. For any $\tau,\eps>0$, we
take a smooth, bounded function $\psi:\R_+\to\R_+$ such that
	$$\psi(t)=\tau\Big(t - \dfrac{t^2}{\eps^2}\Big)\quad\text{for}\quad 0\leq t\leq\eps^2/2\,.$$
    Since $\psi(0)=0$ and $0\leq\psi$ for $0\leq t\leq\eps^2/2$, it follows that
 $u(x)-\phi(x)-\psi(x_N)$ has again a local maximum point at $x$.
    Hence \eqref{eq:boundary.local} holds with $\phi(x)+\psi(x_N)$
    replacing $\phi(x)$, i.e.
	\begin{equation}\label{eq:local.boundary.2}
          \min\Big(E(\phi)+\frac{a}{2}\frac{2\tau}{\eps^2}-\Lambda\tau ;\ \frac{\partial\phi}{\partial\mathbf{n}}(x)-\tau\,\Big)\leq0\,,
	\end{equation}
where
    $$E(\phi):=-\mathcal{M}^+(D^2\phi(x))-\Lambda|D\phi(x)|+\bar{u}(x)-f(x)\,.$$
        Since we assumed that $\frac{\partial\phi}{\partial\mathbf{n}}(x)>0$, we
        first fix $\tau>0$ small enough so that the inequality
        $\frac{\partial\phi}{\partial\mathbf{n}}(x)-\tau>0$ still holds.
    Then we can choose $\eps>0$ small enough to ensure that also
    $$E(\phi)+\frac{a}{2}\frac{2\tau}{\eps^2}-\Lambda\tau>0\,.$$
    But then we contradict \eqref{eq:local.boundary.2}, and  hence the
    boundary condition for $\bar{u}$ reduces to
    $\partial\phi/\partial\mathbf{n}\leq0$
    everywhere on the boundary. This concludes the proof of Proposition~\ref{hrlalpha}.
\end{proof}

\begin{proof}[Proof of Theorem \ref{thm:localisation}]
  We have seen that $\bar{u}$ is a subsolution of \eqref{eq:local}
  while $\underline{u}$ is a supersolution of the same problem.  Since
  $\underline{u}\leq\bar{u}$ on $\domb$ by definition and
  $\underline{u}\geq\bar{u}$ on $\domb$ by the comparison principle for
  \eqref{eq:local}, we see that $\underline{u}=\bar{u}$ on $\domb$. Setting $u:=\underline{u}=\bar{u}$ on $\domb$, it
  immediately follows that $u$ is a continuous (since $\underline{u}$ is lsc and $\bar{u}$ is usc) and the unique viscosity solution of \eqref{eq:local}. By classical arguments in the half-relaxed limit method, the
%  Moreover,
%$$v_\alp(x):=\inf_{0<\beta\leq\alp}u_\beta(x)\leq u_\alp(x)\leq  \sup_{0<\beta\leq\alp}u_\beta(x)=:v^\alp(x),$$
%and $v_\alp$ and $v^\alp$ are sequences of increasing lower and
%decreasing upper semi-continuous functions converging pointwisly to
%$u$:
%$$(u=)\ \underline{u}\leq
%\liminf_{\alp\ra2}v_\alp\leq\limsup_{\alp\ra2}v^\alp\leq\bar{u}\ (=u).$$
%By Dini's theorem this convergence is locally uniform, and hence
sequence $(u_\alp)_\alp$ also converge locally uniformly to $u$.
\end{proof}

\appendix
\section{Blowup supersolution in censored case I.}
\label{App:Blowup}

In this section we assume $(\mathrm{H}_\eta^6)$ and $(\mathrm{H}_\mu')$
 as in Section \ref{Sec:alp1}. Remeber that
$\dom:=\big\{(x_1,\dots,x_N)=(x',x_N): x_N\geq 0\big\}$. First we show
that in the censored fractional Laplace case (i.e. the censored alpha stable
case), we can essentially take
$$\U(x)=-\ln x_N$$
as our blowup supersolution in assumption (U) in Section \ref{Sec:alp1}.

\begin{lemma}
\label{alph1}
If $d\mu(z)=\frac {dz}{|z|^{N+\alp}}$ for $\alp\in(0,1)$ and
$\U(x)=-\ln(x_N)$, then
$$-I[\U](x)=-\int_{x_N+z_N\geq0} \U(x+z)-\U(x)\
\frac{dz}{|z|^{N+\alp}}>0\qquad\text{for}\qquad x\in\dom.$$
\end{lemma}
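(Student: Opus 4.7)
My plan is a scaling reduction to a one-dimensional integral followed by an inversive-symmetry argument exposing a pointwise sign. First, since $\alpha<1$ and $\U$ is smooth away from the boundary, the estimate $|\U(x+z)-\U(x)|\lesssim |z|/x_N$ near $z=0$ makes the integral absolutely convergent, so no principal value is needed for $x\in\dom$. Next, the substitution $z=x_N w$ combined with the identity $\U(x)-\U(x+x_N w)=\ln(1+w_N)$ gives the homogeneity relation
$$-I[\U](x) = x_N^{-\alpha}\int_{w_N>-1}\ln(1+w_N)\,\frac{dw}{|w|^{N+\alpha}}\,,$$
so it suffices to show the right-hand $w$-integral $J$ is strictly positive. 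Integrating out the tangential variables $w'\in\R^{N-1}$ via the standard identity
$$\int_{\R^{N-1}}\frac{dw'}{(|w'|^2+t^2)^{(N+\alpha)/2}} = \frac{C_N}{|t|^{1+\alpha}}\,,\qquad C_N:=\int_{\R^{N-1}}\frac{du}{(|u|^2+1)^{(N+\alpha)/2}}>0\,,$$
reduces the problem to the one-dimensional inequality
$$K := \int_{-1}^{\infty}\frac{\ln(1+t)}{|t|^{1+\alpha}}\,dt > 0\,.$$

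The heart of the argument is the positivity of $K$, in which the negative contribution from $t\in(-1,0)$ competes nontrivially with the positive contribution from $t\in(0,\infty)$. After the change of variables $s=1+t$, I would write $K=\int_0^\infty \ln s\cdot|s-1|^{-1-\alpha}\,ds$ and then exploit the inversive symmetry $s\leftrightarrow 1/s$. Under $s\mapsto 1/s$, $\ln s$ changes sign while the measure transforms as $ds/|s-1|^{1+\alpha}\mapsto s^{\alpha-1}\,ds/|s-1|^{1+\alpha}$, so averaging the two resulting representations of $K$ yields
$$2K = \int_0^\infty \frac{\ln s\,(1-s^{\alpha-1})}{|s-1|^{1+\alpha}}\,ds\,.$$
Since $\alpha-1<0$, the two factors $\ln s$ and $1-s^{\alpha-1}$ share the same sign for every $s>0$: both are negative on $(0,1)$, both vanish at $s=1$, and both are positive on $(1,\infty)$. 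Hence the integrand is pointwise nonnegative and not identically zero, so $K>0$ and the lemma follows.

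The main obstacle is finding a tractable sign argument for $K$: a direct split at $t=0$ produces two large competing contributions whose balance is opaque, while explicit evaluation via Beta functions or series expansions is tedious and does not make positivity manifest. The substitution $s\to 1/s$ is precisely what is needed to convert $K$ into an integral whose integrand is pointwise nonnegative, after which the conclusion is immediate.
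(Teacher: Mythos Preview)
Your proof is correct and, despite a different presentation, is essentially the same as the paper's. The paper reduces to the same one-dimensional integral (via polar coordinates rather than your Fubini/tangential integration, which is equivalent), then substitutes $1+t=e^y$ and exploits the oddness of $F(y)=y/|2\sinh(y/2)|^{1+\alpha}$ together with the weight $e^{\frac{y}{2}(1-\alpha)}$; your inversion $s\mapsto 1/s$ is precisely this $y\mapsto -y$ symmetry in multiplicative coordinates, and averaging the two copies yields the same pointwise-nonnegative integrand (indeed, under $s=e^y$ your factor $\ln s\,(1-s^{\alpha-1})$ becomes $2y\sinh\big(\tfrac{(1-\alpha)y}{2}\big)\,e^{-(1-\alpha)y/2}$).
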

\begin{proof}
We first change
variables, $\bz=\frac z {x_N}$, to find that
\begin{align*}
-I[\U](x)=\int_{x_N+z_N\geq0}\ln\Big(1+\frac z x\Big)\
\frac{dz}{|z|^{N+\alp}}=\frac1{x_N^\alp}
\int_{\bz_N\geq-1}\ln(1+\bz)\ \frac{d\bz}{|\bz|^{N+\alp}}.
\end{align*}
Now we are done if we can prove that
$$J=\int_{\bz_N\geq-1}\ln(1+\bz)\
\frac{d\bz}{|\bz|^{N+\alp}}>0.$$

When $N=1$, we take $1+\bar z=e^y$ and note that simple
computations lead to
\begin{align*}
J=\int_{-\infty}^\infty y \frac{e^{y}dy}{|e^y-1|^{1+\alp}}
=\int_{-\infty}^\infty F(y)e^{\frac y2(1-\alp)}dy \quad \text{where}\quad F(y)=\frac{y}{|2\sinh\frac y2|^{1+\alp}}.
\end{align*}
Since $F(y)$ is odd and $1-\alp>0$,
$$0<-F(-y)e^{-\frac  y2(1-\alp)}< F(y)e^{\frac
  y2(1-\alp)}\qquad\text{for}\qquad y>0,$$
and hence by symmetry $J>0$.

In the case $N>1$ we introduce polar coordinates $z=ry$ where $r\geq0$
and $|y|=1$, and we let $dS(y)$ be the surface measure of the sphere
$|y|=1$ in $\R^N$. We then find that
\begin{align*}
J=\Big(\int_{|y|=1,y_N>0}\int_0^\infty+\int_{|y|=1,y_N<0}\int_0^{-\frac1{y_N}} \Big) \ln(1+ry_N) \frac{r^{N-1}dr\,dS(y)}{r^{N+\alp}}.
\end{align*}
The change of variables $s=y_Nr$ then leads to
\begin{align*}
J&=\Big(\int_{|y|=1,y_N>0}\int_0^\infty+\int_{|y|=1,y_N<0}\int_0^{-1}
\Big) \sgn(y_N)|y_N|^\alp \ln(1+s) \frac{ds}{|s|^{1+\alp}}\,dS(y)\\
&=\int_{|y|=1,y_N>0}|y_N|^\alp\ dS(y)\ \int_{-1}^\infty\ln(1+s) \frac{ds}{|s|^{1+\alp}}.
\end{align*}
The lemma now follows from the computations for we did for  $N=1$.
\end{proof}

We now generalize to a much larger
class of integral operators with L\'evy measures $\mu$ such that
$d\mu(z)\sim\frac{dz}{|z|^{N+\alp}}$ near $|z|=0$. In this case the
blowup supersoution will be the modified log-function $\U_R$
defined as
$$\U_R(x)=\bar\U_R(x_N)\qquad\text{for}\qquad x\in\dom,\quad R>1,$$
where $\bar\U_R$ is a (nonnegative) monotone decreasing
$C^\infty(0,\infty)$ function such that
$$\bar\U_R(s)=\begin{cases}-\ln(s)+\frac32\ln R&\text{if } 0<s\leq R,\\
0 &\text{if } s\geq {2R}.
\end{cases} $$
The main result in this appendix says that $\U_R$ will
be the blowup ``supersolution'' of assumption (U) provided
the L\'evy measure $\mu$ also satisfies:\\[-0.3cm]

\begin{itemize}
\item[$(U)'$] For all $R,\eps>0$ there are $r,c,K>0$ and
  $\alp\in(0,1)$ such that
\begin{align*}
(a) &\int_{-1<z_N\leq R}\ln(1+z_N)
\Big(s^\alp\mu(sdz)-\frac{c\,dz}{|z|^{N+\alp}}\Big)>-\eps &&\text{for}\quad
s\in(0,r),\\
(b)&\int_{-1<z_N\leq -\frac 1{2}}\ln(1+z_N)\
\mu(sdz)\geq -K &&\text{for}\quad
s\in(r,R).
\end{align*}
\end{itemize}
\medskip

\begin{theorem}
\label{ThmB}
Assume $(\mathrm{H}_\eta^6)$, $(\mathrm{H}_\mu)'$, and $(U)'$
hold. Then the function $\U_R$ defined
above satisfy the assumptions in (U). In particular, there is
$R_0>0$ such that for any $R>R_0$ there is $K_R\geq0$ such that
\begin{align*}
-I[\U_R](x)&\geq-K_R\quad\text{in}\quad \{x:0<x_N\leq R\}.
\end{align*}
\end{theorem}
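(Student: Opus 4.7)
The plan is to verify the two properties of $(U)$ separately. The lower bound $\U_R \geq 1/\omega_R$ follows directly from the construction of $\bar\U_R$, while the integral estimate is obtained by splitting the analysis in the parameter $s := x_N \in (0,R]$ into a small regime, controlled by $(U)'(a)$ combined with Lemma \ref{alph1}, and a moderate regime, controlled by $(U)'(b)$ together with the compensator identity of Lemma \ref{cor:compensator}.

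For the lower bound property, I would first slightly modify $\bar\U_R$ outside $[0,2R]$ so that it stays strictly positive (say $\bar\U_R \equiv c_0 > 0$ for $s$ large, with a monotone smooth interpolation), since $(U)$ requires a genuinely \emph{positive} function. Setting $\omega_R(t) := 1/\bar\U_R(t)$, one obtains a continuous nonnegative function, strictly increasing near $0$ (since $\bar\U_R$ is strictly decreasing there) with $\omega_R(0) = 0$ (since $\bar\U_R(0^+) = +\infty$), and the desired inequality $\U_R(x) \geq 1/\omega_R(x_N)$ then holds with equality throughout $\dom$.

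For the main estimate, $(\mathrm{H}_\eta^6)$ and one-dimensionality of $\U_R$ give
$$-I[\U_R](x) = \int_{y_N > -s} \bigl[\bar\U_R(s) - \bar\U_R(s+y_N)\bigr]\, d\mu(y), \qquad s = x_N.$$
I would apply $(U)'$ with $\eps := cJ/4$, where $J > 0$ is the positive constant of Lemma \ref{alph1}, and (shrinking $r$ if necessary) assume $r(1+R) \leq R$; the threshold $R_0$ is chosen large enough that $\int_{-1 < w_N \leq R} \ln(1+w_N)\, c\,dw/|w|^{N+\alp} \geq cJ/2$, which is possible because this truncated integral converges to $cJ$ as $R \to \infty$. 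For $s \in (0,r)$, the rescaling $y = sw$ sends $\{-1 < w_N \leq R\}$ into $\{s+y_N \in (0,R]\}$, where the integrand reduces to $\ln(1+w_N)$; then $(U)'(a)$ combined with Lemma \ref{alph1} bounds this contribution from below by $s^{-\alp}\,cJ/4 > 0$, while on the complement $\{y_N > Rs\}$ the integrand is nonnegative because $\bar\U_R$ is decreasing and $y_N > 0$ there. Hence $-I[\U_R](x) \geq 0$ in this regime.

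For $s \in [r,R]$, I would split the integration into $A_1 := \{-s < y_N \leq -s/2\}$, $A_2 := \{|y| < r/4\}$, and $A_3 := \{y_N > -s/2,\ |y| \geq r/4\}$ (noting $A_2 \subset \{y_N > -s/2\}$ since $r/4 < s/2$). On $A_1$ the integrand equals $\ln(1+y_N/s)$ and $(U)'(b)$ gives the lower bound $-K$; on $A_2$, $\bar\U_R$ is $C^2$ on a neighborhood of $[s/2, s+r/4]$, and Lemma \ref{cor:compensator} bounds this contribution by a constant $C(R,r)$ independent of the near-zero behavior of $\mu$; on $A_3$ the integrand is uniformly bounded (since $s+y_N \geq r/2$ keeps $\bar\U_R$ away from its singularity) and $\mu(\{|y| \geq r/4\}) < \infty$ by $(\mathrm{H}_\mu')$. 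The main technical obstacle is precisely this case: one must simultaneously control the logarithmic singularity of $\bar\U_R$ at $y_N = -s$ --- a point that can lie at any distance from the origin of $\mu$ --- via $(U)'(b)$, and the singularity of $\mu$ at $y = 0$ via the compensator estimate, with the cut-off $\delta = r/4$ engineered so that $(U)'(b)$ applies exactly to $A_1$ and Lemma \ref{cor:compensator} applies to $A_2$. Summing the three contributions produces the required lower bound $-I[\U_R](x) \geq -K_R$ throughout $\{0 < x_N \leq R\}$.
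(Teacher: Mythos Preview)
Your proposal is correct and follows essentially the same strategy as the paper: split $s=x_N$ into a small regime $(0,r)$ handled by rescaling plus $(U)'(a)$ and Lemma~\ref{alph1}, and a moderate regime $[r,R]$ handled by decomposing the integration domain into a piece near the logarithmic singularity (controlled by $(U)'(b)$), a piece near the origin of $\mu$, and a bounded remainder.

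The only notable difference is your treatment of $A_2$. You invoke Lemma~\ref{cor:compensator}, which is valid but heavier than needed: under $(\mathrm{H}_\mu')$ one has $\int_{|z|<1}|z|\,d\mu<\infty$, so the paper simply uses that $t\mapsto\ln(1+t/s)$ is Lipschitz on $(-\tfrac12,\tfrac{R}{r})$ to bound the near-origin piece by $\big(\sup|\partial_t\ln(1+t)|/s\big)\int_{|z|<1}|z|\,d\mu$, avoiding any appeal to $C^2$-regularity or compensators. Your cut-off at $|y|=r/4$ versus the paper's at $|z|=1$ is cosmetic. Your explicit remark on forcing strict positivity of $\bar\U_R$ (so that $\omega_R=1/\bar\U_R$ is genuinely defined) is a point the paper glosses over.
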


Before we prove this result, we show how assumption
$(U)'$ can be checked when $\mu$ is L\'evy measure whos
restriction to $\{z:|z_N|\leq r\}$ has a density
\begin{align}
\label{g_def}
\frac{d\mu}{dz}=\frac{g(z)}{|z|^{N+\alp}}\quad\text{where}\quad
\begin{cases}
\alp\in(0,1),\\[0.2cm]
0\leq g\in L^\infty_{\mathrm{loc}}(\R^N)\cap
L^1(\R^N;\frac{dz}{1+|z|^{N+\alp}}),\\[0.2cm]
\lim_{z\ra 0}g(z)=g(0)>0.
\end{cases}
\end{align}
Note that the $L^1$ assumption makes $\frac{d\mu}{dz}$ integrable near
infinity and that $L^\infty(\R^N)\subset L^1(\R^N;\frac{dz}{1+|z|^{N+\alp}})$
for $\alp>0$.

\begin{corollary}
\label{g_cor}
If $\mu$ has a density satisfying \eqref{g_def}, then the function
$\U_R$ defined above satisfy the assumptions in (U).
\end{corollary}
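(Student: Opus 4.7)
The plan is to verify the hypotheses $(\mathrm{H}_\mu')$ and $(U)'$ of Theorem \ref{ThmB} for the given class of measures, then invoke that theorem directly. Verifying $(\mathrm{H}_\mu')$ is immediate: the integrability $\int (|z|\wedge 1)\,d\mu<\infty$ follows from $g\in L^\infty_{\mathrm{loc}}$ combined with $\alp<1$ near $z=0$ and from $g\in L^1(dz/(1+|z|^{N+\alp}))$ at infinity, and the no-atom condition on horizontal hyperplanes holds trivially because $\mu$ is absolutely continuous. The key observation for $(U)'$ is the scaling identity $s^\alp\mu(s\,dz)=g(sz)|z|^{-(N+\alp)}dz$, which suggests the choice $c:=g(0)>0$.

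For $(U)'(a)$, with this choice of $c$ the integrand becomes $\ln(1+z_N)(g(sz)-g(0))|z|^{-(N+\alp)}dz$, and I would prove the integral tends to $0$ as $s\to 0^+$. Given $\eps>0$, fix $M$ large enough that $\int_{|z|>M,\,-1<z_N\leq R}|\ln(1+z_N)||z|^{-(N+\alp)}dz<\eps$; this is possible since $|\ln(1+z_N)|/|z|^{N+\alp}$ is integrable near $z=0$ (using $\alp<1$ and $|\ln(1+z_N)|\leq |z_N|$) and at infinity (using $\alp>0$). On $\{|z|\leq M\}$ the continuity of $g$ at $0$ gives $|g(sz)-g(0)|<\eps$ uniformly once $s$ is small, producing an $O(\eps)$ contribution. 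On the tail $\{|z|>M\}$ the $g(0)$ piece is $O(\eps)$ by the choice of $M$; for the $g(sz)$ piece, the substitution $w=sz$ transforms the integral into
\[s^\alp\int_{|w|>sM,\,-s<w_N\leq sR}|\ln(1+w_N/s)|\,g(w)|w|^{-(N+\alp)}dw,\]
which I then split into a ``bounded-log'' region $\{w_N/s>-1+\nu\}$ and a narrow singular strip $\{w_N/s\in(-1,-1+\nu]\}$. On the first region $|\ln(1+w_N/s)|$ is bounded by a constant depending only on $\nu,R$; combining $g\in L^\infty(B_1)$ for $|w|\leq 1$ with $g\in L^1(dz/(1+|z|^{N+\alp}))$ for $|w|>1$ yields an upper bound of the form $C(\nu,R)(M^{-\alp}+s^\alp)$. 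On the narrow strip, the substitution $w_N=s(-1+\tau)$ produces a factor $s^{\alp+1}$ which absorbs the singular spatial integral $(sM)^{-\alp-1}$, leaving a contribution $O(\nu|\ln\nu|/M^{\alp+1})$.

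For $(U)'(b)$, fix any $0<r<R$. Because $s\in(r,R)$ keeps $s^\alp$ bounded away from $0$ and $\infty$ and the strip $\{-1<z_N\leq -1/2\}$ stays at distance $\geq 1/2$ from the origin, after the substitution $w=sz$ the integral becomes
\[\int_{-s<w_N\leq -s/2}|\ln(1+w_N/s)|\,g(w)|w|^{-(N+\alp)}dw.\]
The factor $|\ln(1+w_N/s)|$ is integrable in $w_N$ since $\int_{-1}^{-1/2}|\ln(1+\tau)|d\tau<\infty$. Splitting the $w'$-integration into $\{|w'|\leq 1\}$ (where $g$ is bounded by $L^\infty_{\mathrm{loc}}$ and $|w|\geq r/2$) and $\{|w'|>1\}$ (where $|w'|^{-(N+\alp)}\leq C(1+|w|^{N+\alp})^{-1}$ and the weighted $L^1$-bound on $g$ applies via Fubini) gives a uniform constant $K=K(r,R)$.

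The main obstacle is part (a): since $g$ is only continuous at the origin and may be unbounded globally, controlling the $g(sz)$ contribution on the tail $\{|z|>M\}$ as $s\to 0$ is the subtle point. The resolution is to change variables to $w=sz$, after which $g(w)$ itself is the object being estimated and the weighted $L^1$-hypothesis becomes directly applicable, while the logarithmic singularity of $\ln(1+w_N/s)$ at $w_N=-s$ is confined to a strip whose small measure is absorbed by the $s^{\alp+1}$ factor produced by the scaling.
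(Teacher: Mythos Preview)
Your overall strategy---verify $(\mathrm{H}_\mu')$ and $(U)'$ and then invoke Theorem~\ref{ThmB}, choosing $c=g(0)$ via the scaling identity $s^\alp\mu(s\,dz)=g(sz)|z|^{-(N+\alp)}dz$---is exactly the paper's. Where you differ is in part~(a): the paper simply pulls out $\sup|g(sz)-g(0)|$ over the whole strip $\{-1<z_N\le R\}$ and uses continuity of $g$ at $0$, which is legitimate only when that strip is bounded (i.e.\ $N=1$) or when $g\in L^\infty(\R^N)$. Your ball/tail decomposition with the change of variables $w=sz$ is the right idea to cover the stated $L^\infty_{\mathrm{loc}}$ hypothesis, and your treatment of the bounded-log region is correct (the $|w|>1$ piece there does give the $C(\nu,R)s^\alp$ you claim, via the weighted $L^1$-bound).

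There is, however, a genuine gap in how you handle the interaction of the logarithmic singularity with unbounded $g$. In part~(b), on $\{|w'|>1\}$ you write that ``the weighted $L^1$-bound on $g$ applies via Fubini'': but $g\in L^1\bigl(\R^N;\tfrac{dw}{1+|w|^{N+\alp}}\bigr)$ only tells you that $h(w_N):=\int_{|w'|>1}\tfrac{g(w',w_N)}{1+|w|^{N+\alp}}\,dw'$ is in $L^1(\R)$, not that it is bounded. Since $|\ln(1+w_N/s)|$ is unbounded near $w_N=-s$, you cannot separate the two factors, and neither H\"older nor Fubini closes the estimate. The same issue lurks in your narrow-strip estimate for~(a): the bound $O(\nu|\ln\nu|/M^{\alp+1})$ comes from controlling $\int_{|w'|\gtrsim sM}|w|^{-(N+\alp)}dw'$, which tacitly treats $g$ as a constant; on the portion of the narrow strip with $|w|>1$ you again face $\int|\ln\tau|\,h(s(-1+\tau))\,d\tau$ with only $h\in L^1$. (The paper's one-line proofs of (a) and (b) share this blind spot for $N\ge 2$.) Your argument is complete if you add the assumption $g\in L^\infty(\R^N)$---which covers all the examples actually used---but under the bare $L^\infty_{\mathrm{loc}}\cap L^1$-weighted hypothesis this step does not go through as written.
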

\begin{proof}
By Theorem \ref{ThmB} we have to check that $(U)'$ holds. Part (b)
follows from H\"older's inequality since $\ln(1+s)\in
L^1(-1,0)$. Now we check part (a). Note that
$$s^\alp\mu(sdz)-\frac{c\,dz}{|z|^{N+\alp}}=\frac{g(sz)-c}{|z|^{N+\alp}}dz.$$
Now choose $c=g(0)$ and write
\begin{align*}
&\int_{-1<z_N\leq R}\ln(1+z_N)
\Big(s^\alp\mu(sdz)-\frac{c\,dz}{|z|^{N+\alp}}\Big)\\
&\geq
-\sup_{-s<r<Rs}|g(r)-g(0)| \int_{-1<z_N\leq R}|\ln(1+z_N)|\frac{dz}{|z|^{N+\alp}}.
\end{align*}
Part (a) now follows since the last integral is finite for any $R>0$,
while the $\sup$-term goes to zero as $s\ra0$ by continuity of $g$ at $z=0$.
\end{proof}

\begin{remark}
\label{RemA1}
Assumption \eqref{g_def} also includes measures like
$$\mu=\sum_{i=1}^{M_1}\mu_i,$$
where  $\mu_i$ have densities satifying \eqref{g_def} for different
$g_i$ and $\alp_i$. To see this, simply take $\alp=\max_i\alp_i$ and
$g(z)=\sum_{i=1}^Mg_i(z)|z|^{\alp-\alp_i}$ and note that $g\in
L^1(\R^N;\frac{dz}{1+|z|^{N+\alp}})$. We can even relax this assumption
to include measures with zero or arbitrary negative $\alp_i$ provided
that $\max_i\alp_i$ remains in $(0,1)$. Finally we mention that we need
some assumption to insure that $\mu$ does not give to much mass to
the negative part of the integral $-I[\U_R]$. In \eqref{g_def} we do
this by requiring continuity at $0$ of $g$, but a carefull reader can
extend this assumption to allow some discontinuities at $0$.
\end{remark}

\begin{remark}
In assumption $(U)'$ it is only the restriction of $\mu$ to the set
$$\{z:-r<z_N<Rr\}\cap\{z:-1<z_N<-\frac12\}$$
that plays any role. Hence if $\mu$ satisfies $(U)'$, by taking
$\bar r$ small enough, so will
$\mu+\bar\mu$
for any measure $\bar\mu$ satisfying
$$\int_{|z|>0}d\bar\mu<\infty\qquad\text{and}\qquad
\supp \bar\mu \cap \{z:-1<z_N<\bar r\}=\emptyset\quad\text{for some
  $\bar r>0$}.
$$
E.g. the delta-measure $\bar\mu=\sum_{i=1}^{M}\delta_{x^i}$ is ok if $x^i_N>0$.
\end{remark}

\begin{proof}[Proof of Theorem \ref{ThmB}]
First note that there is an $R_0>0$ such that
\begin{align*}
J_{R_0}:=\int_{-1<z_N\leq R_0} \ln(1+z_N)
  \frac{dz}{|z|^{N+\alp}}>0.
\end{align*}
Indeed, in the proof of Lemma \ref{alph1}, we showed that
$J_\infty=J>0$. The result then follows by the Dominated Convergence
Theorem since the integrand is positive for $z_N>0$ and integrable.

For any $R>R_0$, we  note immediatly that $\U_R$ is a nonnegative
decreasing function which trivially satisfies the second part of (U)
with $\omega_R(s)=\frac1{\bar\U_R(s)}$.
We will now check that $\U_R$ has the appropriate supersolution
properties and hence complete the proof that $\U_R$ satisfies (U)
under $(U)'$ . By the definition of $\U_R$, we can write
\begin{align*}
-I[\U_R](x)&=\int_{-x_N<z_N\leq Rx_N} \ln\Big(1+\frac {z_N}{x_N}\Big)\
\mu(dz)+ I_R\\
&=\int_{-1<y_N\leq R} \ln(1+y_N)\ \mu(x_N\,dy)+I_R,
\end{align*}
where $I_R=-\int_{z_N>Rx_N} \U_R(x+z)-\U_R(x)\ \mu(dz)>0$ since
$\U_R$ is decreasing. By assumption $(U)'$  we then find a $r>0$ such
that for $x_N\in(0,r)$,
\begin{align*}
-x_N^{\alp}I[\U_R](x)&\geq J_R+\int_{-1<y_N\leq R} \ln(1+y)\
\Big(x_N^\alp\mu(x_N\,dy)-\frac{dy}{|y|^{1+\alp}}\Big)\geq \frac12J_R>0.
\end{align*}
When $x_N\in(r,R)$, another application of $(U)'$  along with
$(\mathrm{H}_\mu)'$ leads to
\begin{align*}
&-I[\U_R](x)\\
&\geq\Big(\int_{-x_N<z_N<-\frac {x_N}2}+\int_{-\frac{x_N}2<z_N<R\, \cap
  |z|<1}+\int_{-\frac{x_N}2<z_N<R\, \cap|z|>1}\Big)\ln\Big(1+\frac
z{x_N}\Big)  d\mu(dz)\\
&\geq -K - \underset{s\in(-\frac 12,\frac R r)}{\max}\frac{|\del_s\ln(1+s )|}{|x_N|} \int_{|z|<1}|z|
d\mu(z) -\underset{s\in(-\frac 12,\frac R r)}{\max}|\ln(1+s)|\int_{|z|>1}d\mu(z).
\end{align*}
Since this last expression is bounded for $x_N\in(r,R)$, this
completes the proof.
\end{proof}

% \begin{remark}
% {\bf ERJ: This remark has to be improved... in fact I suggest we drop
%   it completely}
% It is possible to give a more general assumption than $(H\mu)_*$ :
% \begin{itemize}
% \item[(A1')] There exists a function $f:(-1,\infty)\ra\R$ satisfying
% \begin{itemize}
% \item [(i)] $f\in C^2(-1,\infty)$, $f(0)=0$,
%   $\displaystyle\lim_{s\ra-1}f(s)=+\infty$,
% \item [(ii)] $f\in L^1((-1,-1+r),\mu)\cap L^\infty(-1+r,\infty)$ for some
%   $r>0$,
% \item [(iii)] there is $R>0$ such that $$\int_{-1<z_N}
%   f(z)\ d\mu(z)\geq\int_{-1<z_N<R}f(z)\ d\mu(z)>0,$$
% \item [(iv)]  for all $R,\eps>0$ there
%   is $r,K>0$ such that
% \begin{align*}
% &\int_{-1<z_N\leq R}f(z_N)
% \Big(s^\alp\mu(sdz)-\frac{dz}{|z|^{N+\alp}}\Big)>-\eps&&\text{for}\quad
% s\in(0,r),\\
% &\int_{-1<z_N\leq -\frac r {2R}}f(z_N)\
% \mu(sdz)\geq -K &&\text{for}\quad
% s\in(r,R).
% \end{align*}
% \end{itemize}
% \end{itemize}
% In this case the function $\bar\U\in C^2(0,\infty)$ defined by
% $$\bar\U(1+s)-\bar\U(1)=-f(s)\quad\text{whenever}\quad s>-1,$$
% gives the sought after blowup solution given by (U) of Section
% \ref{Sec:alp1} by setting $\U(z)=\bar\U(z_N)$.

% It is also possible to change $\frac{dz}{|z|^\alp}$ to some other
% measure as long as similar properties holds.... ??

% In 1D some models with modulated jumps are included in the above results
% via change of variables .... e.g. CGMY models with $j(z)=e^z-1$ (need
% $j$ invertible...).
% \end{remark}

\section{Estimates for the censored case II.}

%{\color{purple} I have corrected some misprints and removed the remarks of Espen because %they were correct !}

\begin{lemma}
\label{alp-lem}
Let  $\mu(dz)=\frac{dz}{|z|^{N+\alp}}$, $\alp\in(1,2)$,  and define
$\tilde\theta(x)=|x_N|^\beta$. If $\beta\in(0,1)$ and $x\in\dom$, then
$$I[\tilde\theta](x)=P.V.\int_{x_N+z_N\geq0}\tilde\theta(x+z)-\tilde\theta(x)\ \mu(dz)\begin{cases}
>0 & \text{if}\quad \beta>\alp-1,\\
=0 & \text{if}\quad \beta=\alp-1,\\
<0 & \text{if}\quad \beta<\alp-1.
 \end{cases} $$
\end{lemma}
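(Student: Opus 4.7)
The plan is to reduce the question to the sign of a single explicit one-dimensional integral --- first by scaling out $x_N$ and integrating out $z'$, then by a symmetrizing change of variables that removes the principal value --- and finally to read off the sign from a convexity argument.

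First, under $z = x_N w$, homogeneity yields $I[\tilde\theta](x) = x_N^{\beta-\alp} J$ with
$$J := \mathrm{P.V.}\!\!\int_{w_N>-1}(|1+w_N|^\beta-1)\frac{dw}{|w|^{N+\alp}},$$
so the sign of $I[\tilde\theta](x)$ is the sign of the $x$-independent quantity $J$. I would then rewrite $J$ as an absolutely convergent integral by subtracting the odd compensator $\beta w_N \mathbf{1}_{|w|<1/2}$, whose P.V. integral vanishes by $w_N\to -w_N$ symmetry of the ball $|w|<1/2\subset\{w_N>-1\}$; the remainder is $O(|w|^{2-N-\alp})$ near the origin, hence integrable because $\alp<2$. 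Fubini combined with the classical identity $\int_{\R^{N-1}}(|w'|^2+s^2)^{-(N+\alp)/2}dw' = C_{N,\alp}|s|^{-(1+\alp)}$ with $C_{N,\alp}>0$ then gives $J = C_{N,\alp}\,K$, where
$$K := \mathrm{P.V.}\!\int_{-1}^\infty (|1+s|^\beta-1)\frac{ds}{|s|^{1+\alp}}.$$

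Next I would eliminate the remaining principal value by inversion. Setting $t=1+s$ moves the singularity to $t=1$; substituting $u=1/t$ on the half-line $t>1$ gives
$$\int_{1+\eps}^\infty\frac{t^\beta-1}{(t-1)^{1+\alp}}\,dt = \int_0^{1/(1+\eps)}\frac{u^{\alp-1-\beta}-u^{\alp-1}}{(1-u)^{1+\alp}}\,du.$$
Combining with the $t\in(0,1-\eps)$ piece and sending $\eps\to 0$ (the upper-limit mismatch $|1/(1+\eps)-(1-\eps)|=O(\eps^2)$ contributes $O(\eps^{2-\alp})\to 0$ since $\alp<2$), one reaches the genuine Lebesgue integral
$$K = \int_0^1 \frac{\Phi(t)}{(1-t)^{1+\alp}}\,dt,\qquad \Phi(t):=t^\beta + t^{\alp-1-\beta} - t^{\alp-1} - 1.$$
A Taylor expansion at $t=1$ confirms that both the constant and linear coefficients of $\Phi$ vanish, so $\Phi(t)=O((t-1)^2)$ and the integrand is $O((1-t)^{1-\alp})$, which is integrable since $\alp<2$; integrability at $t=0$ requires only $\beta<\alp$, automatic from $\beta<1<\alp$.

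Finally, the sign of $K$ follows from convexity. For each fixed $t\in(0,1)$, the map $\beta\mapsto t^\beta+t^{\alp-1-\beta}$ is strictly convex (each summand has positive second derivative $(\ln t)^2 t^{\,\cdot}>0$), symmetric under $\beta\leftrightarrow\alp-1-\beta$, and takes the common value $1+t^{\alp-1}$ at the symmetric endpoints $\beta=0$ and $\beta=\alp-1$. Hence $\Phi(t)<0$ strictly for $\beta\in(0,\alp-1)$, $\Phi(t)\equiv 0$ at $\beta=\alp-1$, and $\Phi(t)>0$ strictly for $\beta\in(\alp-1,1)$. Integrating against the positive weight $(1-t)^{-(1+\alp)}$ transfers these strict signs to $K$, hence to $J$, and thereby to $I[\tilde\theta](x)$. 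The main technical obstacle is rigorous bookkeeping of the P.V. through the two reductions; both are handled by the same compensator idea (subtract an odd linear term whose integral vanishes by symmetry, then invoke Lebesgue integrability of the remainder), and the algebraic cancellation of the linear part of $\Phi$ at $t=1$ is precisely what singles out $\beta=\alp-1$ as the critical exponent.
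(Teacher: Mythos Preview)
Your proof is correct and takes a genuinely different route from the paper in the one-dimensional analysis. Both arguments begin with the same scaling $z=x_Nw$ to reduce to a dimensionless constant, and both reduce $N>1$ to $N=1$ (the paper via polar coordinates, you via the identity $\int_{\R^{N-1}}(|w'|^2+s^2)^{-(N+\alp)/2}dw'=C_{N,\alp}|s|^{-(1+\alp)}$; these are equivalent). The divergence is in how the sign of the resulting one-dimensional principal value $K=\mathrm{P.V.}\int_{-1}^\infty(|1+s|^\beta-1)|s|^{-1-\alp}\,ds$ is determined. The paper substitutes $1+s=e^x$, obtaining
\[
K=\mathrm{P.V.}\int_{-\infty}^\infty\frac{2\sinh(\beta x/2)}{|2\sinh(x/2)|^{1+\alp}}\,e^{x(1+\beta-\alp)/2}\,dx,
\]
and reads off the sign directly: the first factor is odd, so the integral vanishes when the exponential weight is trivial ($\beta=\alp-1$), and is biased positive or negative according to the sign of $1+\beta-\alp$. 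Your inversion $u=1/t$ instead folds $(1,\infty)$ onto $(0,1)$ and produces the absolutely convergent representation $K=\int_0^1\Phi(t)(1-t)^{-1-\alp}\,dt$ with $\Phi(t)=t^\beta+t^{\alp-1-\beta}-t^{\alp-1}-1$, whose sign follows from strict convexity of $\beta\mapsto t^\beta+t^{\alp-1-\beta}$. Your formula makes the symmetry $\beta\leftrightarrow\alp-1-\beta$ explicit and avoids the principal value altogether once the linear cancellation $\Phi(1)=\Phi'(1)=0$ is checked; the paper's hyperbolic substitution is shorter and mirrors the analogous computation for $\alp<1$ elsewhere in the appendix.
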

\begin{proof}%[Proof of Lemma \ref{alp-lem}]
First let $\beta\in(0,1)$ and $N=1$, and define
$\tilde\theta(x)=|x|^\beta$.
Note that the change of variables $z=x\bz$
followed by $1+\bz=e^s$ reveals that
\begin{align*}
I[\tilde\theta](x)&=P.V.\int_{x+z\geq0}|x+z|^\beta-|x|^\beta \
\frac{dz}{|z|^{1+\alp}}\\
&=|x|^{\beta-\alp}\ P.V.\int_{\bar z\geq
  -1}|1+\bz|^\beta -1 \ \frac{d\bz}{|\bz|^{1+\alp}}\\
&=|x|^{\beta-\alp}\ P.V.\int_{-\infty}^\infty\frac{2\sinh\frac{\beta s}{2}}{|2\sinh
  \frac s2|^{1+\alp}}e^{\frac s2(1+\beta-\alp)}dx.
\end{align*}
When $\beta=\alp-1$, the integrand is odd and hence the integral is
zero. For $\beta>\alp-1$ ($\beta<\alp-1$) the exponential factor makes
the integral positive (negative). Hence when $\beta+1-\alp=0$, $>0$ or
$<0$, then $I[\tilde\theta]=0$, $>0$, or $<0$ respectively.

When $N>1$, a similar result holds for $\tilde\theta(x)=|x_N|^{\beta}$. The
idea is to work in polar coordinates. We set $x=ry$ for $r\geq0$ and
$|y|=1$ and let $dS(y)$ denote the surface area element of the $N$-sphere
$|y|=1$. We also use the change of variables $ry_N=\bar rx_N$.
\begin{align*}
&I[\tilde\theta](x)\\
&=\int_{x_N+z_N\geq0}|x_N+z_N|^\beta-|x_N|^\beta \
\frac{dz}{|z|^{N+\alp}}\\
&=\int_{|y|=1}\int_{x_N+ry_N>0}|x_N+ry_N|^\beta-|x_N|^\beta \
\frac{r^{N-1}dr\,dS(y)}{r^{N+\alp}}\\
&=\Bigg(\int_{|y|=1,y_N>0}\int_0^\infty+
\int_{|y|=1,y_N<0}\int_0^{-\frac{x_N}{y_N}}\Bigg)
(\cdots)\frac{dr\,dS(y)}{r^{1+\alp}} \\
&=\Bigg(\int_{|y|=1,y_N>0}\int_0^\infty-
\int_{|y|=1,y_N<0}\int_0^{-1}\Bigg)
|x_N|^{\beta-\alp}|y_N|^\alp\Big(|1+\bar r|^\beta -1\Big)
\frac{d\bar r\,dS(y)}{|\bar r|^{1+\alp}} \\
&=|x_N|^{\beta-\alp}\int_{|y|=1,y_N>0}|y_N|^{\alp}\ dS(y)\int_{-1}^\infty
|1+\bar r|^\beta -1\ \frac{d\bar r}{|\bar r|^{1+\alp}}.
\end{align*}
Here the first integral is just a positive constant while the second
integral is the same we found in the $N=1$ case. The conclusion is
therefore as in that case: When $\beta+1-\alp=0$, $>0$ or $<0$, then
$I[\tilde\theta]=0$, $>0$, or $<0$ respectively.

\end{proof}
Next we consider the two integrals
\begin{align*}
B(a)&=\int_{-a-1}^{-a}|1+z|^\beta-1 \ d\mu(z),\\
%\intertext{and}
G&=\int_{-1}^1 |1+z|^\beta + |1-z|^\beta -2\ d\mu(z),
\end{align*}
where $a>1$, $\beta\in(0,1)$, and $d\mu(z)=\frac{dz}{|z|^{N+\alp}}$
for $\alp\in[1,2)$.
\begin{proposition}
\label{PropBG}
If $\beta=\alp-1$ then there is a $\kappa>0$ such that
$$B(a)+G\leq-\kappa<0$$
for any $a>1$.
\end{proposition}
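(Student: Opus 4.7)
The plan is to reduce the proposition to a clean comparison of $B(a)$ with a fixed positive constant determined by $G$, using Lemma~\ref{alp-lem} as the key input.

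First, I would apply Lemma~\ref{alp-lem} at $x=1$ (with $N=1$ and $\beta=\alpha-1$) to get
$$0 \;=\; \mathrm{P.V.}\int_{z\ge -1}(|1+z|^\beta-1)\,\frac{dz}{|z|^{1+\alpha}}
 \;=\; \frac{G}{2} + \int_1^\infty \frac{(1+z)^\beta-1}{z^{1+\alpha}}\,dz,$$
where the $\tfrac{G}{2}$ term arises by symmetrising the $(-1,1)$ piece via $z\mapsto -z$. Setting $K := \int_1^\infty \frac{(1+z)^\beta-1}{z^{1+\alpha}}\,dz>0$, I obtain the identity $G=-2K$, and after the substitution $u=1+z$ also $K = \int_2^\infty \frac{u^\beta-1}{(u-1)^{1+\alpha}}\,du$. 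Since $K$ depends only on $\alpha,\beta$, the proposition reduces to proving $B(a)\le K$ for all $a>1$, because then $B(a)+G\le -K=:-\kappa<0$.

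Second, I would rewrite $B(a)$ by substituting $u=-1-z$ (so $u\in[a,a+1]$, $|1+z|=u-1$, $|z|=u$):
$$B(a) \;=\; \int_a^{a+1}\frac{(u-1)^\beta-1}{u^{1+\alpha}}\,du,$$
and introduce $f(u):=\frac{u^\beta-1}{(u-1)^{1+\alpha}}$, so $K=\int_2^\infty f(u)\,du$. On $\{u\ge 2\}$ we have both $(u-1)^\beta\le u^\beta$ (since $\beta>0$) and $u^{1+\alpha}\ge (u-1)^{1+\alpha}$ (since $u\ge u-1\ge 1$), with both sides nonnegative, hence the pointwise bound
$$\frac{(u-1)^\beta-1}{u^{1+\alpha}}\;\le\; f(u) \qquad \text{for every } u\ge 2.$$

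Third, I would split into two cases. If $a\ge 2$, then $[a,a+1]\subset[2,\infty)$ and directly $B(a)\le \int_a^{a+1} f\le \int_2^\infty f = K$. If $1<a<2$, decompose $B(a)=\int_a^2+\int_2^{a+1}$. On $(a,2)$ we have $u-1<1$, so $(u-1)^\beta-1<0$: this contribution is nonpositive and may be discarded. On $(2,a+1)$ the integrand is nonnegative and dominated by $f$, so $\int_2^{a+1}\frac{(u-1)^\beta-1}{u^{1+\alpha}}\,du \le \int_2^\infty f = K$. In both cases $B(a)\le K$, whence $B(a)+G\le -K$ and we may take $\kappa=K>0$.

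The only subtlety is that $B(a)$ is not of definite sign when $a$ is close to $1$, so a naive pointwise domination fails on the whole interval $[a,a+1]$; this is cured by splitting at $u=2$, which isolates and discards the negative part. The pointwise inequality against $f$ on $\{u\ge 2\}$ and the inclusion $[a,a+1]\subset [2,\infty)$ for $a\ge 2$ are what allow us to close the estimate uniformly in $a$ against the fixed quantity $K=-G/2$.
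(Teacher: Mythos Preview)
Your proof is correct and takes a genuinely different, more elementary route than the paper. The paper passes to hyperbolic coordinates via $1+z=\pm e^x$, proves the auxiliary Lemma~\ref{LemBG} that $B(2)<-G/2$ by comparing $\cosh\tfrac x2$ with $\sinh\tfrac{x+\ln2}2$ on $[0,\ln2]$, and then splits the integration interval $[\ln(a-1),\ln a]$ relative to $0$ and $\ln2$. You instead invoke Lemma~\ref{alp-lem} at $x=1$ to get the clean identity $G=-2K$ with $K=\int_2^\infty\frac{u^\beta-1}{(u-1)^{1+\alpha}}\,du>0$, and then show $B(a)\le K$ by the elementary pointwise domination $\frac{(u-1)^\beta-1}{u^{1+\alpha}}\le\frac{u^\beta-1}{(u-1)^{1+\alpha}}$ on $\{u\ge2\}$, discarding the nonpositive contribution on $\{a<u<2\}$ when $1<a<2$. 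This bypasses the hyperbolic substitutions and Lemma~\ref{LemBG} entirely, and yields the explicit constant $\kappa=K=-G/2$. One minor slip to fix: the substitution you name is $u=-1-z$, but the range $u\in[a,a+1]$ and the identities $|1+z|=u-1$, $|z|=u$ (and hence your formula for $B(a)$) are those of $u=-z$; with $u=-z$ everything you wrote is correct, so this is a typo rather than a gap.
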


By continuity of the integrals in $\beta$ we have the
following corollary:
\begin{corollary}\label{nec-res}
There is $\kappa>0$ and $\beta>\alp-1$ such that
$$B(a)+G\leq-\kappa\leq0$$
for any $a>1$.
\end{corollary}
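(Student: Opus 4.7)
The plan is to deduce the corollary from Proposition \ref{PropBG} by a perturbation argument in the exponent $\beta$. Writing the dependence on $\beta$ explicitly, the proposition gives a constant $\kappa_0>0$ such that $B(a;\alpha-1)+G(\alpha-1)\le -\kappa_0$ for every $a>1$. The whole point of the corollary is to upgrade this to some $\beta$ strictly greater than $\alpha-1$, still uniformly in $a>1$. This will follow if I can show that the map $\beta\mapsto B(a;\beta)+G(\beta)$ is continuous at $\beta=\alpha-1$, \emph{uniformly in $a>1$}: then picking $\beta>\alpha-1$ close enough to $\alpha-1$ gives $B(a;\beta)+G(\beta)\le -\kappa_0/2=:-\kappa$ for all $a>1$.

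Continuity of $G$ at $\beta=\alpha-1$ is immediate since $G$ does not depend on $a$ and the integrand is dominated uniformly in $\beta$ (near $z=0$ the Taylor expansion gives an $O(|z|^2)$ control, and away from $0$ everything is bounded). The only real work is the uniform continuity of $B(a;\beta)$ in $\beta$. Using the elementary inequality
\[
\bigl||1+z|^\beta-|1+z|^{\alpha-1}\bigr|\le |\beta-(\alpha-1)|\;\sup_{\gamma}|1+z|^\gamma\,\bigl|\log|1+z|\bigr|,
\]
valid for $\gamma$ between $\beta$ and $\alpha-1$, I plan to split the analysis of
\[
B(a;\beta)-B(a;\alpha-1)=\int_{-a-1}^{-a}\frac{|1+z|^\beta-|1+z|^{\alpha-1}}{|z|^{1+\alpha}}\,dz
\]
into the two regimes $a\in(1,2]$ and $a\ge 2$. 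For $a\in(1,2]$ one has $|1+z|\in[0,2]$ and $|z|\ge 1$, and $s\mapsto s^\gamma|\log s|$ stays bounded on $(0,2]$ for $\gamma$ in a small neighborhood of $\alpha-1\in(0,1)$; this gives $|B(a;\beta)-B(a;\alpha-1)|\le C|\beta-(\alpha-1)|$. For $a\ge 2$ one has $|1+z|\le a\le |z|$, so the integrand is bounded by $\log(a+1)/a^{1+\alpha-\gamma}$, and since $1+\alpha-\gamma$ stays strictly above $1$ for $\gamma$ near $\alpha-1$, integrating over an interval of length $1$ in $z$ gives a bound $\le C|\beta-(\alpha-1)|$ independent of $a$.

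Combining the two regimes yields a modulus of continuity for $B(\cdot;\beta)$ in $\beta$ which is uniform in $a>1$, and together with the continuity of $G$ this finishes the argument. The step I expect to require the most care is confirming the uniformity near $a=1$: there the lower endpoint of integration produces arbitrarily small values of $|1+z|$, where $\log|1+z|$ blows up, but this is precisely compensated by the factor $|1+z|^\gamma$ since $\gamma$ stays bounded away from $0$. Once this is in place, the corollary follows immediately by the continuity reduction described above.
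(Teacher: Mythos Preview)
Your proposal is correct and follows the same approach as the paper, which simply writes ``By continuity of the integrals in $\beta$'' and leaves all details to the reader. You have correctly identified and supplied the essential point the paper glosses over, namely that the continuity of $\beta\mapsto B(a;\beta)$ must be \emph{uniform} in $a>1$; your split into $a\in(1,2]$ (where the potential singularity at $|1+z|\to 0$ is absorbed by the factor $|1+z|^\gamma$ with $\gamma$ bounded away from $0$) and $a\ge2$ (where the decay $a^{-(1+\alpha-\gamma)}\log a$ with $1+\alpha-\gamma$ near $2$ gives the uniform bound) handles this cleanly.
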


To prove Proposition \ref{PropBG}, note that $z+1\leq0$ for
$z\in(-a-1,-a)$ ($a>1$) and that the
change of variable $1+z=-e^x$ in $B(a)$ leads to
$$B(a)=\int_{\ln(a-1)}^{\ln a}\frac{2\sinh\frac{\beta x}2}{|2\cosh
  \frac x2|^{1+\alp}}e^{\frac x2(1+\beta-\alp)}\
dx\overset{\beta=\alp-1}{=}\int_{\ln(a-1)}^{\ln
  a}\frac{2\sinh\frac{\beta x}2}{|2\cosh \frac x2|^{1+\alp}}\ dx.$$
For the $G$ integral we have the following result.
\begin{lemma}% If $\beta=\alp-1$, then
\begin{gather*}
G=2\ P.V.\int_{-\ln 2}^{\ln 2}\frac{2\sinh\frac{\beta x}2}{|2\sinh
  \frac x2|^{1+\alp}}e^{\frac x2(1+\beta-\alp)} dx\ -\ 2\int_{\ln
  2}^{\infty}\frac{2\sinh\frac{\beta x}2}{|2\sinh
  \frac x2|^{1+\alp}}e^{-\frac x2(1+\beta-\alp)} dx,\\
\intertext{and if $\beta=\alp-1$,}
G=-\ 2\int_{\ln
  2}^{\infty}\frac{2\sinh\frac{\beta x}2}{|2\sinh
  \frac x2|^{1+\alp}}dx.
\end{gather*}
\end{lemma}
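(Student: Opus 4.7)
The plan is to apply, to each half of the integrand in $G$, the same exponential substitution that worked in Lemma~\ref{alp-lem}, namely $1+z=e^x$ (and symmetrically $1-z=e^x$). With this change of variables, $z = 2e^{x/2}\sinh(x/2)$ so $|z|^{1+\alpha} = e^{x(1+\alpha)/2}|2\sinh(x/2)|^{1+\alpha}$, while $(1+z)^\beta = e^{x\beta}$ and $dz=e^x\,dx$. Rewriting $e^{x\beta}-1 = 2e^{x\beta/2}\sinh(\beta x/2)$ then yields the clean identity
\[
\int_{-1}^{1}\bigl[(1+z)^\beta-1\bigr]\frac{dz}{|z|^{1+\alpha}}
= \mathrm{P.V.}\int_{-\infty}^{\ln 2}
\frac{2\sinh(\beta x/2)\,e^{x(1+\beta-\alpha)/2}}{|2\sinh(x/2)|^{1+\alpha}}\,dx,
\]
understood as a principal value around $x=0$ because of the singularity of $|2\sinh(x/2)|^{-(1+\alpha)}$ there.

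Next, by the change of variable $z\mapsto -z$, the integral of $(1-z)^\beta-1$ on $(-1,1)$ against $d\mu$ equals the one above, which produces the overall factor of $2$ in $G$. Then I would split the domain $(-\infty,\ln 2)$ into $(-\ln 2,\ln 2)$ and $(-\infty,-\ln 2)$, and on the second piece perform the reflection $x\mapsto -x$. Using that $\sinh$ is odd and $|\sinh|$ is even, the tail contribution takes the form
\[
-\int_{\ln 2}^{\infty}\frac{2\sinh(\beta x/2)\,e^{-x(1+\beta-\alpha)/2}}{|2\sinh(x/2)|^{1+\alpha}}\,dx,
\]
and combining it with the P.V.\ integral over $(-\ln 2,\ln 2)$ gives exactly the first displayed formula of the lemma.

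For the special case $\beta = \alpha-1$, the exponent $\tfrac12(1+\beta-\alpha)$ in the first formula vanishes, so the integrand over $(-\ln 2,\ln 2)$ reduces to $2\sinh(\beta x/2)/|2\sinh(x/2)|^{1+\alpha}$, which is odd in $x$. Hence its principal value on the symmetric interval $(-\ln 2,\ln 2)$ is zero, leaving only the tail integral, which is precisely the asserted second formula.

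The main (mild) obstacle is bookkeeping the principal value: one must justify that the P.V.\ in the $z$-variable around $z=0$ matches the P.V.\ in the $x$-variable around $x=0$ under the substitution, and that the odd-function cancellation on $(-\ln 2,\ln 2)$ is legitimate in the P.V.\ sense. This is straightforward because $1+z=e^x$ is a smooth diffeomorphism sending $z=0$ to $x=0$ with derivative $1$, so the symmetric truncations $\{|z|>\varepsilon\}$ and $\{|x|>\varepsilon\}$ agree up to $O(\varepsilon^2)$, which is absorbed by the integrability of the regular part of the integrand.
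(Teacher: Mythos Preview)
Your proposal is correct and follows essentially the same route as the paper: the substitution $1+z=e^x$, the symmetry $z\mapsto -z$ to produce the factor $2$, the split of $(-\infty,\ln 2)$ into $(-\ln 2,\ln 2)$ and a tail that is reflected via $x\mapsto -x$, and the oddness argument when $\beta=\alpha-1$. Your handling of the principal-value bookkeeping (truncations in $z$ and $x$ differ by $O(\varepsilon^2)$, yielding an $O(\varepsilon^{2-\alpha})$ error) is exactly what the paper does, only the paper writes out the $Cb^{2-\alpha}$ bound explicitly.
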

\begin{proof}
First note that by symmetry
$$G=2\lim_{b\ra0^+}\int_{(-1,1)\setminus(-b,b)} |1+z|^\beta -1\
d\mu(z).$$
Then, since $1+z>0$ for $z\in(-1,1)$, the change of variable
$1+z=e^x$ leads to
$$G=2\lim_{b\ra0^+}\int_{(-\infty,\ln 2)\setminus(\ln(1-b),\ln(1+b))}\frac{2\sinh\frac{\beta x}2}{|2\sinh
  \frac x2|^{1+\alp}}e^{\frac x2(1+\beta-\alp)}\ dx.$$
Note that $\ln(1\pm b)=\pm b+O(b^2)$ and decompose the above integral
as follows,
\begin{align*}
&\int_{(-\infty,\ln 2)\setminus(\ln(1-b),\ln(1+b))}(\cdots)\ dx\\
&= \Big(\int_{(-\infty,\ln 2)\setminus(-b,+b)}+\int_{(-\infty,\ln
  2)\setminus(\ln(1-b),-b)}-\int_{(-\infty,\ln
  2)\setminus(\ln(1+b),b)}\Big) (\cdots)\ dx
\end{align*}
Now since $\sinh x=x+O(x^3)$, the last two integrals are bounded by
$Cb^2 \frac b{b^{1+\alp}}=Cb^{2-\alp}$ for $b\ll1$, and we have
$$G=2\lim_{b\ra0^+}\int_{(-\infty,\ln 2)\setminus(-b,b)}(\cdots)\
dx=2\Bigg( P.V. \int_{(-\ln 2,\ln 2)}+\int_{(-\infty,-\ln 2)}\Bigg)(\cdots)\
dx.$$
A change of variables in the last integral then gives the first
statement of the Lemma. The last part of the lemma follows since the
integrand is odd when $\beta=\alp-1$, and hence the integral over
$(-\ln 2,\ln 2)$ vanishes.
\end{proof}
 We also need the next lemma.

\begin{lemma}
\label{LemBG}
If $\beta=\alp-1$, then $B(2)<-\frac G2$.
\end{lemma}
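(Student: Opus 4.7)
The plan is to compute both sides explicitly in a common form via the identity from Lemma~\ref{alp-lem}, which says that when $\beta=\alp-1$ and $x>0$,
$$\mathrm{P.V.}\int_{x+z\geq 0}\bigl(|x+z|^\beta - x^\beta\bigr)\frac{dz}{|z|^{1+\alp}}=0.$$
Specializing to $x=1$ and splitting the domain of integration into $(-1,1)$ and $(1,\infty)$, the first piece is exactly $G/2$ by the change of variable $z\mapsto -z$ (which folds $\int_{-1}^0$ onto $\int_0^1$ and recombines to produce the symmetric integrand appearing in $G$). Hence
$$-\frac{G}{2}=\int_{1}^{\infty}\frac{(1+z)^\beta - 1}{z^{1+\alp}}\,dz.$$

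In parallel, I rewrite $B(2)$ in the same variable. In $B(2)=\int_{-3}^{-2}\bigl(|1+z|^\beta-1\bigr)\frac{dz}{|z|^{1+\alp}}$ I substitute $y=-z$ so that $|1+z|=y-1$ and $|z|=y$; this yields
$$B(2)=\int_{2}^{3}\frac{(y-1)^\beta-1}{y^{1+\alp}}\,dy.$$

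With both quantities written as integrals against the same weight $z^{-(1+\alp)}\,dz$, the inequality $B(2)<-G/2$ becomes an integrand comparison:
\begin{align*}
-\frac{G}{2}-B(2)
&=\int_{1}^{2}\frac{(1+z)^\beta-1}{z^{1+\alp}}\,dz
+\int_{2}^{3}\frac{(1+z)^\beta-(z-1)^\beta}{z^{1+\alp}}\,dz
+\int_{3}^{\infty}\frac{(1+z)^\beta-1}{z^{1+\alp}}\,dz.
\end{align*}
Each of the three terms is strictly positive for $\beta\in(0,1)$: on $(1,2)$ and $(3,\infty)$ we have $(1+z)^\beta>1$, and on $(2,3)$ the monotonicity of $t\mapsto t^\beta$ together with $1+z>z-1>0$ forces $(1+z)^\beta>(z-1)^\beta$. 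This gives a quantitative $\kappa>0$ (e.g.\ the middle integral evaluated at $z=5/2$ times a small interval suffices).

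The only non-routine step is the first one, the identification $-G/2=\int_1^\infty\frac{(1+z)^\beta-1}{z^{1+\alp}}\,dz$, which rests on Lemma~\ref{alp-lem}. Care is needed because the integral on $(-1,1)$ is only conditionally convergent and must be handled as a principal value; however, this is exactly how Lemma~\ref{alp-lem} is formulated, and the symmetrization $z\mapsto -z$ converts the P.V.\ integral into the absolutely convergent $G/2$ (the singularity at $z=0$ is $|z|^{1-\alp}$, which is integrable since $\alp<2$). Once the identity is in hand, the remaining inequality is just a pointwise comparison of integrands and needs no further technicalities.
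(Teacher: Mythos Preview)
Your proof is correct and takes a genuinely different route from the paper's. The paper works in the hyperbolic-substituted variables throughout: it writes $B(2)=\int_0^{\ln 2}\frac{2\sinh(\beta x/2)}{|2\cosh(x/2)|^{1+\alp}}\,dx$ and $-G/2=\int_{\ln 2}^{\infty}\frac{2\sinh(\beta x/2)}{|2\sinh(x/2)|^{1+\alp}}\,dx$, then bounds $B(2)$ by the piece of $-G/2$ over $(\ln 2,2\ln 2)$ via a shift $x\mapsto x+\ln 2$ and the elementary inequality $\cosh(x/2)\geq\sinh((x+\ln 2)/2)$ on $(0,\ln 2)$. Your argument instead stays in the original $z$-variable: you invoke Lemma~\ref{alp-lem} at $x=1$ to identify $-G/2=\int_1^\infty\frac{(1+z)^\beta-1}{z^{1+\alp}}\,dz$ directly, rewrite $B(2)$ by the trivial substitution $y=-z$, and then compare integrands pointwise on $(1,2)\cup(2,3)\cup(3,\infty)$. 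Your approach is arguably cleaner: it reuses the key identity of Lemma~\ref{alp-lem} rather than re-deriving a hyperbolic expression for $G$, and the final comparison $(1+z)^\beta>(z-1)^\beta$ is more transparent than the $\cosh$/$\sinh$ inequality the paper needs. The paper's approach, on the other hand, keeps a uniform framework (all quantities in Appendix~B are expressed via the same $\sinh$/$\cosh$ kernel), which may streamline the surrounding computations.
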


\begin{proof}
We will show that
$$B(2)=\int_0^{\ln 2}\frac{2\sinh\frac{\beta x}2}{|2\cosh
  \frac x2|^{1+\alp}}\ dx\leq \int_0^{\ln 2}\frac{2\sinh\frac{\beta (x+\ln2)}2}{|2\sinh
  \frac {x+\ln2}2|^{1+\alp}} \ dx<\int_0^\infty(\cdots)\ dx=-\frac G2.$$
The last inequality is trivial, and since $\sinh$ is an increasing
function, the first inequaliy follows if we can show that
$$\cosh \frac x2\geq \sinh \frac {x+\ln2}2\quad\text{for all}\quad
x\in(0,\ln2).$$
But this easily follows since $f(x)=\cosh \frac x2-\sinh \frac {x+\ln2}2$
satisfy
\begin{align*}
f'(x)&=\frac 12 \sinh  \frac x2 - \frac12\cosh \frac {x+\ln2}2\leq 0
\quad\text{for all $x$},\\
f(\ln 2)&
%=\frac12\Big(\cosh \frac{\ln
%  2}2-\sinh\ln2\Big)=\frac12\Big(\sqrt{2}+\frac1{\sqrt
%  2}-2+\frac12\Big)=\frac{2+\sqrt2-4+1}4
=\frac{\sqrt 2-1}4\geq 0.
\end{align*}
\end{proof}
\begin{proof}[Proof of Proposition \ref{PropBG}]
Divide the integral $B(a)$ into three parts
$$\Big(\int_{\ln(a-1)\wedge 0}^0+ \int_{\ln (a-1)\vee 0}^{\ln
  2\wedge\ln a}+\int_{\ln 2}^{\ln 2\vee\ln a}\Big)
(\cdots)\ dx.$$
Now we conclude since the first integral is negative, the second one
is less than $-\frac G2$ by Lemma \ref{LemBG}, and the last one is less
than $-\frac G2$ by definition of $G$.
\end{proof}

%{\color{purple} I  have added some references and corrected some accents}

\end{document}